\documentclass[a4paper,11pt,oneside,notitlepage]{article}
\usepackage{times}
\usepackage{lmodern}
\usepackage[T1]{fontenc}
\usepackage[utf8]{inputenc}
\usepackage[english]{babel}
\usepackage{amssymb}
\usepackage{amsmath}
\usepackage{mathtools} 
\usepackage{amsthm}
\numberwithin{equation}{section}
\usepackage{amsfonts, mathrsfs}
\usepackage{latexsym}
\usepackage[pdftex]{color, graphicx}
\usepackage{makeidx}
\usepackage{sidecap}
\usepackage{verbatim}
 \usepackage{hyperref}
 \usepackage{orcidlink}
\usepackage{geometry}
\usepackage{subfig}
\geometry{a4paper,top=4cm,bottom=3cm,left=3.5cm,right=3cm}
\usepackage{tikz,float}
\usetikzlibrary{shapes,arrows,shadows}
\usepackage[displaymath,mathlines, running]{lineno} 
\usepackage{comment} 
\usepackage{esint} 
\usepackage{pdfsync}
\definecolor{vg}{rgb}{0.0, 0.40, 0.15}
\raggedbottom
\newtheorem{thm}{Theorem}[section]
\newtheorem{prop}[thm]{Proposition}
\newtheorem{lemma}[thm]{Lemma}

\newtheorem{remark}[thm]{Remark}


\newcommand{\media}[1]{- \hskip-.9em \int_{#1}}

\def\R{\mathbb{R}}

\def\N{\mathbb{N}}
\def\e{\varepsilon}

\def\hom{{\rm hom}}

\def\media
   {\mkern12mu\hbox{\vrule height4pt
    depth-3.2pt width5pt}
    \mkern-16.5mu\int}

 \newcommand{\mres}{\mathbin{\vrule height 1.6ex depth 0pt width
0.13ex\vrule height 0.13ex depth 0pt width 1.3ex}}

\title{Homogenization and 3D-2D dimension reduction of a functional on manifold valued BV space}
\author{Luca Lussardi \orcidlink{0000-0001-9130-3573}, Andrea Torricelli \orcidlink{0000-0002-8612-4076} and Elvira Zappale \orcidlink{0000-0001-7419-300X}}
\date{}





\AtEndDocument{\bigskip{\footnotesize%
  \textsc{L. Lussardi: Dipartimento di Scienze Matematiche G.L.\,Lagrange, Politecnico di Torino, C.so Duca degli Abruzzi 24, 10129 Torino, Italy} \par
  \textit{E-mail address}: \texttt{luca.lussardi@polito.it}
  \par
  \addvspace{\medskipamount}
  \textsc{A. Torricelli: Dipartimento di Scienze Matematiche G.L.\,Lagrange, Politecnico di Torino, C.so Duca degli Abruzzi 24, 10129 Torino, Italy} \par
  \textit{E-mail address}: \texttt{andrea.torricelli@polito.it}
  \par
  \addvspace{\medskipamount}
  \textsc{E. Zappale: Dipartimento di Scienze di base e Applicate per l'Ingegneria, Sapienza - Universit\`{a} di Roma,
via Antonio Scarpa, 16, 00161 Roma, Italy and
CIMA, Universidade de \'Evora, Portugal} \par
  \textit{E-mail address}: \texttt{elvira.zappale@uniroma1.it}
}}

\begin{document}
\maketitle


{\bf Abstract.}
We study the simultaneous homogenization and dimension reduction of an energy functional with linear growth defined on the space of manifold valued Sobolev functions. The study is carried out by $\Gamma$-convergence, providing an integral representation result in the space of manifold constrained functions with bounded variation.
\smallskip\par
\noindent 
{\bf Keywords.} Homogenization, dimensional reduction, manifold-valued spaces of functions with bounded variation, $\Gamma$-convergence, micromagnetics, linear growth.
\smallskip\par
\noindent 
{\bf Mathematics Subject Classification.} 74Q05, 49J45, 49Q20, 78A99, 26B30.

\section{Introduction and main results}

The study of variational problems constrained to manifold-valued functions has attracted increasing attention due to its relevance in various fields such as micromagnetics and liquid crystals (see for instance \cite{GJ,P,SP80, Vir}). In this paper, we investigate the simultaneous process of homogenization and dimension reduction for a class of energy functionals with linear growth, defined on the space of functions taking values in a smooth manifold without boundary. Specifically, we consider a one-parameter family of functionals defined on $W^{1,1}$ functions with values in a compact, connected manifold $\mathcal{M} \subset \mathbb{R}^3$, and study their behavior under the joint asymptotic regime of periodic homogenization and thin domain limit.

The joining of this two processes is a classical field of study, indeed the simultaneous homogenization and dimension reduction of an integral functional defined on real valued Sobolev functions has been studied in \cite{BFF} in the case $p>1$, while the case $p=1$ can be covered by the Global Method \cite{BFM}, but without explicit formulae which connect the limiting energy density with the original ones. In the application framework, regarding either the solely study of homogenization or the study of thin structures we refer to \cite{A88, AMMZ22, AT23, BZZ08, BMbv, BM, BDFV, BF01, CMMO18, CMZ25, DFK21, DG95, DG05, FF12, FFV21, MMS15, MS14}
among a wider literature, Having in mind applications in the superlinear setting we refer, for instance, to \cite{CGO24, FG23, FIOM14, KK16, LTZ1}. As far as we know, there are no rigorous derivation on dimensional reduction for functionals with linear growth defined on manifold-valued Sobolev functions.

Our analysis is carried out in the framework of $\Gamma$-convergence (see \cite{DMbook}), which provides a rigorous tool for deriving effective models and capturing the asymptotic behavior of  infimizers. A key feature of our approach is the use of blow-up techniques and relaxation arguments adapted to the manifold setting.

The main result of the paper establishes the $\Gamma$-limit of the rescaled energy functional in the space $BV(\omega; \mathcal{M})$, where $\omega \subset \mathbb{R}^2$ represents the cross-section of the thin domain.

We assume that the domain is an inhomogeneous cylinder, whose microstructure is distributed with periodicity within the material described by the small parameter $h$ comparable with the height of the domain. The equilibrium configurations are detected as minimizers of an
integral functional of the form 
\[
\int_{\omega\times (-\frac{h}{2},\frac{h}{2})} f\left ( \frac{x}{h}, \nabla u\right ) \, dx \qquad u: \omega\times \left (-\frac{h}{2},\frac{h}{2} \right ) \rightarrow \mathbb{R}^3,
\]
under suitable boundary conditions, where $\omega \subset \mathbb{R}^2$ is a bounded open set, and $f: \mathbb{R}^3 \times \mathbb{R}^{3 \times 3}\rightarrow \mathbb{R}$
is a periodic integrand with respect to the first variable, and $u$ is a manifold-valued Sobolev field, that will be specialized in the sequel. 
Due to the many applications, it is worth to recall that solely the homogenization of integral functionals depending on $x$ and $\nabla u$ and defined on manifold-valued Sobolev fields has been studied in \cite{BM} for $u\in W^{1,p}$ and in \cite{AEL07, BMbv} for $u\in W^{1,1}$, we also refer to \cite{P} for other related models.
Analogously the dimensional reduction of micromagnetic and ferromagnetic energy has been studied, in several contexts, we recall \cite{AL01, BZ07, CCG, DKMO, GH, GH2} among the others.\\
\color{black}

\noindent
Given a Carathéodory function $f:\R^3\times\R^{3\times 3}\to \R$ we consider the functional
\[
\frac{1}{h}\int_{\omega_{,h}} f \left (\frac{x}{h}, \nabla u\right ) \, dx, \qquad u \in W^{1,1}(\omega_{,h}; \mathcal{M}),
\]
with $\omega_{,h} := \omega \times \left (- \frac{h}{2}, \frac{h}{2} \right ),$ $h>0,$  $\omega \subset \mathbb{R}^2$ open and bounded, and $\mathcal{M}$ a smooth submanifold of $\mathbb{R}^3$ ithout boundary. In particular, here and in the sequel, when not stated otherwise, we always assume that $f$ has the following properties:
\begin{itemize}
    \item [(H1)] $f(\cdot, x_3, \xi)$ is 1-periodic, i.e. for every $(x_{\alpha}, x_3) \in \mathbb{R}^3$ and $\xi \in \mathbb{R}^{3 \times 3}$ it holds
    \begin{equation*}
    \label{H1}
        f(x_{\alpha} + \mathbf{e}_i, x_3, \xi) = f(x_{\alpha}, x_3, \xi), \qquad \forall i = 1,2
    \end{equation*}
    where $\{\mathbf{e}_1, \mathbf{e}_2\}$ is the canonical basis of $\mathbb{R}^2;$

    \item [(H2)] $f$ has linear growth: there exists $\alpha, \beta > 0$ such that
    \begin{equation}
    \label{H2}
        \alpha |\xi| \le \, f(x, \xi) \le \, \beta (1 + |\xi|),
    \end{equation}
    for a.e. $x\in\R^3$ and for every $\xi\in\R^{3\times 3}$;
    \item [(H3)]  $f$ is a Lipschitz continuous function with respect to the second variable uniformly in the first variable, i.e. there exists $L>0$ such that it holds
    \begin{equation*}
    \label{H3}
        |f(x, \xi) - f(x, \eta)| \le \, L \, |\xi - \eta|
    \end{equation*}
    for a.e. $x\in\mathbb{R}^3$ and $\xi,\eta\in\R^{3\times 3}$;\item[(H4)] there exist $C>0$ and $0<q<1$ such that
    \begin{equation*}
        |f(x,\xi)-f^\infty(x,\xi)|\le C(1+|\xi|^{1-q}),
    \end{equation*}
    for a.e. $x\in\R^3$ and $\xi\in\R^{3\times 3}$ and where $f^\infty:\R^3\times\R^{3\times 3}\to \R$ is the (strong) recession function of $f$ and it is defined as
    \begin{equation}
    \label{recessionfunction}
    f^\infty(x,\xi):=\limsup_{t\to+\infty}\frac{f(x,t\xi)}{t}.
    \end{equation}
    We observe that by \eqref{H2} it holds
\begin{equation}
\label{recessiongrowth}
    \alpha |\xi| \le \, f^\infty(x, \xi) \le \, \beta |\xi|,
\end{equation}
for a.e. $x\in \R^3$ and every $\xi \in \R^{3\times 3}$. 
\end{itemize}
We consider the functional $\tilde{I}^h: L^p(\omega_{,h}; \mathcal{M}) \rightarrow \overline{\mathbb{R}}$
\[
\tilde{I}^h(u) := \left \{
\begin{array}{lll}
\!\!\!\!\!\! & \displaystyle \frac{1}{h}\int_{\omega_{,h}} f \left (\frac{x}{h}, \nabla u\right ) \, dx \qquad & \textnormal{if $u \in W^{1,1}(\omega_{,h}; \mathcal{M})$}\\[4mm]
\!\!\!\!\!\! & + \infty &\textnormal{elsewhere.}
\end{array}
\right.
\]
By a change of variable, to study the $\Gamma-$limit of $\tilde{I}^h$ is equivalent to study the $\Gamma-$limit of the rescaled functional $I^h$ defined as
\begin{equation}
\label{functional}
    I^h(u) := \left \{
\begin{array}{lll}
\!\!\!\!\!\! & \displaystyle \int_{\Omega} f \left (\frac{x_{\alpha}}{h}, x_3, \nabla_h u\right ) \, dx \qquad & \textnormal{if $u \in W^{1,1}(\Omega; \mathcal{M})$}\\[4mm]
\!\!\!\!\!\! & + \infty &\textnormal{elsewhere}
\end{array}
\right.
\end{equation}
with $\Omega := \omega \times \left (- \frac{1}{2}, \frac{1}{2} \right ) = \omega_{,1}$ and $\nabla_h:=[\frac{\partial}{\partial x_1}, \frac{\partial}{\partial x_2}, \frac{1}{h}\frac{\partial}{\partial x_3}]$.
From here onward, we denote $\nabla_\alpha:=[\frac{\partial}{\partial x_1}| \frac{\partial}{\partial x_2}]$ and $\nabla_3:=\frac{\partial}{\partial x_3}$, so that $\nabla_h=[\nabla_\alpha| \frac{1}{h}\nabla_3]$. Moreover, we denote with $\xi_\alpha$ an element of $\R^{3\times 2}$ and with $\xi$ an element of $\R^{3\times 3}.$

\begin{thm}
\label{BVcase}
    Assume $\mathcal{M}$ compact and connected, and let  $f:\R^3\times\R^{3\times 3}\to \R$ be a Carath\'eodory function satisfying {\rm (H1)}, {\rm (H2)}, {\rm (H3)}, and {\rm (H4)}. Then, the $\Gamma-$limit of  $I^h$ 
    with respect to the $L^1$-topology is the functional $I$ defined as 
    \begin{equation}
        \label{BVcandidate}
        I(u):=\int_\omega Tf^0_{\rm hom}(u,\nabla_\alpha u)dx_\alpha +\int_\omega Tf^{0,\infty}_{\rm hom}\left({u}, \frac{d D^c_\alpha {u}}{d|D^c_\alpha{u}|}\right)d|D^c_\alpha{u}|+\int_{\omega\cap S_u} \theta(u^+, u^-,\nu_u)d\mathcal{H}^{1},
    \end{equation}
    if $u\in BV(\omega;\mathcal{M})$ and $I(u)=+\infty$ elsewhere, with $T f^0_{\rm hom}: \mathbb{R}^3 \times \mathbb{R}^{3 \times 2} \rightarrow \mathbb{R}$ defined as
\begin{align}
\label{Tfhom}
&Tf^0_{\rm hom}(s,\xi_\alpha) :=  \liminf_{t \rightarrow + \infty} \inf_{\varphi} \Bigg \{\frac{1}{t^2} \int_{(tQ')_{,1}} f(x_{\alpha}, x_3, \xi_\alpha + \nabla_{\alpha} \varphi | \nabla_3 \varphi) \, d x_{\alpha} d x_3: \nonumber\\
& \, \varphi \in W^{1,\infty}((tQ')_{,1}; T_s (\mathcal{M})), \,\, \varphi(x_{\alpha}, x_3) = 0\,\,\, \textnormal{for every $(x_{\alpha}, x_3) \in \partial (tQ') \times \left(-\frac{1}{2},\frac{1}{2}\right)$} \Bigg \},
\end{align}
where $Q':=\left(-\frac{1}{2},\frac{1}{2}\right)^2,$ $(tQ')_{,1}:=tQ'\times \left(-\frac{1}{2},\frac{1}{2}\right), $ and $T_s (\mathcal{M})$ denotes the tangent space to $\mathcal{M}$ in $s$. Moreover, for every $s \in \mathbb R^3$, $Tf^{0,\infty}_{\rm hom}(s,\cdot)$ is the (strong) recession function of $Tf^0_{\rm hom}(s,\cdot)$,
    i.e. for every $(s,\xi_\alpha)\in\mathbb R^3 \times \mathbb R^{3\times 2}$:
    \begin{equation}\label{ohominfty}
    Tf^{0,\infty}_{\rm hom}(s,\xi_\alpha)= (Tf^0_{\rm hom})^\infty(s,\xi_\alpha)= \limsup_{t \to +\infty} \frac{Tf^0_{\rm hom}(s,t\xi_\alpha)}{t},
    \end{equation}
    while $\theta:\mathcal{M}\times\mathcal{M}\times\mathbb{S}^1\to \mathbb R$, is defined as
    \begin{eqnarray}
        \label{jumpterm}
        & \theta(a,b,\nu):= \lim_{t\to+\infty} \inf_{\phi}\Bigg \{\displaystyle  \frac{1}{t}\int_{(tQ_{\nu})_{,1}} f^\infty(x_\alpha,x_3,\nabla \phi)dx_\alpha dx_3: \nonumber \\
        &\,\,\, \phi \in\ W^{1,1}((tQ_{\nu})_{,1}, \mathcal{M}), \phi=a\text{ on } \partial (tQ_\nu) \times \left(-\frac{1}{2},\frac{1}{2}\right)\cap \{x_\alpha\cdot\nu>0\} \text{ and }\\
        &\,\,\,\phi=b  \text{ on }  \partial (tQ_\nu)\times \left(-\frac{1}{2},\frac{1}{2}\right) \cap \{x_\alpha\cdot\nu<0\} \Bigg\}, \nonumber
    \end{eqnarray}
     where $Q_\nu$ is an open cube of $\R^2$ centered at the origin and with two of its faces parallel to $\nu$, $Q_{\nu,1}:=Q_\nu\times\left(-\frac{1}{2},\frac{1}{2}\right)$, and $\mathbb{S}^1$ is the unit sphere in $\mathbb{R}^2.$
\end{thm}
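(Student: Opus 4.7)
The plan is to follow the classical three-step $\Gamma$-convergence argument adapted to the combined homogenization/dimension-reduction/manifold-constraint setting: compactness, $\Gamma$-liminf via the Fonseca--Müller blow-up method applied to the energy measures, and $\Gamma$-limsup by explicit recovery sequences built from the three cell formulas \eqref{Tfhom}, \eqref{ohominfty} and \eqref{jumpterm}. Compactness of the smooth manifold $\mathcal{M}\subset\R^3$ grants a nearest-point projection $\pi_{\mathcal{M}}$ on a tubular neighbourhood and an exponential map at each base point; together with the Lipschitz assumption (H3) these are the tools that will convert Euclidean-valued competitors into $\mathcal{M}$- or tangent-space-valued ones at sub-leading energy cost.

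For the compactness step, any sequence with $\sup_h I^h(u_h)<\infty$ inherits, from the coercivity in (H2), a uniform $L^1$-bound on $\nabla_\alpha u_h$ and an $O(h)$ bound on $\nabla_3 u_h$; the a priori $L^\infty$-bound coming from the compactness of $\mathcal{M}$ then yields, up to a subsequence, $u_h\to u$ in $L^1(\Omega;\R^3)$ with $u$ independent of $x_3$, $\mathcal{M}$-valued (the constraint passes to the $L^1$-limit since $\mathcal{M}$ is closed) and $u\in BV(\omega;\mathcal{M})$.

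For the liminf inequality I associate to a recovery-type sequence the energy measure $\mu_h:=f(x_\alpha/h,x_3,\nabla_h u_h)\,\mathcal{L}^3\mres\Omega$, extract a weak-$*$ limit $\mu$, and decompose its projection on $\omega$ with respect to $\mathcal{L}^2\mres\omega$, $|D^c_\alpha u|$ and $\mathcal{H}^1\mres S_u$. On a set of full measure in each stratum I perform the appropriate blow-up. In the absolutely continuous part, rescaling at rate $r\to 0$ around a Lebesgue point $x_\alpha^0$ and composing with a smooth retraction onto a tubular neighbourhood of $\mathcal{M}$ followed by the orthogonal projection onto $T_{u(x_\alpha^0)}\mathcal{M}$ produces admissible competitors for \eqref{Tfhom}. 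In the Cantor part, rescaling along the polar decomposition of $D^c_\alpha u$ together with the estimate in (H4) allows the replacement of $f$ by $f^\infty$ with sublinear error, giving competitors for \eqref{ohominfty}. On the jump set, an anisotropic rescaling around the jump axis yields $\mathcal{M}$-valued competitors for \eqref{jumpterm}, where no linearisation is needed because the cell problem itself is manifold-constrained.

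For the limsup inequality I first approximate $u\in BV(\omega;\mathcal{M})$ by a piecewise smooth sequence $u_n$, for which each of the three terms of $I(u_n)$ is explicit, using the density results underlying \cite{BMbv}. On a fine partition of $\omega$ adapted to the three parts of $I(u_n)$ I insert almost-optimal competitors for the cell problems \eqref{Tfhom}, \eqref{ohominfty}, \eqref{jumpterm}; the tangent-space-valued ones are promoted to $\mathcal{M}$-valued maps via $\exp_{u(x_\alpha)}$, the quadratic error of the exponential being absorbed thanks to (H3). Local pieces are glued with cut-offs and finally composed with $\pi_{\mathcal{M}}$, which is $1$-Lipschitz in radial directions and distorts gradients by $1+O(\mathrm{dist}(\cdot,\mathcal{M}))$; a standard diagonal argument on $h$, on the approximation index $n$, and on the cell scale delivers the recovery sequence. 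The main technical obstacle is the interaction, in the Cantor blow-up, between the manifold constraint, the linear growth and the joint oscillation of the two rates $x_\alpha/h$ and $h^{-1}\nabla_3$: disentangling the fast scale $h$ from the blow-up scale $r$ while keeping all competitors inside a tubular neighbourhood of $\mathcal{M}$ concentrates most of the work, and it is precisely the point at which hypothesis (H4) is essential to pass from $f$ to $f^\infty$.
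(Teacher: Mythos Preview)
Your overall strategy---blow-up for the liminf and explicit recovery for the limsup---matches the paper's, but the implementation differs in several places worth noting.

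The paper introduces an auxiliary integrand $g(y,s,\xi)=f(y,\mathbb P_s\xi)+|\xi-\mathbb P_s\xi|$ defined on \emph{all} of $\R^3\times\R^3\times\R^{3\times 3}$, which agrees with $f$ whenever $\xi\in[T_s\mathcal M]^3$; this unconstrained integrand lets them run the bulk blow-up with $\R^3$-valued (in fact Lipschitz, after a truncation via \cite{FM}) competitors and only at the very end identify $g^0_{\rm hom}(s,\xi_\alpha)=Tf^0_{\rm hom}(s,\xi_\alpha)$ on tangent vectors. Your route via direct orthogonal projection onto $T_{u(x_0)}\mathcal M$ is morally the same device, but the $g$-formulation buys immediate access to the unconstrained homogenization/dimension-reduction machinery of \cite{BFF,BFM} without further manifold bookkeeping.

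The most substantial difference is the Cantor part of the liminf. The paper does \emph{not} blow up along $|D^c_\alpha u|$; instead, once the bulk inequality $I^0(u)\ge\int_\omega g^0_{\rm hom}(u,\nabla_\alpha u)\,dx_\alpha$ is in hand for every $u\in BV(\omega;\mathcal M)$, it passes to the $BV$-relaxed envelope of the right-hand side and invokes the Fonseca--M\"uller relaxation theorem \cite[Theorem~2.16]{FM93} (after a coercivity perturbation $\tilde g_\varepsilon=g^0_{\rm hom}+\varepsilon|\xi_\alpha|$) to obtain the Cantor term for free. Your direct Cantor blow-up is in principle feasible but is precisely the step the literature (\cite{BMbv,AEL07}) systematically sidesteps, and your own closing paragraph already flags it as the hardest point; the relaxation trick removes that obstacle entirely.

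For the limsup the paper is also more indirect than you propose: the diffuse part is inherited from the Sobolev case established in \cite{LTZ1} combined with the manifold-$BV$ relaxation theorem \cite[Theorem~7.1]{BMbv}, while the jump part is handled by first securing an abstract integral representation $I^0(u,S_u\cap A)=\int K(u^+,u^-,\nu_u)\,d\mathcal H^1$ via the Ambrosio--Braides theorem \cite{AB} and then proving $K\le\theta$ by a single explicit oscillating construction, with the extension from $BV(\omega;T)$ ($T$ finite) to $BV(\omega;\mathcal M)$ done through the approximation arguments of \cite{AMT}. Your direct recovery via exponential maps and $\pi_{\mathcal M}$ would work but requires substantially more hands-on gluing; the paper trades that labour for the black boxes \cite{AB,BMbv,LTZ1,AMT}.
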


\section{Notation and preliminaries}\label{NP}
This section is devoted to fix notation, recall previous results that will be useful in the sequel and establish properties of the energy densities appearing in the main results. In what follows $\Omega:=\omega\times (-\frac{1}{2}, \frac{1}{2})$, $\omega \subset \mathbb{R}^2$ is open and bounded, and $\mathcal{M}$ is a smooth submanifold of $\R^3$ without boundary, further assumptions on $\mathcal{M}$ will be stated explicitly if needed. Given $s\in\mathcal{M}$, we write $T_s(\mathcal{M})$ for the tangent space to $\mathcal{M}$ in $s$. Given $a, b \in \mathcal{M},$ we introduce the family of geodesic curves between $a$ and $b$ by
\begin{align}
\label{geodesic}
& \mathcal{G}(a,b) :=  \Big\{ \gamma \in \mathcal{C}^{\infty}(\mathbb{R}; \mathcal{M}): \gamma(t) = a, \,\,\, {\rm if} \,\,\,t \ge 1/2, \nonumber\\
&  \gamma(t) = b, \,\,\, {\rm if} \,\,\,t \le -1/2, \,\,\, \int_{\mathbb{R}} |\dot{\gamma}| \, dt = {\bf d}_{\mathcal{M}}(a,b) \Big \},
\end{align}
where ${\bf d}_{\mathcal{M}}$ denotes the geodesic distance on $\mathcal{M}.$
\\
We denote by $\mathcal{A}(\Omega)$ the family of all open subsets of $\Omega$ and with $\mathcal{A}(\omega)$ the family of all open subset of $\omega$. We write $B^k(s,r)$ for the closed ball in $\R^k$, $k\in\N$, of center $s\in\R^k$ and radius $r>0$. Moreover, we denote by $Q$ the cube $(-\frac{1}{2}, \frac{1}{2})^3$ and with $Q(x_0,\rho)$ the rescaled and translated cube $x_0+\rho Q$, with $x_0\in\R^3$, $\rho>0$. In a similar way, given $\nu \in \mathbb{S}^1,$ $Q_{\nu}$ stands for the open unit square in $\mathbb{R}^2$ centered at the origin, with the two sides orthogonal to $\nu;$ we set $Q_{\nu}(x_0, \rho) := x_0 + \rho Q_{\nu}.$ 
\\
Furthermore, given a set $A\subset\omega$ we denote by $A_{,h}$, with $h>0$, the set $A\times (-\frac{h}{2}, \frac{h}{2})$ and so $A_{,1}:=A\times(-\frac{1}{2}, \frac{1}{2})$. It follows that, in particular, $\Omega=\omega_{,1}$. We also denote by $Q'$ the cube $(-\frac{1}{2}, \frac{1}{2})^2$, so $Q'_{,h}=(-\frac{1}{2}, \frac{1}{2})^2\times (-\frac{h}{2}, \frac{h}{2})$ for $h>0$, while $Q'_{,1}=(-\frac{1}{2}, \frac{1}{2})^3=Q$ and $Q_{\nu,1} = Q_{\nu} \times (-\frac{1}{2}, \frac{1}{2}).$ $\mathcal{M}(\Omega)$ is the space of real valued Radon measure in $\Omega$ with finite total variation,  $\mathcal{L}^k$, is the $k$-dimensional Lebesgue measure, with $k\in\N$. Finally, given $\lambda,\mu\in\mathcal{M}(\Omega)$ we denote by $\frac{d\lambda}{d\mu}$ the Radon-Nykod\'ym derivative of $\lambda$ with respect to $\mu$. By a generalization of Besicovitch Differentiation Theorem, see for instance \cite[Proposition 2.2]{Ambrosio-Dal Maso}, there exists a Borel set $E$ such that $\mu(E)=0$ and $\frac{d\lambda}{d\mu}(x)=0$ for every $x\in supp\, \lambda \setminus E.$\\

\noindent
Given $s\in\mathcal{M}$, we consider the orthogonal projection \begin{equation}\label{orthP}P_s: \R^3\to T_s(\mathcal{M}),
\end{equation} and we define the function 
$\mathbf{P}_s: \R^{3\times 3}\to [T_s(\mathcal{M})]^3$ as
\begin{equation}\label{3dorthP}
\mathbf P_s(\xi):=\left[P_s(\xi_1)| P_s(\xi_2)| P_s(\xi_3)\right],
\end{equation}
for every $\xi=[\xi_1|\xi_2|\xi_3]\in\R^{3\times 3}.$ For a Carathéodory function $f:\mathbb R^3 \times \mathbb R^{3 \times 3}\to \mathbb R$, we set
\begin{equation}
\label{perturbedf}
    \Bar{f}(x,s,\xi):=f(x,\mathbf{P}_s(\xi))+|\xi-\mathbf{P}_s(\xi)|^p.
\end{equation}
This function will play a crucial role in our subsequent analysis, since it will appear in the formulas to detect our limiting energy densities.
 By construction the function $\Bar{f}:\R^3\times\mathcal M\times\R^{3\times 3}\to\R$ is Carathéodory, i.e. it is measurable with respect to the first variable and continuous with respect to the last two variables. Moreover, if conditions (H1) and (H2) are satisfied then $\Bar{f}$ also satisfies (H1) and (H2), i.e. $\Bar{f}$ is $1$-periodic in the first variable and there exists $C>0$ such that
 \begin{equation}\label{pgrowth}
 \frac{1}{C}|\xi|\le\Bar{f}(x,s,\xi)\le C(1+|\xi|),
 \end{equation}
for every $(s,\xi)\in \mathcal M\times\R^{3\times 3}$ and for a.e. $x\in\R^3$. Now we extend the function $\bar{f}$ in \eqref{perturbedf} to the whole space $\mathbb{R}^3 \times \mathbb{R}^3 \times \mathbb{R}^{3\times 3}$. This can be done as in \cite[Lemma 3.1]{BMbv}. 
For $\delta_0 > 0$ fixed, let $\mathcal U :=\{
s \in \mathbb R^3 : {\rm dist}(s,\mathcal M) < \delta_0\}$ be the $\delta_0$-neighborhood of $\mathcal M$. Choosing $\delta_0 > 0$
small enough, we may assume that the nearest point projection $\Pi : \mathcal U\to \mathcal M$ is a well defined Lipschitz mapping.
Then the map $s \in \mathcal U \mapsto P_{\Pi(s)}$ is Lipschitz, with $P_{\Pi(s)}$ as in \eqref{orthP}. We denote by $\chi$ a cut-off function in $C^\infty_c (\mathbb R^3; [0, 1])$ such that
$\chi(t) = 1$ if ${\rm dist}(t,\mathcal M) \leq \delta_0/2$, and $\chi (t) = 0$ if ${\rm dist}(t,\mathcal M) \geq \frac{3 \delta_0}{4}$. We define
$\mathbb P_s(\xi) := \chi(s){\bf P}_{\Pi(s)}(\xi)$ for $(s, \xi) \in \mathbb R^3 \times \mathbb R^{3 \times 3},$ with ${\bf P}_{\Pi(s)}$ as in \eqref{3dorthP}.
In the sequel we will consider the integrand $g : \mathbb R^3 \times \mathbb R^3 \times \mathbb R^{3\times 3} \to [0, +\infty)$ given by
\begin{align}\label{g}
g(y, s, \xi) = f(y, \mathbb P_s(\xi)) + |\xi -\mathbb P_s(\xi)|.
\end{align}

\noindent
We also recall an important property of manifold-valued Sobolev functions. The following Theorem has been proved in \cite{Bet91, BZ88} and we will use it to prove Theorem \ref{BVcase}.
\begin{thm}
\label{label}
Let $\Omega\subset\R^3$ be open and bounded, let $\mathcal{S}$ be the family of all finite unions of $1$-dimensional manifolds in $\R^3$ and let $\pi_1(\mathcal{M})$ be the fundamental group of $\mathcal{M}$. Denote by $\mathcal{D}(\Omega, \mathcal{M})\subset W^{1,1}(\Omega, \mathcal{M})$ the set
    \begin{equation*}
        \mathcal{D}(\Omega, \mathcal{M}):=
        \begin{cases}
             W^{1,1}(\Omega, \mathcal{M})\cap C^\infty(\Omega, \mathcal{M}) \quad \text{if } \pi_1(\mathcal{M})=0\\
            \{u\in W^{1,1}(\Omega, \mathcal{M})\cap C^\infty(\Omega\setminus\Sigma, \mathcal{M}) \text{ for some } \Sigma\in \mathcal{S}\} \quad \text{otherwise}
        \end{cases}
    \end{equation*}
    Then $\mathcal{D}(\Omega, \mathcal{M})$ is dense in $ W^{1,1}(\Omega, \mathcal{M})$ with respect to the strong topology of $ W^{1,1}(\Omega, \R^3)$.
\end{thm}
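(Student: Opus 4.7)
The plan is to prove the $\Gamma$-convergence result by the classical two-step procedure---$\Gamma$-liminf and $\Gamma$-limsup inequalities---combined with a localization argument that yields the integral representation on $BV(\omega;\mathcal{M})$. Compactness comes first: hypothesis (H2) gives $\|\nabla_h u_h\|_{L^1}\le C$ whenever $\sup_h I^h(u_h)<+\infty$, while compactness of $\mathcal{M}$ forces $\|u_h\|_{L^\infty}\le C$; a standard slicing in $x_3$ then produces a limit $u\in BV(\omega;\mathcal{M})$ with $u_h\to u$ in $L^1$ and $|Du|(\omega)\le C$.

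Next I would localize: for every $A\in\mathcal{A}(\omega)$ set $I^h(u,A):=\int_{A_{,1}} f(x_\alpha/h,x_3,\nabla_h u)\,dx$ and introduce the upper/lower $\Gamma$-limits $\overline{I}(u,A)$, $\underline{I}(u,A)$. The target is the global integral-representation method of Bouchitt\'e--Fonseca--Mascarenhas, suitably adapted to the manifold-constrained $BV$ setting along the lines of \cite{BMbv}, which requires checking the standard set-function properties (locality, nested subadditivity, inner regularity, $L^1$-lower semicontinuity in $u$) together with the two-sided bound $\alpha|Du|(A)\le\underline{I}(u,A)\le\overline{I}(u,A)\le\beta(|A|+|Du|(A))$ inherited from (H2). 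Once available, the representation exhibits the $\Gamma$-limit as a Radon measure absolutely continuous with respect to $\mathcal{L}^2+|D^s u|$, and it remains to identify its three Radon--Nikod\'ym derivatives.

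This identification is carried out by a blow-up argument \`a la Fonseca--M\"uller, applied separately at Lebesgue, Cantor, and jump points of $Du$ via Alberti's decomposition. At a Lebesgue point $x_0$ of $\nabla_\alpha u$, blow-up at scale $t\to+\infty$ reduces to a cell problem on the thin cylinder $(tQ')_{,1}$ with boundary-affine competitors; since $u_h(x_0+\cdot/t)\to u(x_0)=:s$, the corrections become tangent to $\mathcal{M}$ at $s$, and the infimum equals $Tf^0_{\rm hom}(s,\nabla_\alpha u(x_0))$ from \eqref{Tfhom}. At a Cantor point the gradient scales faster than the measure, and (H4) lets one replace $f$ by $f^\infty$ uniformly, producing $Tf^{0,\infty}_{\rm hom}$ via \eqref{ohominfty}. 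At a jump point the blow-up reproduces \eqref{jumpterm}, with the one-dimensional transition realised through geodesics from the class $\mathcal{G}(u^+,u^-)$ in \eqref{geodesic}. For the matching upper bound I would invoke Theorem \ref{label} to approximate $u$ by maps smooth off a finite union of one-dimensional manifolds, splice in periodic correctors on scale $h$ coming from \eqref{Tfhom}, handle $S_u$ by geodesic transitions, and extract a diagonal recovery sequence via Attouch's lemma.

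Throughout, the manifold constraint is accommodated by the auxiliary integrand $g$ in \eqref{g}, which extends $\bar{f}$ to $\mathbb{R}^3\times\mathbb{R}^3\times\mathbb{R}^{3\times 3}$ while preserving periodicity, Lipschitz regularity and linear growth \eqref{pgrowth}; working with $g$ allows one to apply unconstrained results, and the penalty $|\xi-\mathbb{P}_s(\xi)|$ ensures that near-optimal sequences have asymptotically tangent-valued gradients, so the nearest-point projection $\Pi$ returns them to $\mathcal{M}$ at negligible energy cost. The main obstacles I anticipate are threefold: first, reconciling the tangent-space constraint of \eqref{Tfhom} with the unconstrained blow-up limit, which relies on the continuity of $s\mapsto P_s$ and on the strong $L^1$-convergence of $u_h$; second, proving identity \eqref{ohominfty}, where (H4) is essential to control $|f-f^\infty|$ uniformly as both the homogenization scale and the blow-up scale diverge; and third, the genuinely coupled nature of homogenization and dimension reduction, which forces cell problems on fixed-height cylinders $(tQ')_{,1}$ rather than on cubes and demands careful boundary-matching between the $\nabla_h$ rescaling and the periodic correctors of scale $h$.
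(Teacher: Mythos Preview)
Your proposal does not address the stated theorem at all. Theorem~\ref{label} is a density result for manifold-valued Sobolev maps: it asserts that every $u\in W^{1,1}(\Omega;\mathcal{M})$ can be approximated in the strong $W^{1,1}$-topology by maps that are smooth (if $\pi_1(\mathcal{M})=0$) or smooth away from a finite union of one-dimensional submanifolds (otherwise). What you have written is instead a sketch of the proof of Theorem~\ref{BVcase}, the main $\Gamma$-convergence result of the paper; you even invoke Theorem~\ref{label} as an ingredient in your recovery-sequence construction, which makes clear that you have mistaken which statement you were asked to prove.

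The paper itself does not prove Theorem~\ref{label}: it is quoted from the literature (Bethuel~\cite{Bet91} and Bethuel--Zheng~\cite{BZ88}) and used as a black box. A genuine proof has nothing to do with $\Gamma$-convergence, blow-up, or homogenization. It proceeds by mollifying $u$ to obtain smooth $\mathbb{R}^3$-valued approximations, then projecting back onto $\mathcal{M}$ via the nearest-point retraction; the difficulty is that the mollified maps may fail to take values in a tubular neighbourhood of $\mathcal{M}$, and the set where this happens can carry topological obstructions governed by $\pi_1(\mathcal{M})$. When $\pi_1(\mathcal{M})=0$ these obstructions vanish and one obtains globally smooth approximants; when $\pi_1(\mathcal{M})\neq 0$ a codimension-two singular set $\Sigma$ is in general unavoidable. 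None of this is touched by your outline.
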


\begin{remark}
\label{abuso}
    Given $A\subset\mathcal{A}(\omega)$, the set
\[
V(A) := \left \{v \in W^{1,p}(A_{,1}; \mathcal{M}): \frac{\partial v}{\partial x_3} = 0 \,\,\, \text{a.e. on } A_{,1} \right \},
\]
is isomorphic to the Sobolev space $W^{1,p}(A; \mathcal{M}).$  
\end{remark}

\subsection{Bulk part density}
Now we characterize the density of the bulk part of the $\Gamma$-limit. First, we recall that the extended function $g$ defined in \eqref{g} has the following properties.

\begin{lemma}{\cite[Lemma 3.1]{BMbv}}
\label{extension}
Assume that $\mathcal{M}$ is compact. Let $f: \mathbb{R}^3 \times \mathbb{R}^{3 \times 3} \rightarrow [0, + \infty)$ be a Carath\'eodory function satisfying {\rm (H1)}, {\rm (H2)}, and {\rm (H3)}. Then the function $g: \mathbb{R}^3 \times  \mathbb{R}^3 \times \mathbb{R}^{3 \times 3} \rightarrow [0, + \infty)$ in \eqref{g}, turns out to be a Carath\'eodory function such that
\begin{equation}
\label{(2.6)}
g(y, s, \xi) = f(y, \xi),\quad g^\infty(y,s,\xi)=f^\infty(y,\xi),
\end{equation}
for a.e. $y\in\R^3$, for every $s \in \mathcal{M}$ and $\xi \in [T_s(\mathcal{M})]^3$.
Furthermore it satisfies
\\
{\rm (i)} $g(y_\alpha,y_3, s, \xi)$ is 1-periodic in the first variable, with $(y_\alpha, y_3)\in \R^3$;
\\
{\rm (ii)} there exist $0 < \alpha' \le \beta'$ such that
\begin{equation}
\label{(2.7)}
\alpha' |\xi| \le \, g(y, s, \xi) \le \, \beta' (1 + |\xi|),
\end{equation}
for a.e. $y \in \mathbb{R}^3$ and for every $(s, \xi) \in \mathbb{R}^3 \times \mathbb{R}^{3 \times 3}$;
\\
{\rm (iii)} there exist constants $C > 0$ and $C' > 0$ such that
\begin{equation}
\label{(2.8)}
|g(y, s, \xi) - g(y, s', \xi)| \le \, C \, |s - s'| \, |\xi|,
\end{equation}
and
\begin{equation}
\label{roba}
    |g(y, s, \xi) - g(y, s, \xi')| \le \, C' \, |\xi - \xi'|,
\end{equation}
for a.e. $y \in \mathbb{R}^3$,  for every $s, s' \in \mathbb{R}^3,$ and for every $\xi \in \mathbb{R}^{3 \times 3}$;
\\
{\rm (iv)} if {\rm (H4)} holds, there exists $0<q<1$ and $C''$ such that
\begin{equation}
\label{BMbv(3.6)}
    |g(y,s, \xi)-g^\infty(y,s,\xi)|\le C''(1+|\xi|^{1-q}),
\end{equation}
for a.e. $y\in\R^3,$ for every $(s,\xi)\in\R^3\times\R^{3\times 3},$ and  where $g^\infty:\R^3\times\R^{3\times 3}\to \R$ is the (strong) recession function of $g$ and it is defined as
\[
g^{\infty}(y, s,\xi):=\limsup_{t\to+\infty}\frac{g(y,s, t\xi)}{t}.
  \]
\end{lemma}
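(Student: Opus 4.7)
The plan is to verify each stated property of $g$ directly from its explicit definition $g(y,s,\xi) = f(y,\mathbb{P}_s(\xi)) + |\xi-\mathbb{P}_s(\xi)|$, exploiting the two structural facts already recorded in the paper: the map $s\mapsto P_{\Pi(s)}$ is Lipschitz on $\mathcal U$, and $\chi$ is a smooth cut-off with $\chi\equiv 1$ in a neighborhood of $\mathcal M$. First, Carath\'eodory character is immediate: measurability in $y$ comes from $f$, while continuity in $(s,\xi)$ follows because $\mathbb{P}_s(\xi)$ depends continuously on $s$ (as the product of $\chi(s)$ with the continuous $\mathbf{P}_{\Pi(s)}$) and linearly on $\xi$, and $f(y,\cdot)$ is continuous by (H3). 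Property (i) is inherited verbatim from the periodicity of $f$, since $g$ depends on $y$ only through $f(y,\cdot)$.

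For \eqref{(2.6)}, if $s\in\mathcal M$ then $\chi(s)=1$ and $\Pi(s)=s$, so $\mathbb{P}_s=\mathbf{P}_s$; if moreover $\xi\in[T_s(\mathcal M)]^3$ then $\mathbf{P}_s(\xi)=\xi$, whence $|\xi-\mathbb{P}_s(\xi)|=0$ and $g(y,s,\xi)=f(y,\xi)$. Passing to the recession function, the same substitution gives $g^\infty(y,s,\xi)=f^\infty(y,\mathbb{P}_s(\xi))+|\xi-\mathbb{P}_s(\xi)|$ (by linearity of $\mathbb{P}_s$ in $\xi$ and $1$-homogeneity of the norm term), which for $s\in\mathcal M$, $\xi\in[T_s(\mathcal M)]^3$ reduces to $f^\infty(y,\xi)$.

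For the two-sided bound (ii), note that $|\mathbb{P}_s(\xi)|\le |\xi|$ since $\chi\le 1$ and each $P_{\Pi(s)}$ is an orthogonal projection. Then (H2) yields $g(y,s,\xi)\le \beta(1+|\mathbb{P}_s(\xi)|)+2|\xi|\le \beta'(1+|\xi|)$. Conversely, writing $|\xi|\le |\mathbb{P}_s(\xi)|+|\xi-\mathbb{P}_s(\xi)|$, the lower estimate from (H2) on the first summand gives $g(y,s,\xi)\ge \min(\alpha,1)|\xi|$. For (iii), the key estimate is the Lipschitz bound
\[
|\mathbb{P}_s(\xi)-\mathbb{P}_{s'}(\xi)|\le \bigl(|\chi(s)-\chi(s')|+|\chi(s')|\,\|\mathbf{P}_{\Pi(s)}-\mathbf{P}_{\Pi(s')}\|\bigr)|\xi|\le C|s-s'|\,|\xi|,
\]
obtained by adding and subtracting $\chi(s')\mathbf{P}_{\Pi(s)}(\xi)$ and using the Lipschitz continuity of both $\chi$ and $s\mapsto P_{\Pi(s)}$. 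Combined with (H3) on $f$ and the reverse triangle inequality on the norm term, this yields \eqref{(2.8)}. The Lipschitz bound \eqref{roba} in $\xi$ is even more direct: $\mathbb{P}_s$ is linear in $\xi$ with operator norm $\le 1$, so (H3) controls the first summand and a triangle inequality the second.

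For (iv), the explicit formula $g^\infty(y,s,\xi)=f^\infty(y,\mathbb{P}_s(\xi))+|\xi-\mathbb{P}_s(\xi)|$ collapses $|g-g^\infty|$ to $|f(y,\mathbb{P}_s(\xi))-f^\infty(y,\mathbb{P}_s(\xi))|$; applying (H4) at the point $\mathbb{P}_s(\xi)$ and using $|\mathbb{P}_s(\xi)|^{1-q}\le |\xi|^{1-q}$ delivers \eqref{BMbv(3.6)}. The only real bookkeeping obstacle is handling the transition region $\{\delta_0/2<\mathrm{dist}(\cdot,\mathcal M)<3\delta_0/4\}$ where $\chi$ is neither $0$ nor $1$: here one must be careful that all constants $C$, $C'$, $C''$ depend only on $\mathrm{Lip}(\chi)$, $\mathrm{Lip}(\Pi)$, the Lipschitz constant of $s\mapsto P_s$ on $\mathcal M$, and the constants $\alpha,\beta,L$ of $f$, but they do not blow up with $s$ because $\mathbb{P}_s\equiv 0$ outside a fixed compact neighborhood of $\mathcal M$.
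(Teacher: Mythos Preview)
Your proof is correct. The paper does not supply its own argument for this lemma; it is stated with a direct citation to \cite[Lemma~3.1]{BMbv}, so there is no in-paper proof to compare against. Your direct verification from the explicit formula $g(y,s,\xi)=f(y,\mathbb{P}_s(\xi))+|\xi-\mathbb{P}_s(\xi)|$ is exactly the natural route, and matches in spirit what the cited reference does. One small remark on the Lipschitz-in-$s$ estimate: your add-and-subtract decomposition implicitly assumes both $\Pi(s)$ and $\Pi(s')$ are defined, i.e.\ $s,s'\in\mathcal U$; when one of them lies outside $\operatorname{supp}\chi$ you should instead bound $|\mathbb{P}_s(\xi)-0|=|\chi(s)-\chi(s')|\,|\mathbf{P}_{\Pi(s)}(\xi)|\le \mathrm{Lip}(\chi)\,|s-s'|\,|\xi|$ directly. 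You already flag this bookkeeping point at the end, so the argument is complete.
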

Now we prove some properties of the bulk part $Tf_{hom}^0$ (see \eqref{Tfhom}) of $I$ (see \eqref{BVcandidate}).

\begin{prop}
    \label{characterization}
   Given a Carathéodory function $f:\R^3\times\R^{3\times 3}\to\R$ satisfying conditions {\rm (H1)} and {\rm (H2)}, let $g:\mathbb{R}^3 \times  \mathbb{R}^3 \times \mathbb{R}^{3 \times 3} \rightarrow [0, + \infty)$ be the functions defined as in \eqref{g}, then following properties hold.
    \begin{itemize}
        \item[{\rm (i)}] For every $s\in\mathcal{M}$ and $\xi_\alpha\in[T_s(\mathcal{M})]^2$
        \begin{equation}
        \label{(2.3)}
        Tf_{\rm hom}^0(s, \xi_\alpha)=g_{\rm hom}^0(s,\xi_\alpha),
        \end{equation}
        where $Tf^0_{\rm hom}$ is defined as in \eqref{Tfhom} and
        \begin{align}
            &g^0_{\rm hom}(s,\xi_\alpha) =  \lim_{t \rightarrow + \infty} \inf_{\varphi} \Bigg \{\frac{1}{t^2} \int_{(tQ')_{,1}} g(x_{\alpha}, x_3,s, \xi_\alpha + \nabla_{\alpha} \varphi | \nabla_3 \varphi) \, d x_{\alpha} d x_3: \label{f0hom}\\
& \, \varphi \in W^{1,\infty}((tQ')_{,1}; \R^3), \,\, \varphi(x_{\alpha}, x_3) = 0\,\,\, \textnormal{for every $(x_{\alpha}, x_3) \in \partial (Q') \times \left(-\frac{1}{2},\frac{1}{2}\right)$} \Bigg \} \nonumber
        \end{align}
        
        \item[{\rm (ii)}] The function $Tf_{\rm hom}^0$ is tangentially quasiconvex in the second variable, i.e.
        \[
        Tf_{\rm hom}^0(s,\xi_\alpha)\le\int_{Q'} Tf_{\rm hom}^0(s, \xi_\alpha+ \nabla_\alpha \psi)dx_\alpha,
        \]
        for every $s\in\mathcal{M}, \xi_\alpha\in [T_s(\mathcal{M})]^2$, and $\psi\in W^{1,\infty}_0(Q'; T_s(\mathcal{M}))$. In particular, $Tf_{\rm hom}^0(s, \cdot)$ is rank one convex.
        \item[{\rm (iii)}] $Tf_{\rm hom}^0$ is uniformly $1$-coercive and has uniform linear growth in the second variable (i.e. it satisfies inequalities as in \eqref{pgrowth}, uniformly with respect to $s$). Moreover, there exists $C>0$ such that for every $s\in\mathcal{M}$ and $\xi_\alpha, \xi_\alpha'\in [T_s(\mathcal{M})]^2$  
        \begin{equation*}
        |Tf_{\rm hom}^0(s, \xi_\alpha) - Tf_{\rm hom}^0(s, \xi_\alpha')|\le C|\xi_\alpha-\xi_\alpha'|.
        \end{equation*}

        \item[{\rm (iv)}] Finally, if hypothesis (H4) is also satisfied, then \begin{equation*}
            |Tf^0_{\rm hom}(s,\xi_\alpha)-Tf^{0,\infty}_{\rm hom}(s,\xi_\alpha)|\le C(1+|\xi_\alpha|^{1-q}),
        \end{equation*} 
        for some positive constant $C$ and for every $s\in\mathcal{M}$ and $\xi_\alpha \in [T_s(\mathcal{M})]^2$, where $Tf^{0,\infty}_{\rm hom}$ is the function appearing in \eqref{ohominfty}.
    \end{itemize}
\end{prop}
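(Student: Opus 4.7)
The plan is to establish (i) first, since it identifies the manifold-constrained cell density $Tf^0_{\rm hom}$ with the real-valued one $g^0_{\rm hom}$ attached to the extended integrand $g$; the remaining properties (ii)--(iv) can then be imported from the corresponding properties of $g$ listed in Lemma \ref{extension}.

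For (i), the starting point is that $s\in\mathcal M$ implies $\chi(s)=1$, so $\mathbb P_s=\mathbf P_s$, and that $\mathbf P_s$ acts as the identity on matrices whose columns lie in $T_s(\mathcal M)$. Consequently, whenever $\xi_\alpha\in [T_s(\mathcal M)]^2$ and $\varphi$ is admissible for \eqref{Tfhom}, both integrands coincide and $\varphi$ is also admissible in \eqref{f0hom}, which yields $g^0_{\rm hom}(s,\xi_\alpha)\le Tf^0_{\rm hom}(s,\xi_\alpha)$. For the reverse inequality, given any $\R^3$-valued admissible $\varphi$ for $g^0_{\rm hom}$, I will set $\tilde\varphi:=P_s\circ\varphi$: it inherits Lipschitz regularity and the zero lateral trace and is $T_s(\mathcal M)$-valued; moreover, linearity of $P_s$ together with $\xi_\alpha\in[T_s\mathcal M]^2$ gives $\mathbb P_s(\xi_\alpha+\nabla_\alpha\varphi|\nabla_3\varphi)=(\xi_\alpha+\nabla_\alpha\tilde\varphi|\nabla_3\tilde\varphi)$, and combined with the nonnegativity of the penalty $|\xi-\mathbb P_s(\xi)|$ this produces the pointwise inequality $f(x,\xi_\alpha+\nabla_\alpha\tilde\varphi|\nabla_3\tilde\varphi)\le g(x,s,\xi_\alpha+\nabla_\alpha\varphi|\nabla_3\varphi)$, whence $Tf^0_{\rm hom}(s,\xi_\alpha)\le g^0_{\rm hom}(s,\xi_\alpha)$. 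The existence of the limit in \eqref{f0hom} (which upgrades the $\liminf$ in \eqref{Tfhom} to a genuine limit) is the standard subadditivity-in-$t$ argument.

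Property (iii) is then a direct corollary of (i) and Lemma \ref{extension}\,(ii)--(iii): the upper bound comes from testing \eqref{f0hom} with $\varphi\equiv 0$ and using \eqref{(2.7)}; the lower bound from $\frac{1}{t^2}\int g\ge\frac{\alpha'}{t^2}\int|\xi_\alpha+\nabla_\alpha\varphi|\,dx\ge\alpha'|\xi_\alpha|$ via the triangle inequality and $\int_{(tQ')_{,1}}\nabla_\alpha\varphi\,dx=0$ (from the zero lateral trace); and the Lipschitz estimate in $\xi_\alpha$ from \eqref{roba} applied to a common near-optimal competitor shared by $\xi_\alpha$ and $\xi_\alpha'$. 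For (ii), given $\psi\in W^{1,\infty}_0(Q';T_s(\mathcal M))$, I will approximate $\nabla_\alpha\psi$ by a piecewise constant map $A_j$ on a fine partition of $Q'$ into squares of side $1/n$, insert on each cell a rescaled near-optimal competitor for $Tf^0_{\rm hom}(s,\xi_\alpha+A_j)$ added to $\psi$ (extended trivially in $x_3$), and control the mismatch at cell interfaces by the Lipschitz bound from (iii). Sending the mesh size to zero and then $t$ to infinity yields the quasiconvexity inequality; rank-one convexity on $[T_s\mathcal M]^2$ is then a standard consequence.

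For (iv), I will use Lemma \ref{extension}\,(iv), which gives $|g(y,s,\xi)-g^\infty(y,s,\xi)|\le C''(1+|\xi|^{1-q})$. Plugging a common near-optimal test $\varphi$ into the cell formulas for $g^0_{\rm hom}(s,\xi_\alpha)$ and for $(g^\infty)^0_{\rm hom}(s,\xi_\alpha)$ (the analogue of \eqref{f0hom} with $g$ replaced by $g^\infty$), and bounding the resulting $\int(1+|\cdot|^{1-q})$ term by Jensen together with the $L^1$-control on the competitor afforded by (iii), I obtain
\[
|g^0_{\rm hom}(s,\xi_\alpha)-(g^\infty)^0_{\rm hom}(s,\xi_\alpha)|\le C(1+|\xi_\alpha|^{1-q}).
\]
Since $g^\infty$ is positively $1$-homogeneous, so is $(g^\infty)^0_{\rm hom}$; applying the displayed estimate to $t\xi_\alpha$ and dividing by $t$ as $t\to+\infty$ identifies $(g^0_{\rm hom})^\infty$ with $(g^\infty)^0_{\rm hom}$, and combining with (i) and the definition of $Tf^{0,\infty}_{\rm hom}$ yields the claim. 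The step I expect to be most delicate is the gluing in (ii): the test domain $(tQ')_{,1}$ is stretched only in the horizontal directions, so the small-cell competitors live on anisotropic boxes of the form $(rQ')_{,1}$ rather than on cubes, and the interfacial corrections must remain $T_s(\mathcal M)$-valued without producing errors that exceed the $t^{-2}$ normalization as the mesh is refined.
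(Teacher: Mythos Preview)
Your proposal is correct in outline and in its essential ideas. For (i)--(iii) the paper simply defers to \cite[Proposition 2.1]{LTZ1}, so your self-contained arguments in fact supply more detail than the paper itself; the projection trick $\tilde\varphi:=P_s\circ\varphi$ that you use for the inequality $Tf^0_{\rm hom}\le g^0_{\rm hom}$ is exactly the right mechanism, and your derivations of coercivity, growth and Lipschitz continuity from Lemma~\ref{extension} are standard and sound (your lower-bound step is really Jensen rather than the triangle inequality, but the conclusion is the same). For (iv) your route through $g$ and $(g^\infty)^0_{\rm hom}$ is equivalent to the paper's route through $f$ and $T(f^\infty)^0_{\rm hom}$---both follow the pattern of \cite[Proposition 3.1-(v)]{BMbv}---and the identification $(g^0_{\rm hom})^\infty=(g^\infty)^0_{\rm hom}$ via the scaling argument you describe is correct and arguably cleaner than the paper's two separate one-sided estimates.

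The one place where you can simplify substantially is (ii). Rather than performing the gluing directly on tangential competitors (which, as you rightly flag, is delicate because the cell domain $(tQ')_{,1}$ is anisotropic), observe that once (i) is in hand you may simply invoke the quasiconvexity of $g^0_{\rm hom}(s,\cdot)$ on all of $\mathbb R^{3\times 2}$, which is part of the standard unconstrained homogenization--dimension-reduction theory (cf.\ \cite{BFF}). Since every $\psi\in W^{1,\infty}_0(Q';T_s(\mathcal M))$ is in particular in $W^{1,\infty}_0(Q';\mathbb R^3)$, and since $\xi_\alpha+\nabla_\alpha\psi(x_\alpha)\in[T_s(\mathcal M)]^2$ pointwise, the full quasiconvexity inequality for $g^0_{\rm hom}(s,\cdot)$ restricts immediately, via \eqref{(2.3)}, to the tangential one for $Tf^0_{\rm hom}(s,\cdot)$. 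This bypasses the gluing entirely and removes the difficulty you anticipated.
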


\begin{proof}
    The proof of (i), (ii), and (iii) follows from \cite[Proposition 2.1]{LTZ1}. Now we prove (iv) following the technique of \cite[Proposition 3.1-(v)]{BMbv}. Consider a sequence $(t_n)_n\subset\R$ such that $t_n\to +\infty$ as $n\to +\infty$, a sequence $(k_n)_n\subset\N$, and $(\varphi_n)_n\subset W^{1,\infty}((0,k_n)^2_{,1}; T_s(\mathcal{M}))$ such that $\varphi(x_{\alpha}, x_3) = 0$ for every $(x_{\alpha}, x_3) \in \partial (0,k_n)^2 \times \left(-\frac{1}{2},\frac{1}{2}\right)$ and such that for every $s\in \mathcal{M}$ and $\xi_\alpha\in\R^{3\times2}$
\begin{equation}
\label{recession}
    Tf^{0,\infty}_{hom}(s,\xi_\alpha)=\lim_{n\to+\infty}\frac{Tf^0_{hom}(s, t_n\xi_\alpha)}{t_n},
\end{equation}
and
\begin{equation*}
    \media_{(0,k_n)^2_{,1}} f(y_\alpha, y_3, t_n\xi_\alpha+t_n\nabla_\alpha\varphi_n|\nabla_3\varphi_n)dy\le Tf^0_{hom}(s,t_n\xi_\alpha)+\frac{1}{n}.
\end{equation*}
By (H2) and since $Tf_{hom}^0$ has linear growth, we have that
\begin{equation}
\label{asterisco}
    \int_{(0,k_n)^2_{,1}}|\nabla_{t_n}\varphi_n|dx\le C(1+|\xi_\alpha|),
\end{equation}
for some positive $C=C(\alpha,\beta)$. Since
\begin{equation*}
    \lim_{n\to+\infty}\frac{1}{t_n} \media_{(0,k_n)^2_{,1}} f(y_\alpha, y_3, t_n\xi_\alpha+t_n\nabla_\alpha\varphi_n|\nabla_3\varphi_n)dy = Tf^0_{hom}(s, \xi_\alpha),
\end{equation*}
then by (H4) and \eqref{recession} we have
\begin{align*}
    &Tf^0_{hom}(s,\xi_\alpha)-Tf^{0,\infty}_{hom}(s,\xi_\alpha)\\
    &\le \liminf_{n \to \infty}\left\{\media_{(0,k_n)^2_{,1}} \left|f(y_\alpha, y_3, t_n\xi_\alpha+t_n\nabla_\alpha\varphi_n|\nabla_3\varphi_n)-\frac{f(y_\alpha, y_3, t_n\xi_\alpha+t_n\nabla_\alpha\varphi_n|\nabla_3\varphi_n)}{t_n}\right|dy \right\}.
\end{align*}
Summing and subtracting $f^{\infty}(y_\alpha, y_3, t_n\xi_\alpha+t_n\nabla_\alpha\varphi_n|\nabla_3\varphi_n)$ we get
\begin{align*}
    &Tf^0_{hom}(s,\xi_\alpha)-Tf^{0,\infty}_{hom}(s,\xi_\alpha)\\
    &\le \liminf_{n \to \infty}\left\{\media_{(0,k_n)^2_{,1}} \left|f(y_\alpha, y_3, t_n\xi_\alpha+t_n\nabla_\alpha\varphi_n|\nabla_3\varphi_n)-f^\infty(y_\alpha, y_3, t_n\xi_\alpha+t_n\nabla_\alpha\varphi_n|\nabla_3\varphi_n)\right|dy \right.\\
    &\left. +\media_{(0,k_n)^2_{,1}} \left|f^{\infty}(y_\alpha, y_3, t_n\xi_\alpha+t_n\nabla_\alpha\varphi_n|\nabla_3\varphi_n)-\frac{f(y_\alpha, y_3, t_n\xi_\alpha+t_n\nabla_\alpha\varphi_n|\nabla_3\varphi_n)}{t_n}\right|dy \right\}.
\end{align*}
By (H4), \eqref{recession}, and the $1$-homogeneity of $f^\infty$ we get
\begin{align*}
    &Tf^0_{hom}(s,\xi_\alpha)-Tf^{0,\infty}_{hom}(s,\xi_\alpha)\\
    &\le \liminf_{n \to \infty}C\left\{\media_{(0,k_n)^2_{,1}} 1+|(\xi_\alpha+\nabla_\alpha \varphi_n|\nabla_3 \varphi_n)|^{1-q}dy\right.\\
    &\left.+\media_{(0,k_n)^2_{,1}} 1+t_n^{1-q}|(\xi_\alpha+\nabla_\alpha \varphi_n|\nabla_3 \varphi_n)|^{1-q}dy\right\}.
\end{align*}
Using H\"older inequality and \eqref{asterisco} we get
\begin{equation*}
    Tf^0_{hom}(s,\xi_\alpha)-Tf^{0,\infty}_{hom}(s,\xi_\alpha)\le C(1+|\xi_\alpha|^{1-q}).
\end{equation*}
Now we have to prove the inverse inequality. First, fix $k\in\N$ and $\varphi\in W^{1,\infty}((0,k)^2_{,1}; T_s(\mathcal{M}))$ such that $\varphi(x_{\alpha}, x_3) = 0$ for every $(x_{\alpha}, x_3) \in \partial (0,k)^2 \times \left(-\frac{1}{2},\frac{1}{2}\right)$, then from (H2) we have that for every $x\in (0,k)^2_{,1}$ and for every $t>0$
\begin{equation*}
    \frac{f(x, t(\xi_\alpha+\nabla_\alpha\varphi|\nabla_3\varphi))}{t}\le \beta(1+|\xi_\alpha+\nabla_\alpha\varphi|\nabla_3\varphi|),
\end{equation*}
so that $\frac{f(\cdot, t(\xi_\alpha+\nabla_\alpha\varphi|\nabla_3\varphi))}{t} \in L^1((0,k)^2_{,1})$. By Fatou's lemma we get
\begin{align*}
    Tf^{0,\infty}_{hom}(s,\xi_\alpha)&\le \limsup_{t\to \infty} \media_{(0,k)_{,1}} \frac{f(y, t(\xi_\alpha+\nabla_\alpha\varphi|\nabla_3\varphi))}{t}dy\\
    & \le \media_{(0,k)_{,1}} f^\infty(y, \xi_\alpha+\nabla_\alpha\varphi|\nabla_3\varphi)dy,
\end{align*}
which implies
\begin{equation}
\label{a}
    Tf^{0,\infty}_{hom}(s,\xi_\alpha)\le T(f^\infty)^{0}_{hom}(s,\xi_\alpha).
\end{equation}
Now, fix $\eta>0$ and let $\varphi\in W^{1,\infty}((0,k)^2_{,1}; T_s(\mathcal{M}))$ such that $\varphi(x_{\alpha}, x_3) = 0$ for every $(x_{\alpha}, x_3) \in \partial (0,k)^2 \times \left(-\frac{1}{2},\frac{1}{2}\right)$ such that
\begin{equation*}
    \media_{(0,k)^2_{,1}} f(y_\alpha, y_3, \xi_\alpha+\nabla_\alpha\varphi|\nabla_3\varphi)dy\le Tf^0_{hom}(s,\xi_\alpha)+\eta.
\end{equation*}
As before, by (H2) we have 
\begin{equation*}
        \media_{(0,k)^2_{,1}}\nabla \varphi\le C(1+|\xi_\alpha|),
\end{equation*}
so from \eqref{a} and (H4) we obtain
\begin{align*}
    &Tf^{0,\infty}_{\hom}(s,\xi_\alpha)- Tf^0_{hom}(s,\xi_\alpha)\le  T(f^{\infty})^0_{\hom}(s,\xi_\alpha)- Tf^0_{hom}(s,\xi_\alpha)\\
    & \le \media_{(0,k)^2_{,1}} \left|f^\infty(y_\alpha, y_3, \xi_\alpha+\nabla_\alpha\varphi|\nabla_3\varphi)-  f(y_\alpha, y_3, \xi_\alpha+\nabla_\alpha\varphi|\nabla_3\varphi)\right|dy+\eta\\
    & C\media_{(0,k)^2_{,1}} 1+|(\xi_\alpha+\nabla_\alpha\varphi|\nabla_3 \varphi)|^{1-q} dy +\eta
\end{align*}
By H\"older inequality we get
\begin{equation*}
    Tf^{0,\infty}_{\hom}(s,\xi_\alpha)- Tf^0_{hom}(s,\xi_\alpha)\le (1+|\xi|^{1-q}).
\end{equation*}
By the arbitrariness of $\eta$ this gives the desired inequality.
\end{proof}
\color{black}

\subsection{Jump part density}
Now we prove some properties for the density $\theta$ defined in \eqref{jumpterm}. To this aim, we introduce some further notations. In view of the fact that the results below are valid for any dimension in the setting of $N$ D$-(N-K)$D dimensional reduction, we present the main steps of the proof, which in dimension $2$ could be significantly simplified. In the remainder of this section there is an abuse of notation which will not generate contradiction with \eqref{jumpterm} in view the results we are going to show.\\

\noindent
Given $\nu = \{\nu_1, \nu_2\}$ an orthonormal basis of $\mathbb{R}^2$ and $(a,b) \in \mathcal{M} \times \mathcal{M},$ we denote by
\[
{\mathcal Q}_{\nu} := \{\lambda_1 \nu_1 + \lambda_2 \nu_2: \lambda_1, \lambda_2 \in (-1/2, 1/2)\}
\]
and, for $x_\alpha \in \mathbb{R}^2,$ we set
\begin{equation}
\label{x_nu}
    \|x_\alpha\|_{\nu, \infty} := \sup_{i = 1,2} |x_\alpha \cdot \nu_i|, \qquad x_{\nu} := x_\alpha \cdot \nu_1,  \qquad x' := x_\alpha \cdot \nu_2,
\end{equation}
so that $x_\alpha$ can be identified with the pair $(x_{\nu}, x')$. 
We also point out that this last definition of $\mathcal Q_\nu$ is a slight abuse of notation, indeed at the beginning of this section $Q_\nu$ has been introduced as the unit cube of $\R^2$, centered at the origin, and with two sides orthogonal to $\nu$, (with $\nu \in \mathbb S^1$). Here $\nu=\{\nu_1, \nu_2\}$ is an orthonormal basis of $\R^2$, but the notation $Q_\nu$ is still consistent since by definition $Q_{\nu_1}=Q_{\nu_2}=\mathcal{Q}_\nu$.
\\
Let $u_{a,b,\nu}: \R^2\times(-1/2,1/2) \rightarrow \mathcal{M}$ be the function defined by
\[
u_{a,b,\nu}
:= \left \{
\begin{array}{lll}
\!\!\!\!\!\! & a \qquad & \textnormal{if $x_{\nu} > 0$}\\
\!\!\!\!\!\! & b \qquad & \textnormal{if $x_{\nu} \le 0$}
\end{array}
\right.
\]
For every $t, h >0$,  we also introduce the following classes of functions
\begin{eqnarray}
\label{B set}
&& \mathcal{A}_t(a,b, \nu) := \{ \phi \in\ W^{1,1}((t {\cal Q}_{\nu})_{,1}; \mathcal{M}): \phi = u_{a,b, \nu} \text{ on }  \partial (t{\mathcal Q}_\nu)\times(-1/2,1/2)\},\nonumber\\
&& \mathcal{B}_{h}(a, b, \nu) := \Bigg \{u \in\ W^{1,1}({\mathcal Q}_{\nu,1}; \mathcal{M}): u(x) = \gamma \left (\frac{x_{\nu}}{h} \right )\nonumber \\
&& \text{ on  $\partial {\mathcal Q}_\nu\times(-1/2,1/2)$ for some $\gamma \in \mathcal{G}(a,b)$} \Bigg\},
\end{eqnarray}
where $\mathcal G$ is the set introduced in \eqref{geodesic}.
\begin{prop}
\label{BMBV3.2_3.3}
Let $f:\R^3\times\R^{3\times 3}\to \R$ be a Carath\'eodory function satisfying {\rm (H1)} $-$ {\rm (H4)}. For every $(a,b,\nu_1)\in\mathcal{M} \times \mathcal{M} \times \mathbb{S}^1,$ there exists
\begin{align}
\theta(a,b, \nu_1) &:= \lim_{t \rightarrow + \infty} \inf_{\phi} \left \{ \frac{1}{t} \int_{(t Q_{\nu})_{,1}} f^{\infty}(y, \nabla \phi) \, dy:  \phi \in \mathcal{A}_t(a, b, \nu)\right \}\label{unogammaA}\\
&=
\lim_{h \rightarrow 0} \inf_u  \left \{\int_{Q_{\nu,1}} f^{\infty}\left (\frac{x_{\alpha}}{h}, x_3, \nabla_h u \right ) dx: u \in \mathcal{B}_h(a, b, \nu) \right \}, \label{unogammaB}
\end{align}
where $\nu=(\nu_1,\nu_2)$ is any orthonormal base of $\R^2$ with first element $\nu_1$, i.e. the limit is dependent only on $\nu_1$.
\end{prop}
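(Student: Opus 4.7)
The plan is to establish first the existence of the limit in~\eqref{unogammaA}, then its identification with~\eqref{unogammaB} via rescaling, and finally the independence on the second basis vector $\nu_2$.

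For the first point, put
\[
\theta_t(a,b,\nu) := \inf_{\phi \in \mathcal{A}_t(a,b,\nu)} \frac{1}{t} \int_{(t\mathcal{Q}_\nu)_{,1}} f^{\infty}(y, \nabla \phi)\, dy,
\]
which is bounded uniformly in $t$ by regularizing $u_{a,b,\nu}$ along a geodesic $\gamma \in \mathcal{G}(a,b)$ in a transition layer of width $1$ and using~\eqref{recessiongrowth}. I would then obtain an approximate sub-additive estimate $\theta_{t_2} \leq \theta_{t_1} + \varepsilon + O(t_1/t_2)$ for $t_1 < t_2$. Since the jump has codimension one, the natural construction is a tiling in the tangential direction $\nu_2$: fill the strip $\{|y_\nu|\leq t_1/2\}\cap t_2\mathcal{Q}_\nu$ by $\lfloor t_2/t_1 \rfloor$ translates of $t_1\mathcal{Q}_\nu$ carrying near-optimal competitors for $\theta_{t_1}$, and extend by the constants $a$ on $\{y_\nu > t_1/2\}$ and $b$ on $\{y_\nu < -t_1/2\}$. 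Adjacent tiles share the common trace $u_{a,b,\nu}$ on their $\nu_2$-faces, and the constant extension matches the competitor on the two $\nu_1$-faces, so the assembled function is admissible in $\mathcal{A}_{t_2}$; its energy is at most $t_2(\theta_{t_1}+\varepsilon) + O(t_1)$ (the last term covers the remainder strip where fewer than $\lfloor t_2/t_1\rfloor$ tiles fit). Division by $t_2$ and the iterated limits $t_2\to\infty$, $t_1\to\infty$ give existence.

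For the equivalence with~\eqref{unogammaB}, I would set $t:=1/h$ and perform the change of variables $y_\alpha = x_\alpha/h$, $y_3 = x_3$. For $u \in \mathcal{B}_h(a,b,\nu)$, the function $\phi(y):= u(hy_\alpha,y_3)$ satisfies $\nabla\phi(y) = h\,\nabla_h u(hy_\alpha,y_3)$, and the Jacobian $h^2$ together with the positive $1$-homogeneity of $f^\infty(y,\cdot)$ yield
\[
\int_{Q_{\nu,1}} f^\infty\!\left(\tfrac{x_\alpha}{h}, x_3, \nabla_h u\right) dx \;=\; \frac{1}{t}\int_{(t\mathcal{Q}_\nu)_{,1}} f^\infty(y, \nabla\phi)\, dy.
\]
The boundary datum $\gamma(x_\nu/h)=\gamma(y_\nu)$ coincides with $u_{a,b,\nu}$ on the two $\nu_1$-faces $\{y_\nu=\pm t/2\}$ (since $t/2\geq 1/2$), and differs from it only on two lateral strips where $|y_\nu|\leq 1/2$, of $2$-measure bounded uniformly in $t$; conversely, a competitor $\phi\in\mathcal{A}_t$ can be modified in an $h$-thin layer near the lateral $\nu_2$-faces of $\mathcal{Q}_\nu$ so that its trace becomes $\gamma(x_\nu/h)$ for some $\gamma\in\mathcal{G}(a,b)$, by interpolating along $\gamma$. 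By~\eqref{recessiongrowth} and the finite length $\mathbf{d}_{\mathcal{M}}(a,b)$, these surface corrections contribute at most $O(1)$ to $\int f^\infty$, hence are negligible once divided by $t$. Combined with the existence of the limit, this yields equality of the two formulas.

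Finally, in $\mathbb{R}^2$ any orthonormal basis $(\nu_1,\nu_2)$ with prescribed $\nu_1\in\mathbb{S}^1$ is determined up to the sign of $\nu_2$, and both $\mathcal{Q}_\nu$ and $u_{a,b,\nu}$ are invariant under $\nu_2\mapsto-\nu_2$; so $\theta$ depends on $\nu_1$ alone. The hard part, I expect, is engineering the manifold-valued transitions in the cut-and-paste steps: because linear interpolation leaves $\mathcal{M}$, the transitions must pass through a geodesic $\gamma\in\mathcal{G}(a,b)$, and one must verify that its $W^{1,1}$-cost, multiplied by the buffer area and weighed by~\eqref{recessiongrowth}, stays $o(t)$ uniformly in the competitor. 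The finite length $\mathbf{d}_{\mathcal{M}}(a,b)$ and the upper bound in~\eqref{recessiongrowth} are the two essential inputs, while Theorem~\ref{label} may be invoked to approximate $W^{1,1}$ competitors by maps smooth up to a finite union of $1$-dimensional singularities whenever the construction so demands.
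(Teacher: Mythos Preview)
Your sub-additivity argument for the existence of the limit in~\eqref{unogammaA} has a real gap: when you tile $t_2\mathcal{Q}_\nu$ by $\lfloor t_2/t_1\rfloor$ translates of $t_1\mathcal{Q}_\nu$ in the $\nu_2$-direction and put a \emph{single} near-optimal competitor for $\theta_{t_1}$ on each tile, you are implicitly using that
\[
\int_{(t_1\mathcal{Q}_\nu + k t_1\nu_2)_{,1}} f^\infty(y,\nabla\phi(\cdot - kt_1\nu_2))\,dy
= \int_{(t_1\mathcal{Q}_\nu)_{,1}} f^\infty(y,\nabla\phi)\,dy,
\]
which would require $f^\infty(\cdot,\xi)$ to be $t_1\nu_2$-periodic. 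Hypothesis (H1), however, gives periodicity only with respect to the lattice $\mathbb{Z}^2$, and $t_1\nu_2\notin\mathbb{Z}^2$ for a generic basis $\nu$. The paper (following \cite{BDFV}) confronts exactly this obstruction: it first establishes existence of the limit for \emph{rational} bases $\nu$ (where one can arrange $t_1\nu_2\in\mathbb{Z}^2$ along a subsequence), and then runs the non-trivial continuity estimate~\eqref{label1}, built from a carefully layered competitor (the $v_h$ in the proof), to transfer the result to arbitrary bases. Without something playing the role of that continuity step your argument does not close.

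Two further remarks. Your change-of-variables identification of~\eqref{unogammaA} with~\eqref{unogammaB} is correct in spirit, but the boundary-datum mismatch between $\mathcal{A}_t$ and $\mathcal{B}_h$ cannot be dismissed as an $O(1)$ surface correction without construction: the paper builds explicit $\mathcal{M}$-valued transitions (the functions $v_h$, $w_h$) whose energy is controlled via Fubini and the geodesic-length bound, and this is where the manifold constraint genuinely bites. On the other hand, your observation that in $\mathbb{R}^2$ the second basis vector $\nu_2$ is determined up to sign, and that $\mathcal{Q}_\nu$ and $u_{a,b,\nu}$ are invariant under $\nu_2\mapsto -\nu_2$, is a legitimate shortcut for the ``independence on $\nu_2$'' conclusion; the paper's more elaborate argument via~\eqref{label1} is written with the higher-dimensional setting in mind, as the authors note at the start of that subsection.
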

\color{black}
\begin{proof}
   The proof of Proposition \ref{BMBV3.2_3.3} follows in a similar way as in \cite{BMbv}, which in turn is very much inspired by \cite[Proposition 2.2]{BDFV} passing through the introduction of an appropriate rescaled surface energy density.
   We present also the main steps.
   In fact, first we prove that, for every $(a,b)\in \mathcal M\times \mathcal M$ and every orthonormal basis $\nu$, the limit in \eqref{unogammaB}  exists (see \cite[Proposition 3.3]{BMbv}), then we prove it also coincides with \eqref{unogammaA}, 
   i.e. a result analogous to \cite[Proposition 3.2]{BMbv} holds.
   It results that the limit in \eqref{unogammaB} exists whenever $\nu, \nu'$ are orthonormal rational bases, (i.e. there exists scalar $\gamma_1,\gamma'_1,\gamma_2,\gamma_2'\in \R\setminus \{0\}$ such that $\gamma_1 \nu_1, \gamma_1' \nu'_1, \gamma_2 \nu_2$ and $\gamma'_2 \nu'_2 \in \mathbb Z^2$), with equal first vector $\nu_1= \nu'_1$.
   Then, we claim that for every $\sigma > 0$ there exists $\delta>0$  (independent of $a$ and $b$) such that
if $\nu$ and $\nu'$ are two orthonormal bases of $\mathbb R^2$ with $|\nu_i -\nu_i'|< \delta$ for every $i = 1,2,$ then
\begin{align}
\label{label1}
\liminf_{h \to 0}I_h(\nu) - K\sigma \leq \liminf_{h \to 0}
I_h(\nu')\leq \limsup_{h \to 0}I_h(\nu') \leq \limsup_{h \to 0}
I_h(\nu) + K\sigma\end{align} 
where $K$ is a positive constant which only depends on $\mathcal M, \beta$ (see \eqref{H2}) and 
$$I_h(\nu):=I_h(a,b,\nu):=\inf \left\{\int_{\mathcal Q_{\nu,1}} f^{\infty}\left (\frac{x_{\alpha}}{h}, x_3, \nabla_h u \right ) dx: u \in \mathcal{B}_h(a, b, \nu) \right \}.$$

Let $\mathcal Q_{\nu,\eta} := (1 - \eta)\mathcal Q_\nu$ where $0 < \eta < 1$. Let $\sigma > 0$ be fixed and let
$0 < \eta < 1$ be such that
$\eta <\frac
{1}{34}$
and 
\begin{align}
\label{3.21}
\max\left\{\eta,
\frac{(1 -\eta)(1 - 2\eta)}{
(1 - 3\eta)}
- (1 -2\eta)\right\} <\sigma
\end{align}

Consider $\delta_0 > 0$ (that may be chosen so that $\delta_0 \leq \frac{\eta}{2
\sqrt{2}}$). Then take any $0 < \delta \leq \delta_0$
and any pair $\nu$ and $\nu'$ of orthonormal basis of $\mathbb R^2$ satisfying $|\nu_i -\nu'_i|\leq \delta$ for $i=1,2$ such that
one has
\begin{equation}\label{3.22}{\mathcal Q}_{\nu,3 \eta}\subset  \mathcal Q_{\nu',2 \eta}\subset {\mathcal Q}_{\nu, \eta}
\end{equation}
and $\{x \cdot \nu'_1=0\}\cap \partial {\mathcal Q}_{\nu,\eta}\subset  \{|x \cdot \nu_1| \leq 1/8\}$.
Given $h>0$  small, we consider $u_h \in \mathcal B_h(a, b, \nu')$
such that $$\int_{\mathcal Q_{\nu',1}}f^\infty\left( 
\frac{x_\alpha}{h},x_3,\nabla_h u_h\right)dx \leq I_h(\nu') + \sigma,$$
where $u_h(x)=u_h(x_\alpha, x_3) = \gamma_h\left(\frac{x_{\nu'}}{h}\right)$ for some $\gamma_h \in \mathcal{G}(a,b)$ and for $x \in \partial \mathcal Q_{\nu'}\times (-\tfrac{1}{2}, \tfrac{1}{2})$.
Then one can  construct $v_h \in \mathcal B_{(1-2\eta)h}(a, b, \nu)$ satisfying
the boundary condition $v_h(x) = \gamma_h \left(\frac{x_\nu}{(1 - 2\eta)h}\right)$ for $x\in \partial \mathcal Q_\nu\times (-1/2,1/2)$. Consider $F_\eta : \mathbb R^2 \to \mathbb R$,
\begin{equation*}F_\eta(x_\alpha) :=\left(
\frac{1 - 2\|x'\|_{\nu,\infty}}{\eta}\right)\frac{x_{\nu'}}{1 - 2\eta}
+
\left(
\frac{\eta- 1 + 2\|x'\|_{\nu,\infty}}
{\eta}\right)
\frac{x_\nu}{1-2 \eta},
\end{equation*}
and define
\begin{equation*}
v_h(x)=v_h(x_\alpha, x_3) :=
\left\{\begin{array}{ll}
u_h\left(\frac{x_\alpha}{1-2 \eta},x_3\right) &
\hbox{ if } x \in \mathcal Q_{\nu', 2 \eta}\times (-1/2,1/2), \\
\gamma_h\left(
\frac{x_{\nu'}}
{(1 - 2\eta)h}\right)
& \hbox{  if } x \in (\mathcal Q_{\nu,\eta}\setminus\mathcal Q_{\nu', 2 \eta})\times (-1/2,1/2)
\\
a &\hbox{ if } x \in (\mathcal Q_\nu \setminus \mathcal Q_{\nu,\eta})\times (-1/2,1/2) \hbox{ and } x_\nu \geq  \frac{1}{4}, \\
\gamma_h\left(\frac{F_\eta(x_\alpha)}{h}\right)
&\hbox{ if }  x \in A_\eta\cap (\mathcal Q_\nu \setminus \mathcal Q_{\nu,\eta})\times(-1/2,1/2),\\
b &\hbox{ if } x \in (\mathcal Q_\nu \setminus \mathcal Q_{\nu,\eta})\times (-1/2,1/2) \hbox{ and } x_\nu \leq \frac{-1}{4},
\end{array}
\right.
\end{equation*}
and $A_\eta:= \{x_\alpha : |x_\nu| \leq 1/4\}$.

One can check that $v_h$ is well defined for $h$ small enough and that $v_h \in \mathcal B_{(1-2\eta)h}(a, b, \nu).$
Therefore
\begin{align}
I_{(1-2\eta)h}(\nu) &\leq 
\int_{\mathcal Q_{\nu,1}}
f^\infty\left(\frac{x_\alpha}{(1-2\eta)h},x_3,\nabla_h v_h\right)
dx  \nonumber \\
&=
\int_{\mathcal Q_{{\nu',2\eta}, 1}}
f^\infty
\left(\frac{x_\alpha}{(1-2 \eta)h},x_3,
\nabla_h v_h\right)dx \nonumber\\ 
&+\int_{(\mathcal Q_{\nu,\eta}\setminus \mathcal Q_{\nu', 2 \eta})\times(-1/2,1/2)}
f^\infty\left(\frac{x_\alpha}{(1-2\eta)h},x_3, \nabla_h v_h\right)
dx\label{3.24B}\\
&+\int_{A_\eta\times(-1/2,1/2)}
f^\infty\left(\frac{x_\alpha}
{(1 -2\eta)h},x_3
,\nabla_h v_h\right)dx\nonumber \\
&=: I_1 + I_2 + I_3. \nonumber
\end{align}
We now estimate each of the three integrals in \eqref{3.24B}. First, we easily get that
\begin{equation}
    \label{3.24}
    I_1 = (1- 2\eta)\int_{\mathcal Q_{\nu'},1}
f^\infty\left(\frac{y_\alpha}{h},x_3,\nabla_h u_h\right)
dy_\alpha dx_3\leq
I_h(\nu')+\sigma.
\end{equation}
In view of \eqref{3.22} we have $\mathcal Q_{\nu,\eta} \subset (1 -\eta)(1 - 2\eta)(1 - 3\eta)^{-1}\mathcal Q_{\nu'}=: D_\eta$. Then we infer
from \eqref{recessiongrowth} and  Fubini’s theorem, one has
\begin{align}
\label{3.25}
I_2 &\leq \beta\int_{(D_\eta \setminus \mathcal Q_{\nu', 2 \eta})\times (-1/2,1/2)}
|\nabla_h v_h| dx \\
&= \frac{\beta}{(1 - 2\eta)h}\int_{((D_\eta\setminus \mathcal Q_{\nu',2 \eta})\cap
\{|x_\nu|\leq{(1-2\eta)h}/2)\}\times(-1/2,1/2)}
\left|\dot{\gamma}_h\left(\frac{x_\nu}{(1-2\eta)h}\right)\right|dx_\alpha d x_3\nonumber\\
&= \beta\mathcal{H}^1
((D_\eta \setminus \mathcal Q_{\nu',2\eta})\cap\{x_{\nu'}= 0\})\frac{1}{
(1-2\eta)h}
\int^{(1-2\eta)h/2}_{-(1-2\eta)h/2}\left|\dot{\gamma_h}\left(\frac{t}{(1-2\eta)h}\right)\right|dt\nonumber\\
&=\beta{\rm dist}_\mathcal{M}(a,b)\left(  \frac{(1 -\eta)(1 - 2\eta)}{
(1 - 3\eta)}
- (1 -2\eta) \right).\nonumber
\end{align}
Now we estimate $I_3$. In order to do so, we observe that from \eqref{3.22} we get
\begin{equation}
    \label{3.26}
    ||\nabla_\alpha F_\eta(x_\alpha)||_{L^\infty(A_\eta,\R^2)}\le C
\end{equation}
for some absolute constant $C>0$, and 
\begin{equation}
    \label{3.27}
    |\nabla_\alpha F_\eta(x_\alpha)\cdot \nu_1|\ge 1,
\end{equation}
for a.e. $x_\alpha\in A_\eta$. Combining \eqref{recessiongrowth}, \eqref{3.26}, and \eqref{3.27} we get
\begin{align}
    I_3&\le \beta \int_{A_\eta}|\nabla_h v_h|dx_\alpha\le \frac{C\beta}{h}\int_{A_\eta}\left|\dot{\gamma_h}\left(\frac{F_\eta(x_\alpha)}{h}\right)\right|dx_\alpha\nonumber\\
    &\le \frac{C\beta}{h}\int_{A_\eta}\left|\dot{\gamma_h}\left(\frac{F_\eta(x_\alpha)}{h}\right)\right||\nabla_\alpha F_\eta(x_\alpha)\cdot \nu_1|dx_\alpha\nonumber\\
    &= C\beta\int_{A'_\eta}\frac{1}{h}\int_{-1/4}^{1/4}\left|\dot{\gamma_h}\left(\frac{F_\eta(t\nu_1+ x')}{h}\right)\right||\nabla_\alpha F_\eta(t\nu_1+ x')\cdot \nu_1|dt\,d\mathcal{H}^1(x'), \nonumber
\end{align}
where we used Fubini's theorem in the last equality and $A'_\eta:=A_\eta\cap\{x_\nu=0\}$. By the change of variable $s=(1/h)F_\eta(t\nu_1+ x')$ it follows that for $\mathcal{H}^1$-a.e. $x'\in A_\eta$
\begin{equation*}
\int_{-1/4}^{1/4}\left|\dot{\gamma_h}\left(\frac{F_\eta(t\nu_1+ x')}{h}\right)\right||\nabla_\alpha F_\eta(t\nu_1+ x')\cdot \nu_1|dt\le \int_\R|\dot{\gamma_h}(s)| ds={\rm dist}_\mathcal{M}(a,b).
\end{equation*}
Combining the previous two inequalities we get
\begin{equation}
    \label{3.28}
    I_3\le C\beta \mathcal{H}^1(A'_\eta){\rm dist}_\mathcal{M}(a,b)=C\beta \eta {\rm dist}_\mathcal{M}(a,b)
\end{equation}
By \eqref{3.24}, \eqref{3.25}, \eqref{3.28}, and \eqref{3.21} we get
\begin{equation*}
    I_{(1-2\eta)h}(\nu)\le I_h(\nu')+K\sigma,
\end{equation*}
with $K=1+\beta\delta(1+C)$, $\delta$ diameter of $\mathcal{M}$ and $C$ the constant from \eqref{3.26}. Passing to the limit $h\to 0$ and by the arbitrariness of $\nu$ and $\nu'$ we get \eqref{label1}. Arguing as in \cite[Proposition 2.2, Step 4]{BDFV} we get that for any orthonormal basis $\nu$ and $\nu'$ with first element $\nu_1\in \mathbb{S}^1$, we get the existence of the limit of $I_h$ for every $\nu$ and 
\begin{equation}\label{existlimIh}
    \lim_{h\to 0}I_h(\nu)=\lim_{h\to 0}I_h(\nu').
\end{equation}
\\

\noindent
Now we prove the equality between \eqref{unogammaA} and \eqref{unogammaB}. Fix $h>0$ and an orthonormal basis $\nu=(\nu_1,\nu_2)$ of $\R^2$. Let
\begin{equation*}
    J_h(\nu)=J_h(a,b, \nu):=\inf\left\{\int_{\mathcal Q_\nu\times(-1/2,1/2)}f^\infty\left(\frac{x_\alpha}{h},x_3,\nabla_h u\right)dx: u\in\mathcal{A}_h(a,b,\nu)\right\},
\end{equation*}
by a change of variable we have also that
\begin{equation*}
    J_h(\nu)=\inf\left\{h^2\int_{\frac{1}{h}\mathcal Q_\nu\times(-1/2,1/2)}f^\infty\left({x_\alpha},x_3,\nabla u\right)dx: u\in\mathcal{A}_{1/h}(a,b,\nu)\right\},
\end{equation*}
We want to prove that
\begin{equation}
\label{BMBV3.29}
    \lim_{h\to 0}J_h(\nu)=\lim_{h\to 0}I_h(\nu).
\end{equation}
For $h$ small enough, set $\tilde{h}:=h/(1-h)$ and consider $u_{\tilde{h}}\in \mathcal{B}_{\tilde{h}}(a,b, \nu)$ such that
\begin{equation*}
\int_{\mathcal Q_\nu\times(-1/2,1/2)}f^\infty\left(\frac{x_\alpha}{\tilde{h}},x_3,\nabla_{\tilde{h}} u_{\tilde{h}}\right)dx\le I_{\tilde{h}}(\nu)+h,
\end{equation*}
with $u_{\tilde{h}}(x)=\gamma_{\tilde{h}}(x_\nu/\tilde{h})$ in $\partial\mathcal Q_\nu\times (-1/2,1/2)$, for some $\gamma_{\tilde{h}}\in\mathcal{G}(a,b)$. Now, for every $x\in\mathcal Q_\nu\times (-1/2,1/2)$ we define
\begin{equation*}
    v_{h}(x):=
    \begin{cases}
        u_{\tilde{h}}(\frac{x_\alpha}{1-h},x_3) \quad \text{if }x\in \mathcal{Q}_{\nu,h}\\
        \gamma_{\tilde{h}}\left(\frac{x_\nu}{1-2||x'||_{ \nu, \infty}}\right) \quad\text{otherwise}
    \end{cases}.
\end{equation*}
It follows that
\begin{align}
    J_h(\nu)&\le\int_{\mathcal Q_{\nu}\times(-1/2,1/2)}f^\infty\left(\frac{x_\alpha}{{h}},x_3,\nabla_{{h}} v_h\right)dx \label{Jhest}\\
    &\le \int_{\mathcal Q_{\nu, h}\times(-1/2,1/2)}f^\infty\left(\frac{x_\alpha}{h},x_3,\nabla_{h} v_h\right)dx +\int_{(\mathcal Q_\nu\setminus \mathcal Q_{\nu,h})\times(-1/2,1/2)}f^\infty\left(\frac{x_\alpha}{h},x_3,\nabla_{h} v_h\right)dx \nonumber\\
    & = I_1+I_2. \nonumber
\end{align}
Now we estimate $I_1$ and $I_2$. For $I_1$ we have that by construction

\begin{equation}
\label{3.30}
    I_1=(1-h)\int_{\mathcal Q_\nu\times(-1/2,1/2)} f^\infty\left(\frac{x_\alpha}{\tilde{h}},x_3, \nabla_{\tilde{h}}u_{\tilde{h}}\right)dx\le (1-h)(I_{\tilde{h}}(\nu)+h).
\end{equation}
Now we turn to $I_2$.
\begin{align*}
    I_2&\le \beta\int_{(\mathcal Q_\nu\setminus\mathcal Q_{\nu,h})\times(-1/2,1/2)}\left|\dot{\gamma}_{\tilde{h}}\left(\frac{x_\nu}{1-2||x'||_{\nu,\infty}}\right)\right|\left( \frac{1}{1-2||x'||_{\nu,\infty}} +\frac{|x_\nu||\nabla_\alpha (||x'||_{\nu,\infty})|}{(1-2||x'||_{\nu,\infty})^2}\right)dx\\
    &\le 2\beta \int_{(\mathcal Q_\nu\setminus\mathcal Q_{\nu,h})\cap\{x_\nu\le (1-2||x'||_{\nu,\infty})/2\}}\left|\dot{\gamma}_{\tilde{h}}\left(\frac{x_\nu}{1-2||x'||_{\nu,\infty}}\right)\right|\left( \frac{1}{1-2||x'||_{\nu,\infty}}\right)dx_\alpha
\end{align*}
where, in order, we used the growth condition of $f^\infty$, the fact that $\dot{\gamma}_{\tilde{h}}\left(\frac{x_\nu}{1-2||x'||_{\nu,\infty}}\right)=0$ on the complementary set of $\{x_\nu\le (1-2||x'||_{\nu,\infty})/2\}$, and the fact that $||\nabla_\alpha(||x'||_{\nu,\infty})||_{L^\infty(\mathcal{Q}_\nu,\R^2)}\le 1$. Denoting $\mathcal{Q}'_\nu:=\mathcal{Q}_\nu\cap\{x_\nu=0\}$ and $\mathcal{Q}'_{\nu,h}:=\mathcal{Q}_{\nu,h}\cap\{x_\nu=0\}$, then by Fubini's Theorem we get
\begin{align}
\label{3.31}
    I_2&\le 2\beta \int_{(\mathcal Q'_\nu\setminus\mathcal Q'_{\nu,h})}\int_{-(1-2||x'||_{\nu,\infty})/2}^{(1-2||x'||_{\nu,\infty})/2}\left|\dot{\gamma}_{\tilde{h}}\left(\frac{t}{1-2||x'||_{\nu,\infty}}\right)\right|\left( \frac{1}{1-2||x'||_{\nu,\infty}}\right)dtdx'\nonumber\\
    & \le 2\beta{\rm dist}_{\mathcal{M}}(a,b)\mathcal{H}^1(\mathcal Q'_\nu\setminus\mathcal Q'_{\nu,h})\le 2\beta{\rm dist}_{\mathcal{M}}(a,b)h,
\end{align}
and so from \eqref{Jhest}, \eqref{3.30}, \eqref{3.31}  and \eqref{existlimIh}, we infer that
\begin{equation*}
    \limsup_{h\to 0}J_h(\nu)\le\lim_{h\to 0}I_h(\nu).
\end{equation*}
On the other hand, if we fix $h>0$ small enough and  $\tilde{u}_h\in\mathcal{A}_1(a,b,\nu)$ such that
\begin{equation*}
    \int_{\mathcal Q_{\nu}\times(-1/2,1/2)}f^\infty\left(\frac{x_\alpha}{{h}},x_3,\nabla_{{h}} \tilde{u}_h\right)dx\le J_h(\nu)+h,
\end{equation*}
and we denote for $x\in \mathcal{Q}_\nu\times(-1/2,1/2)$
\begin{equation*}
    w_h(x):=
    \begin{cases}
        \tilde{u}_h\left(\frac{x_\alpha}{1-h},x_3\right) \quad \text{if } x_\alpha\in \mathcal{Q}_{\nu, h}\\
        \gamma\left(\frac{x_\nu}{(2||x'||_{\nu,\infty}-1+h)(1-h)}\right) \quad \text{otherwise}
    \end{cases}
\end{equation*}
with $\gamma\in\mathcal{G}(a,b)$, then it is easy to check that $w_h\in \mathcal{B}_{(1-h)h}(a,b,\nu)$. Arguing as in the previous estimates we get that
\begin{align*}
    I_{(1-h)h}(\nu)\le (1-h)(J_h(\nu)+h)+2\beta h{\rm dist}_{\mathcal{M}}(a,b),
\end{align*}
which leads to 
\begin{equation*}
    \liminf_{h\to 0} J_h(\nu)\ge \lim_{h\to 0} I_{h}(\nu),
\end{equation*}
and this concludes the proof.
\end{proof}

\color{black}

\color{black}

\begin{prop}
\label{BMbvProp3.4}
    The function $\theta$ defined is \eqref{jumpterm} is continuous on $\mathcal{M}\times\mathcal{M}\times\mathbb{S}^1$ and there exist constants $C_1,C_2>0$ such that
    \begin{align}
        &|\theta(a_1,b_1, \nu_1)-\theta(a_2,b_2, \nu_1)|\le C_1(|a_1-a_2|+|b_1-b_2|),\label{cont}\\
        &\theta(a_1,b_1,\nu_1)\le C_2|a_1-b_1|\label{dist},
    \end{align}
    for every $a_1,b_1,a_2,b_2\in\mathcal{M}$ and $\nu_1\in \mathbb{S}^1$.
\end{prop}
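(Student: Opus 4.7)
The plan is to establish \eqref{dist} by a direct test-function argument, then deduce \eqref{cont} by a surgery construction modeled on \cite[Proposition 3.4]{BMbv}, and finally to combine \eqref{cont} with the continuity in the direction $\nu_1$ that is already encoded in the estimate \eqref{label1} from the proof of Proposition \ref{BMBV3.2_3.3} to obtain joint continuity.

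For \eqref{dist} I would take any $\gamma\in\mathcal{G}(a_1,b_1)$ and use the one-dimensional profile $u(x):=\gamma(x_\nu/h)$, which lies in $\mathcal{B}_h(a_1,b_1,\nu)$ for $h$ small. Since $\nabla_h u$ is supported where $|x_\nu|\le h/2$ with $|\nabla_h u|=\tfrac{1}{h}|\dot\gamma(x_\nu/h)|$, the upper bound in \eqref{recessiongrowth}, Fubini, and the change of variable $s=x_\nu/h$ give
\[
\int_{\mathcal{Q}_{\nu,1}} f^{\infty}\!\left(\tfrac{x_\alpha}{h},x_3,\nabla_h u\right)dx \le \beta\int_{\mathbb{R}}|\dot\gamma|\,ds = \beta\,\mathbf{d}_\mathcal{M}(a_1,b_1).
\]
Passing to the limit $h\to 0$ in the characterization \eqref{unogammaB} of Proposition \ref{BMBV3.2_3.3}, and using the bound $\mathbf{d}_\mathcal{M}(a,b)\le C|a-b|$ (valid on a smooth compact connected submanifold, since the quotient $\mathbf{d}_\mathcal{M}(a,b)/|a-b|$ extends continuously to the diagonal with value $1$ and is bounded by compactness), then yields \eqref{dist}.

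For \eqref{cont} I would fix $\eta\in(0,1/2)$ and choose an almost-optimal $u_h\in\mathcal{B}_h(a_1,b_1,\nu)$ with boundary trace $\gamma^{(1)}_h(x_\nu/h)$, $\gamma^{(1)}_h\in\mathcal{G}(a_1,b_1)$. Following the template of the construction leading to \eqref{3.24B}, I build a competitor $v_h\in\mathcal{B}_{(1-2\eta)h}(a_2,b_2,\nu)$ by setting $v_h(x):=u_h(x_\alpha/(1-2\eta),x_3)$ on $\mathcal{Q}_{\nu,2\eta}\times(-1/2,1/2)$ and, on the outer annulus, using the $(a_2,b_2)$-profile $\gamma^{(2)}(x_\nu/((1-2\eta)h))$ with $\gamma^{(2)}\in\mathcal{G}(a_2,b_2)$, glued to the interior through a short $\mathcal{M}$-valued curve in a thin transition layer. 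The interior contribution is bounded by $(1-2\eta)(\theta(a_1,b_1,\nu_1)+\eta)$ as in \eqref{3.24}, and the annular contribution splits into a constant-plateau region (where bridging $a_1$ to $a_2$, resp.\ $b_1$ to $b_2$, has $\mathcal{M}$-distance $\le C(|a_1-a_2|+|b_1-b_2|)$) and a strip of width $\lesssim h$ in which a Fubini/change-of-variable estimate analogous to \eqref{3.25}--\eqref{3.31} controls the cost by $K(|a_1-a_2|+|b_1-b_2|+\eta)$. Letting $h\to 0$ and then $\eta\to 0$, and symmetrizing in the roles of $(a_1,b_1)$ and $(a_2,b_2)$, gives \eqref{cont}.

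Joint continuity then follows by combining \eqref{cont}, which is uniform in $\nu_1$, with the estimate \eqref{label1} of Proposition \ref{BMBV3.2_3.3}, which provides continuity in $\nu_1$ uniformly in $(a,b)$ (the constant $K$ there depending only on $\mathcal{M}$ and $\beta$). The main obstacle is to make the transition-layer construction in the second step fully rigorous while keeping every intermediate value on $\mathcal{M}$ and checking that the gluing cost indeed scales linearly in $|a_1-a_2|+|b_1-b_2|$; this is precisely where the compact connectedness of $\mathcal{M}$ enters, through the uniform comparison $\mathbf{d}_\mathcal{M}(\cdot,\cdot)\le C|\cdot-\cdot|$ that legitimizes the short geodesic bridges used in the construction.
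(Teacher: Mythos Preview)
Your proposal is correct and follows essentially the same approach as the paper. The paper likewise proves \eqref{dist} by testing with the geodesic profile $\gamma(x_\nu/h)$ and proves \eqref{cont} by the surgery construction from \cite[Proposition~3.4]{BMbv}; the only notable difference is that the paper builds its competitor in the class $\mathcal{A}_1(a_2,b_2,\nu)$ (step boundary, using the $J_h$-characterization) rather than in $\mathcal{B}_{(1-2\eta)h}(a_2,b_2,\nu)$ as you suggest, which slightly simplifies the strip analysis since no second geodesic profile needs to be matched near $x_\nu=0$.
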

\begin{proof}
    It follows in a similar way to \cite[Proposition 3.4]{BMbv}, taking into account similar modifications as we did in Proposition \ref{BMBV3.2_3.3}. For the reader's convenience, we present a short proof. For shortness sake, we use the same notation as in the previous proof. From the previous proof it is clear that $\theta (a,b,\cdot)$ is uniformly continuous on $\mathbb{S}^1$ so it is enough to prove \eqref{cont} in order to prove the continuity of $\theta$ on $\mathcal{M}\times\mathcal{M}\times\mathbb{S}^1$, and thus we start from there. Fix $\nu_1\in\mathbb{S}^1$ and let $\nu=(\nu_1,\nu_2)$ be an orthonormal base of $\R^2$. For $h >0$, we denote $\tilde{h}:=h/(1-h)$ and we consider $\gamma_{\tilde{h}}\in \mathcal{G}(a_1,b_1)$ and $u_{\tilde{h}}\in \mathcal B_{\tilde{h}}(a_1,b_1,\nu)$ such that $u_{\tilde{h}}(x)=\gamma_{\tilde{h}}(\frac{x_\nu}{\tilde{h}})$ on $(\partial \mathcal Q_\nu)_1$ and
    \begin{equation*}
        \int_{ \mathcal Q_{\nu,1}}f^\infty\left(\frac{x_\alpha}{\tilde{h}},x_3,\nabla_{\tilde{h}} u_{\tilde{h}}\right)dx\le I_{\tilde{h}}(a_1,b_1,\nu)+h.
    \end{equation*}
    As in the previous proof, now we construct a function $v_h\in\mathcal{A}_{1}(a_2,b_2,\nu)$ starting from $u_{\tilde{h}}$. Fix $\gamma_a\in\mathcal{G}(a_2,a_1)$, $\gamma_b\in\mathcal{G}(b_2,b_1)$, and define
    \begin{equation*}
        v_h(x):=
        \begin{cases}
        &u_{\tilde{h}}\left(\frac{x_\alpha}{1-h},x_3\right) \quad x\in \mathcal Q_{\nu,h}\times(-1/2,1/2)\\

        &\gamma_{\tilde{h}}\left(\frac{x_\nu}{1-2||x'||_{\nu,\infty}}\right) \quad x\in A_1\times(-1/2,1/2)\\

        &\gamma_a\left(\frac{2||x_\alpha||_{\nu,\infty}-1}{h}+\frac{1}{2}\right)\quad x\in A_2\times(-1/2,1/2)\\

        &\gamma_b\left(\frac{2||x_\alpha||_{\nu,\infty}-1}{h}+\frac{1}{2}\right)\quad x\in A_3\times(-1/2,1/2)\\
        
        &\gamma_a\left(\frac{2||x'||_{\nu,\infty}-1}{2x_\nu}+\frac{1}{2}\right)\quad x\in A_4\times(-1/2,1/2)\\

        &\gamma_b\left(\frac{1-2||x'||_{\nu,\infty}}{2x_\nu}+\frac{1}{2}\right)\quad x\in A_5\times(-1/2,1/2)
        \end{cases}
    \end{equation*}
with
\begin{align*}
    &A_1:=\left\{\frac{1-h}{2}\le||x'||_{\nu,\infty}<\frac{1}{2}\text{ and } |x_\nu|\le-||x'||_{\nu,\infty}+\frac{1}{2}\right\}\\
    &A_2:=(\mathcal Q_\nu\setminus \mathcal Q_{\nu,h})\cap\left\{x_\nu \ge h/2 \right\}\\
    &A_3:=(\mathcal Q_\nu\setminus \mathcal Q_{\nu,h})\cap\left\{x_\nu \le -h/2 \right\}\\
    &A_4:=\left\{0<x_\nu\le\frac{h}{2}, \frac{1}{2}-x_\nu\le||x'||_{  \nu,\infty}< \frac{1}{2}\right\}\\
    &A_5:=\left\{-\frac{h}{2}<x_\nu\le 0, \frac{1}{2}+x_\nu\le||x'||_{  \nu,\infty}< \frac{1}{2}\right\}.
\end{align*}
Since by construction $v_h\in\mathcal{A}_{1}(a_2,b_2,\nu)$, then
\begin{equation*}
 J_h(a_2,b_2,\nu)\le\int_{ \mathcal Q_{\nu,1}}f^\infty\left(\frac{x_\alpha}{\tilde{h}},x_3,\nabla_{{h}} v_{{h}}\right)dx.
\end{equation*}
By the same argument as in the proof of Proposition \ref{BMBV3.2_3.3}, we get
\begin{equation*}
    \int_{ \mathcal Q_{\nu,1}}f^\infty\left(\frac{x_\alpha}{h},x_3,\nabla_{{h}} v_{{h}}\right)dx\le I_{\tilde{h}}(a_1,b_1,\nu)+h,
\end{equation*}
and
\begin{equation*}
    \int_{A_1\times(-1/2,1/2)}f^\infty\left(\frac{x_\alpha}{h},x_3,\nabla_{{h}} v_{{h}}\right)dx\le C{\rm dist}_{\mathcal{M}}(a_1,b_1)h,
\end{equation*}
where ${\rm dist}_{\mathcal{M}}$ denotes the geodesic distance on $\mathcal{M}$. Now we estimate the integrals over $A_2$ and $A_4$. The integrals on $A_3$ and $A_5$ follow in a similar way. We start with $A_2$. First, we define 
\begin{equation*}
F_h(x_\alpha):=\frac{2||x_\alpha||_{\nu,\infty}-1}{h}+\frac{1}{2}
\end{equation*}
From the linear growth of $f^\infty$, Fubini's Theorem, the fact that $A_2\subset F_h^{-1}([-1/2,1/2))$, the Coarea formula, and the fact that $v_h$ does not depends on $x_3$ on $A_2$ we get
\begin{align*}
\int_{A_2\times(-1/2,1/2)}f^\infty\left(\frac{x_\alpha}{h},x_3,\nabla_{{h}} v_{{h}}\right)dx&\le \beta\int_{A_2}|\dot{\gamma}_a(F_h(x_\alpha))||\nabla_\alpha F_h(x_\alpha)|dx_\alpha\\
& \le \beta\int_{F_h^{-1}([-1/2,1/2))}|\dot{\gamma}_a(F_h(x_\alpha))||\nabla_\alpha F_h(x_\alpha)|dx_\alpha\\
& \le \beta \int_{(-1/2,1/2)}|\dot{\gamma}_a(t)| \mathcal{H}^1(F_h^{-1}(t))dt
\end{align*}
Since $F_h^{-1}(t)=\partial \mathcal Q_{\nu, \frac{h(1-2t)}{2}}$ for every $t\in (-1/2,1/2)$, then it holds $\mathcal{H}^1(F_h^{-1}(t))\le \mathcal{H}^1(\partial \mathcal Q_\nu)=2$, which gives us
\begin{equation*}
\int_{A_2\times(-1/2,1/2)}f^\infty\left(\frac{x_\alpha}{h},x_3,\nabla_{{h}} v_{{h}}\right)dx\le 2\beta{\rm dist}_{\mathcal{M}}(a_1,a_2)
\end{equation*}
Now we estimate the integral on $A_4$. We define
\begin{equation*}
    G_h(x_\alpha):=\frac{2||x'||_{\nu,\infty}-1}{2x_\nu}+\frac{1}{2}.
\end{equation*}
As before, by the linear growth of $f^\infty$ and Fubini's Theorem we get
\begin{align*}
    &\int_{A_4\times(-1/2,1/2)}f^\infty\left(\frac{x_\alpha}{h},x_3,\nabla_{{h}} v_{{h}}\right)dx\\
    &\le \beta\int_{(0,h/2)}\int_{G^{-1}(\cdot,x_\nu)([-1/2,1/2))}|\dot{\gamma_a}(G(x',x_\nu))||\nabla_\alpha G(x',x_\nu)|dx'dx_\nu
\end{align*}
Since $|\nabla_{x'} G(x_\alpha)|= 1/x_\nu$ and $|\nabla_{x_\nu} G(x_\alpha)|\le 1/x_\nu$ for a.e. $x_\alpha\in A_4$ we get that $|\nabla_\alpha G(x_\alpha)|\le 2 |\nabla_{x'} G(x_\alpha)|$ and thus
\begin{align*}
    &\int_{A_4\times(-1/2,1/2)}f^\infty\left(\frac{x_\alpha}{h},x_3,\nabla_{{h}} v_{{h}}\right)dx\\
    &\le 2\beta\int_{(0,h/2)}\int_{G^{-1}(\cdot,x_\nu)([-1/2,1/2))}|\dot{\gamma_a}(G(x',x_\nu))||\nabla_{x'} G(x',x_\nu)|dx'dx_\nu
\end{align*}
By construction, $G(\cdot, x_\nu)$ is Lipschitz for every $x_\nu\in (0,h/2)$ and so by the Coarea formula we get again that
\begin{align*}
    &\int_{A_4\times(-1/2,1/2)}f^\infty\left(\frac{x_\alpha}{h},x_3,\nabla_{{h}} v_{{h}}\right)dx\\
    &\le 2\beta\int_{(0,h/2)}\int_{(-1/2,1/2)}|\dot{\gamma_a}(t)|\mathcal{H}^0(\{x':G(x',x_\nu)=t\})dtdx_\nu\\
    &\le C\,h\,{\rm dist}_{\mathcal{M}}(a_1,a_2)
\end{align*}
where we used the fact that $\mathcal{H}^0(\{x':G(x',x_\nu)=t\})=1$ for every $t\in (-1/2,1/2)$.
Estimating the integrals over $A_3$ and $A_5$ in a similar way we get
\begin{equation*}
    J_h(a_2,b_2)\le \int_{\mathcal Q_{\nu,1}}f^\infty\left(\frac{x_\alpha}{h},x_3,\nabla_{{h}} v_{{h}}\right)dx\le I_h(a_1,b_1) + C(h + {\rm dist}_{\mathcal{M}}(a_1,a_2)+{\rm dist}_{\mathcal{M}}(b_1,b_2)).
\end{equation*}
Passing to the limit for $h\to 0$ we get
\begin{equation*}
    \theta(a_1,b_1,\nu)\le \theta(a_2,b_2,\nu) +C({\rm dist}_{\mathcal{M}}(a_1,a_2)+{\rm dist}_{\mathcal{M}}(b_1,b_2)))
\end{equation*}
since ${\rm dist}_{\mathcal{M}}$ is equivalent to the Euclidian distance, and by the arbitrariness of $a_1,a_2,b_1,$ and $b_2$ in $\mathcal{M}$ we get \eqref{cont}.\\

\noindent
Now we prove \eqref{dist}. Fix an orthonormal basis $\nu=(\nu_1,\nu_2)$ of $\R^2$ and let $\gamma\in \mathcal{G}(a_1,b_1)$. Consider $u_h(x)=\gamma(x_\alpha/h)$. By construction, $u_h\in \mathcal B_h(a_1,b_1,\nu)$. Moreover, since $\lim_{h\to 0} J_h(\nu)=\lim_{h\to 0} I_h(\nu)$ it holds
\begin{align*}
    \theta(a_1,b_1,\nu)&\le \liminf_{h\to 0} \int_{\mathcal Q_{\nu,1}}f^\infty\left(\frac{x_\alpha}{h},x_3,\nabla_{{h}} u_{{h}}\right)dx\\
    &\le \liminf_{h\to 0} \frac{\beta}{h}\int_{\mathcal Q_\nu}\left|\dot{\gamma}\left(\frac{x_\alpha\cdot\nu}{h}\right)\right|dx_\alpha\\
    &\le \beta{\rm dist}_{\mathcal{M}}(a_1,b_1,\nu).
\end{align*}
Again, by the equivalence of ${\rm dist}_{\mathcal{M}}$ with the Euclidian distance we get \eqref{dist}.
\end{proof}

\begin{prop}
\label{BMBVprop_4.1}
    There exists a function $K: \mathcal{M} \times \mathcal{M} \times \mathbb{S}^{1} \rightarrow [0, + \infty)$ continuous in the last variable and such that
\\
{\rm (i)} $K(a, b, \nu) = K (b, a, - \nu)$ for every $(a, b, \nu) \in  \mathcal{M} \times \mathcal{M} \times \mathbb{S}^{1},$
\\
{\rm (ii)} for every finite subset $T$ of $\mathcal{M},$
\begin{equation}
\label{BMBV_prop4.1}
    I^0(u, S) = \int_S K (u^+, u^-, \nu_u) \, d \mathcal{H}^{1},
\end{equation}
for every $u \in BV(\omega; T)$ and every Borel subset $S$ of $\omega \cap S_u.$
\end{prop}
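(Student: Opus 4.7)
The plan is to set $K:=\theta$, with $\theta$ defined in \eqref{jumpterm}, and verify the three claimed properties. Continuity in $\nu$ is immediate from Proposition~\ref{BMbvProp3.4}. The symmetry $K(a,b,\nu)=K(b,a,-\nu)$ follows by direct inspection of \eqref{jumpterm}: replacing $(a,b,\nu)$ by $(b,a,-\nu)$ transforms the boundary datum $u_{a,b,\nu}$ into $u_{b,a,-\nu}$, but these two functions coincide on $\partial(tQ_\nu)\times(-1/2,1/2)$ up to an $\mathcal{H}^2$-negligible subset of $\{x_{\alpha}\cdot\nu=0\}$, so the two admissible classes and hence the infima agree.

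For the integral representation \eqref{BMBV_prop4.1}, I would first show that $S\mapsto I^0(u,S)$ is the restriction to Borel subsets of $\omega\cap S_u$ of a finite Radon measure that is absolutely continuous with respect to $\mathcal{H}^1\mres S_u$. Since $T$ is finite, $u\in BV(\omega;T)$ is piecewise constant and $S_u$ is $\mathcal{H}^1$-rectifiable with finite $\mathcal{H}^1$-measure; a recovery-sequence construction built out of geodesic transitions across each interface, combined with the upper bound \eqref{dist}, yields $I^0(u,S)\le C\int_{S}|u^+-u^-|\, d\mathcal{H}^1$, which provides the desired absolute continuity.

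By the Besicovitch differentiation theorem it then remains to identify the Radon--Nikod\'ym derivative at $\mathcal{H}^1$-a.e. $x_0\in S_u$, namely
\[
\lim_{\rho\to 0^+}\frac{I^0(u,Q_{\nu_u(x_0)}(x_0,\rho))}{\rho}=\theta(u^+(x_0),u^-(x_0),\nu_u(x_0)).
\]
This is the heart of the proof and proceeds by a blow-up argument at points $x_0$ that are simultaneously Lebesgue points of $u^\pm$ and $\nu_u$, so that $u(x_0+\rho\,\cdot)\to u_{a,b,\nu}$ in $L^1_{\mathrm{loc}}$, with $a:=u^+(x_0)$, $b:=u^-(x_0)$, $\nu:=\nu_u(x_0)$. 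The upper bound is obtained by selecting a near-optimal $\phi\in\mathcal{A}_t(a,b,\nu)$ in \eqref{unogammaA}, rescaling it onto $Q_\nu(x_0,\rho)$ with a well-chosen $t=t(\rho,h)$, and matching its lateral boundary datum to $u$ via geodesic interpolation on a thin collar, invoking the equivalence \eqref{unogammaA}--\eqref{unogammaB} to align the periodicity scale of $f^{\infty}$ with the blow-up scale. The lower bound uses the $\Gamma$-$\liminf$ inequality: admissible competitors for $I^h$ on $Q_{\nu}(x_0,\rho)$ are modified near the lateral boundary to belong to the class $\mathcal B_h(a,b,\nu)$ of \eqref{B set}, with an error controlled by \eqref{dist} and \eqref{cont}.

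The main obstacle is the execution of the boundary modifications under the manifold constraint, since cutoff plus affine interpolation generically leaves $\mathcal{M}$. This is handled by composing with the nearest-point retraction $\Pi$ on the tubular neighborhood $\mathcal{U}$ used in the definition of $g$, approximating the resulting map by an element of $\mathcal{D}(\Omega,\mathcal{M})$ via Theorem~\ref{label}, and finally interpolating through geodesics of the family $\mathcal{G}$ on thin strips whose energetic cost vanishes thanks to the linear growth of $f^\infty$ together with \eqref{dist}. The dimension-reduction features of the problem do not obstruct these constructions, since all modifications take place on the lateral boundary $\partial Q_\nu\times(-1/2,1/2)$, where the trace conditions defining $\mathcal A_t$ and $\mathcal B_h$ coincide.
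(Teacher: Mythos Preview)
Your approach is correct in principle but takes a genuinely different route from the paper and proves strictly more than the proposition asks. The paper does \emph{not} identify $K$ with $\theta$ at this stage: it applies the abstract integral-representation theorem of Ambrosio--Braides \cite[Theorem~3.1]{AB} to the set function $G_T(u,A):=I^0(u,S_u\cap A)$, verifying the five structural hypotheses (growth, measure, locality, lower semicontinuity, translation invariance), which yields some density $K^T_\omega(x_\alpha,a,b,\nu)$; translation invariance then removes the $x_\alpha$-dependence, and a blow-up formula shows $K$ is independent of $T$ and $\omega$. The identification $K=\theta$ is deferred to Propositions~\ref{limsupbv} and~\ref{liminfbv}, where the inequality $K\le\theta$ is obtained by testing the representation on the single function $u^{a,b}_{\nu_1}$, and the reverse bound comes from the blow-up lower bound \eqref{BM(6.3)}.

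Your plan collapses these steps: you set $K:=\theta$ from the outset and argue directly that the Radon--Nikod\'ym derivative of $I^0(u,\cdot)$ with respect to $\mathcal{H}^1\mres S_u$ equals $\theta(u^+,u^-,\nu_u)$ via a two-sided blow-up. This is legitimate and your outline of the lower bound is essentially the argument the paper carries out later in the proof of \eqref{BM(6.3)}. The cost is that your upper bound must be established for a \emph{generic} $u\in BV(\omega;T)$ at a generic jump point, which forces you to match the recovery sequence to $u$ (not just to the model jump $u_{a,b,\nu}$) on the whole cube; the paper avoids this by first securing the abstract density $K$ and then proving $K\le\theta$ on a single model configuration. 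Your approach therefore front-loads the hard analysis and makes Proposition~\ref{BMBVprop_4.1} absorb the jump part of the main theorem, while the paper's approach is more modular and lets the abstract representation theorem do the structural work (existence, $x_\alpha$-independence, symmetry) for free.
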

\begin{proof}
Let $T$ be a finite subset of $\mathcal M$ and $\omega\subset\R^2$ open and bounded. For every $A \in \mathcal{A}(\omega)$ and $u \in BV(\omega; T)$ we define
\[
G_T(u, A) := I^0(u, S_u \cap A),
\]
where $I^0(u, S_u \cap A)$ is defined as in \eqref{IoBV}.

We aim to show that the assumptions of \cite[Theorem 3.1]{AB} are satisfied by $G_T: BV(\omega; T) \times \mathcal{A}(\omega) \to [0, +\infty)$, namely:

\begin{enumerate}
    \item $0 \leq G_T(u, A) \leq \Lambda \mathcal{H}^{n-1}(A \cap S_u)$ with a fixed constant $\Lambda \in \mathbb{R}$;
    \item $G_T(u, \cdot)$ is the restriction to $\mathcal{A}(\omega)$ of a Borel measure on $\omega$;
    \item $G_T(u, A) = G_T(v, A)$ if $u = v$ a.e. in $A$;
    \item If $u_h \to u$ a.e. in $A$, then
    \[
    G_T(u, A) \leq \liminf_{h \to +\infty} G_T(u_h, A);
    \]
    \item For every $y \in \mathbb{R}^2$, we have
    \[
    G_T(\tau_y u, \tau_y A) = G_T(u, A),
    \]
    where $(\tau_y u)(x) = u(x - y)$ and $\tau_y A = A + y$, provided $\tau_y A \subset \Omega$.
\end{enumerate}

Property (1) and (3) follows immediately from the definition of $I^0$. Property (2), follows from the fact the Borel measure $I^0(u, \cdot)$ on $\omega$ (see \ref{LocalizationBV}), restricted to $S_u$, is an extension of $G_T(u, \cdot)$.

For (4), if $u_h \to u$ a.e. on $A$, then the sequence $(u_h)$ is equibounded in $A$. It follows that $(u_h)_h$ converges to $u$ in $L^1(A;\mathcal{M})$.  It not difficult to see that both $u_h$ and $u$ can be extended to $\Omega$ by taking the extension constant in the third variable, and so that $u_h \to u$ a.e. on $A_{,1}$ and $(u_h)_h$ converges to $u$ in $L^1(A_{,1};\mathcal{M})$. For every open set $E \subset A$ with $S_u \cap A \subset E$, we have
\[
I^0(u, E) \leq \liminf_{h \to \infty} I^0(u_h, E).
\]
Moreover, from \eqref{LocalizationBV}, (H2), and the equiboundedness of $(u_h)_h$ we get
\[
I^0(u_h, E) = I^0(u_h, S_{u_h} \cap E) + I^0(u_h, E \setminus S_{u_h}) \leq I^0(u_h, S_{u_h} \cap A) + C |E|.
\]
Taking the infimum over $E \supset S_u \cap A$, we obtain
\[
I^0(u, S_u \cap A) \leq \liminf_{h \to \infty} I^0(u_h, S_{u_h} \cap A).
\]
Finally, property (5) follows from the translation invariance of $I^h$ (see (H1) and \eqref{functional}) with respect to the $x_\alpha$-variable.
We can now apply Theorem 3.1 in \cite{AB}, from which follows the existence of a continuous function $K_\omega^T : \omega \times T \times T \times \mathbb{S}^{1} \to [0, +\infty[$ such that $K_\omega^T(x_\alpha, a, b, \nu)=K_\omega^T(x_\alpha, a, b, -\nu)$ and
\[
I^0(u, S_u \cap A) = G_T(u, A) = \int_{S_u \cap A} K_\omega^T(x_\alpha, u^+, u^-, \nu_u) \, d\mathcal{H}^{1}
\]
for every $u \in BV(\omega; T)$ and $A \in \mathcal{A}(\omega)$. For $x_\alpha, x_0\in\mathbb{R}^2$, $a,b \in \mathcal{M}$ and $\nu \in \mathbb{S}^1$ define 
\[
u^{a,b}_{x_0,\nu_1}(x) :=
\begin{cases}
	a & \text{if } (x_\alpha-x_0) \cdot \nu_1 \geq 0, \\
	b & \text{if } (x_\alpha-x_0) \cdot \nu_1 < 0
\end{cases}, \quad \Pi_{x_0, \nu_1} := \{ x_\alpha \in \mathbb{R}^2 : (x_\alpha-x_0) \cdot \nu_1 = 0 \}.
\]
The continuity of $K_\omega^T$ implies that
\begin{equation*}
	K_\omega^T(x_0,a,b, \nu)=\lim_{\rho \to 0} \frac{I^0(u^{a,b}_{x_0,\nu_1}(x), B_{\rho}(x_0)\cap\Pi_{x_0, \nu_1} )}{\mathcal{H}^1( B_{\rho}(x_0)\cap\Pi_{x_0, \nu_1})},
\end{equation*}
for every $(x_0,a,b,\nu)\in \omega\times T\times T\times \mathbb{S}^1$. By the arbitrariness of $T$ and $\omega$ it follows that $K_\omega^T$ is independent from $T$ and $\omega$, i.e. $K_\omega^T=K$. By a blow-up argument it is possible to show that $K$ does not depend from the $x_\alpha$-variable. Since $I^0(u, \cdot)$ restricted to $S_u$ is a regular Borel measure, it follows the integral representation on the Borel subsets of $S_u$ and this concludes the proof.
\end{proof}

\section{Proof of Theorem \ref{BVcase}}
\label{BV}
\color{black}
\noindent
In this section we prove Theorem \ref{BVcase}. In particular, we assume $f$ to satisfy hypothesis (H1), (H2), (H3) and (H4).\\

\noindent
The first step in order to prove Theorem \ref{BVcase} is to localize $I^0$, with
\begin{equation}
    \label{IoBV}
    I^0(u,A):=\inf\left\{\liminf_{n\to+\infty}I^{h_n}(u_n,A) : u_n \to u\,\text{ in $L^1(A_{,1};\mathcal M)$}\right\},
\end{equation}
for any  $u\in BV(\omega; \mathcal{M})$ and $A\in\mathcal{A}(\omega)$.

\begin{lemma}[Localization]
    \label{LocalizationBV}
    Given $u\in BV(\omega;\mathcal{M})$ the set function $I^0(u,\cdot):\mathcal{A}(\omega)\to \R$ is the restriction to $\mathcal{A}(\omega)$ of a Radon measure absolutely continuous with respect to $\mathcal{L}^2+|Du|$.
\end{lemma}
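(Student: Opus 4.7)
The plan is to apply the De Giorgi--Letta criterion to $I^0(u,\cdot)$, showing that it is the trace on $\mathcal{A}(\omega)$ of a Borel measure, and then to establish the absolute continuity bound $I^0(u,A)\leq C(\mathcal{L}^2(A)+|Du|(A))$, from which both finiteness and absolute continuity follow. Monotonicity and inner regularity are immediate from the definition: if $A\subset B$, any $L^1$-converging sequence on $B_{,1}$ restricts to one on $A_{,1}$, and inner regularity follows by exhausting $A$ with compactly contained open subsets and invoking the upper bound established in the last step.

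The core technical point is the \textbf{fundamental estimate}: given $A,A',B\in\mathcal{A}(\omega)$ with $A\subset\subset A'$, and sequences $u_n^1\to u$ in $L^1(A'_{,1};\mathcal{M})$, $u_n^2\to u$ in $L^1(B_{,1};\mathcal{M})$, I would produce a sequence $w_n\to u$ in $L^1((A\cup B)_{,1};\mathcal{M})$ with
\[
I^{h_n}(w_n, A\cup B) \leq I^{h_n}(u_n^1, A')+I^{h_n}(u_n^2, B)+C\int_{(A'\setminus A)_{,1}}|u_n^1-u_n^2|\,dx+o(1).
\]
The construction proceeds by De Giorgi slicing between $A$ and $A'$: pick cut-off functions $\varphi_k\in C^\infty_c(A')$ layered between $N$ nested shells, define $v_n^k:=\varphi_k u_n^1+(1-\varphi_k)u_n^2$, select by averaging the shell on which the gradient energy is smallest ($\leq 1/N$ of the total), then project $v_n^k$ back onto $\mathcal{M}$ using the nearest point projection $\Pi$ introduced in Section~\ref{NP} (valid because both $u_n^1,u_n^2$ take values in $\mathcal{M}$, so $v_n^k$ is close to $\mathcal{M}$ in the transition layer whenever $u_n^1-u_n^2$ is small there). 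The Lipschitz property of $\Pi$ together with the linear growth and Lipschitz dependence of $f$ in $\xi$ (hypotheses (H2), (H3)) then produce the claimed estimate. This is the step I expect to be the main obstacle, because both the $\frac{1}{h_n}\partial_{x_3}$ factor in $\nabla_h$ and the manifold constraint have to be handled simultaneously; the projection step is what makes the argument genuinely manifold-valued and is reminiscent of \cite{BMbv}.

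From the fundamental estimate, subadditivity $I^0(u,A\cup B)\leq I^0(u,A')+I^0(u,B)$ follows by diagonal extraction after sending the error term to zero (using $u_n^i\to u$ in $L^1$), and then letting $A'\nearrow A$ via inner regularity. Superadditivity for disjoint $A,B\in\mathcal{A}(\omega)$ is immediate because $I^{h_n}$ is additive on disjoint open sets and any sequence converging on $A\cup B$ restricts to sequences on $A$ and $B$. Together with the monotonicity and inner regularity already noted, the De Giorgi--Letta criterion yields the extension to a Borel measure.

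Finally, for the absolute continuity bound, I would construct a recovery sequence $u_n\in W^{1,1}(\Omega;\mathcal{M})$ with $u_n\to u$ in $L^1$ and $\limsup_n I^{h_n}(u_n,A)\leq C(\mathcal{L}^2(A)+|Du|(A))$. Approximate $u\in BV(\omega;\mathcal{M})$ by $W^{1,1}(\omega;\mathcal{M})$ maps through mollification followed by the nearest point projection $\Pi$ (this is valid on a neighborhood of $\mathcal{M}$, and after a small-mass argument on the singular part of $Du$ by Theorem~\ref{label}); view the resulting planar map as $x_3$-independent on $\Omega$, so that $\nabla_h u_n=(\nabla_\alpha u_n|0)$, and apply (H2) to obtain
\[
I^{h_n}(u_n,A)\leq \beta\bigl(\mathcal{L}^2(A)+\|\nabla_\alpha u_n\|_{L^1(A)}\bigr).
\]
A standard diagonal process combined with the weak-$\ast$ convergence of $|\nabla u_n|\mathcal{L}^2$ to $|Du|$ on $A$ (provided by any reasonable $BV$-approximation, e.g.\ the Anzellotti--Giaquinta approximation adapted via $\Pi$) yields the desired bound. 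Absolute continuity with respect to $\mathcal{L}^2+|Du|$ is then immediate, completing the proof.
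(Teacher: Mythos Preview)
Your outline matches the paper's proof: the paper first obtains the upper bound $I^0(u,A)\le \beta(\mathcal{L}^2(A)+C^*|Du|(A))$ by approximating $u$ with smooth $\mathbb{R}^3$-valued maps (Anzellotti--Giaquinta), extending constant in $x_3$, and then projecting onto $\mathcal{M}$; for the measure property it verifies the De Giorgi--Letta type inequality $I^0(u,A)\le I^0(u,B)+I^0(u,A\setminus\overline{C})$ for $\overline{C}\subset B\subset A$ via a slicing argument, which it defers to \cite[Lemma~3.2]{LTZ1}. Your more detailed fundamental-estimate formulation is the same mechanism.

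One technical point you should adjust: in both your fundamental estimate and your absolute-continuity step you invoke the nearest-point projection $\Pi$, but $\Pi$ is defined only on a tubular neighborhood of $\mathcal{M}$, whereas mollification and the convex interpolation $\varphi_k u_n^1+(1-\varphi_k)u_n^2$ produce maps with values merely in $co(\mathcal{M})$, with no pointwise guarantee of lying in that neighborhood (the $L^1$-smallness of $u_n^1-u_n^2$ on the transition shell does not give pointwise smallness). The paper handles exactly this by citing \cite[Proposition~2.1]{BMbv}, which provides a projection from $W^{1,1}$ maps with values in $co(\mathcal{M})$ to $\mathcal{M}$-valued maps with gradient controlled by a universal constant $C^*$; that is the tool you want, not $\Pi$. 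Your appeal to Theorem~\ref{label} in the upper bound is also misplaced, since that result concerns density in $W^{1,1}(\Omega;\mathcal{M})$, not approximation of $BV$ maps.
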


\begin{proof}
Fix $u\in BV(\omega; \mathcal{M})$ and $A\in\mathcal{A}(\omega)$. By \cite[Theorem 3.9]{AFP} there exists a sequence $(u'_n)_n\subset W^{1,1}(A;\R^3)\cap C^\infty(A;\R^3)$ such that $u'_n\to u$ in $L^1(A;\R^3)$ and $||\nabla_{\alpha}u'_n||_{L^1(A;\R^3)}\to |D_\alpha u|(A)$. So, if we consider $u_n(x):=u'_n(x_\alpha)$, we have that $(u_n)_n\subset W^{1,1}(A_{,1};\R^3)\cap C^\infty(A_{,1};\R^3) $ and $||\nabla_{h_n}u_n||_{L^1(A_{,1};\R^3)}\to |D_\alpha u|(A)$.
By construction, $u_n(x)\in co(\mathcal{M})$ for a.e. $x\in A_{,1}$ and every $n\in \N$, where $co(\mathcal{M})$ stands for the convex envelope of $\mathcal{M}$. Following the argument of \cite[Proposition 2.1]{BMbv}, there exists a sequence $(w_n)_n\subset W^{1,1}(A_{,1};\mathcal{M})$ satisfying
\begin{equation*}
    \int_{A_{,1}} |\nabla_{h_n} w_n|dx\le C^*\int_{A_{,1}} |\nabla_{h_n} u_n|dx,
\end{equation*}
for some constant $C^*$ depending only on $\mathcal{M}$. Moreover, we have $w_n\to u$ in $L^1(A_{,1}; \R^3)$ and by the growth condition $(H2)$ we have 
\begin{equation*}
    I^0(u, A)\le \beta(\mathcal{L}^2(A)+C^*|Du|(A)).
\end{equation*}
Now we just have to prove that
\begin{equation*}
    I^0(u,A)\le I^0(u,B)+I^0(u, A\setminus \Bar{C}),
\end{equation*}
for any $A, B, C\in\mathcal{A}(\omega)$ with  $\Bar{C}\subset B\subset A$. The proof of the previous inequality follows from a standard slicing argument similar to \cite[Lemma 3.2]{LTZ1}.
\end{proof}

Now we prove the limsup inequality.

\begin{prop}\label{limsupbv}
    For every $u\in BV(\omega;\mathcal{M})$ it holds
    \begin{equation*}
        I^0(u)\le I(u),
    \end{equation*}
    with $I$ defined in \eqref{BVcandidate} and $I^0$ defined in \eqref{IoBV}.
\end{prop}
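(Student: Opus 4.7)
The plan is to build a recovery sequence for $I^0(u)$ by combining three separate constructions, one for each term in $I(u)$, and to glue them via a diagonal argument that exploits the lower semicontinuity of $I^0(u,\cdot)$ established in Lemma \ref{LocalizationBV} together with the continuity/growth properties of $Tf^0_{\rm hom}$ and $\theta$ proved in Propositions \ref{characterization} and \ref{BMbvProp3.4}. By the density of $\mathcal{D}(\Omega,\mathcal{M})$ in $W^{1,1}(\Omega,\mathcal{M})$ (Theorem \ref{label}) and by approximating the jump part of $u$ by suitable piecewise constant $BV$ maps, the problem splits into the three cases discussed below.

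For $u\in W^{1,\infty}(\omega;\mathcal{M})$ (pure absolutely continuous case), I would cover $\omega$ by small cubes $Q(x_\alpha^i,\rho)$, freeze $s_i:=u(x_\alpha^i)$ and $\xi_\alpha^i:=\nabla_\alpha u(x_\alpha^i)$, and use near-optimal tangential competitors $\varphi_{t,s_i,\xi_\alpha^i}\in W^{1,\infty}((tQ')_{,1};T_{s_i}\mathcal M)$ in the cell problem \eqref{Tfhom} for $Tf^0_{\rm hom}(s_i,\xi_\alpha^i)$, rescaled by the periodicity parameter $h_n$. The sum of the frozen affine map and the rescaled tangential perturbation stays in a tubular neighbourhood of $\mathcal{M}$; composing with the nearest-point projection $\Pi$ from Lemma \ref{extension} and interpolating across cubes via a Lipschitz cut-off yields the desired $\mathcal{M}$-valued sequence. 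The linear growth (H2), Lipschitz continuity (H3), and Proposition \ref{characterization}(iii) ensure that both the projection and the cut-off produce lower-order errors when one sends $h_n\to 0$, then $t\to +\infty$, then $\rho\to 0$.

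For the jump part, I would first approximate $u$ by a piecewise constant $BV$ function taking finitely many values in $\mathcal{M}$, with jump set a finite union of $C^1$ arcs (continuity of $\theta$ from Proposition \ref{BMbvProp3.4} takes care of the error in the trace). Around each piece of the jump set, Proposition \ref{BMBV3.2_3.3} together with formula \eqref{unogammaB} provides a near-optimal competitor matching the prescribed constant values $u^\pm$ on the lateral boundary of a thin strip, so that the local minimizers glue seamlessly with the constant pieces outside. For the Cantor part, I would mollify $u$ by convolution (after a manifold-compatible extension, as in the proof of Lemma \ref{LocalizationBV}), apply the absolutely continuous construction to the smoothed $u_\varepsilon$, and use Proposition \ref{characterization}(iv) together with a Reshetnyak-type continuity argument to identify the limit of $\int_\omega Tf^0_{\rm hom}(u_\varepsilon,\nabla_\alpha u_\varepsilon)\,dx_\alpha$ as $\varepsilon\to 0$ with the sum of the absolutely continuous and Cantor contributions of $I(u)$; the bound in Proposition \ref{characterization}(iv) is what allows the passage to the recession function $Tf^{0,\infty}_{\rm hom}$ on the Cantor measure.

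The main obstacle I expect is the tangent-space constraint in the cell problem \eqref{Tfhom}: the test fields live in $T_s\mathcal{M}$, a space that varies with the base point $s=u(x_\alpha)$, so passing from piecewise-frozen constructions to a global $\mathcal{M}$-valued competitor is delicate. Controlling the error introduced by the projection $\Pi$ uniformly in the three regimes (bulk, Cantor, jump) and diagonalizing the several limits $(n,t,\rho,\varepsilon)$ without creating spurious surface contributions along the interfaces between bulk and jump constructions is the most subtle point, and will likely require the bound \eqref{BMbv(3.6)} of Lemma \ref{extension} to absorb the non-homogeneous corrections to $f^\infty$.
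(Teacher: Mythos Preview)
Your plan is a plausible direct-construction strategy, but it diverges substantially from the paper's proof and carries a structural gap in how the three pieces are combined.

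The paper does \emph{not} build separate recovery sequences for the bulk, jump and Cantor parts and then glue them. Instead it exploits that $I^0(u,\cdot)$ is (the restriction of) a Radon measure (Lemma~\ref{LocalizationBV}) to split
\[
I^0(u,\omega)=I^0(u,\omega\setminus S_u)+I^0(u,S_u),
\]
and then bounds each piece by an independent, abstract argument. For $I^0(u,\omega\setminus S_u)$ it quotes the Sobolev upper bound $I^0(u,A)\le\int_A Tf^0_{\rm hom}(u,\nabla_\alpha u)\,dx_\alpha$ for $u\in W^{1,1}(\omega;\mathcal{M})$ (from the companion paper \cite{LTZ1}) and then invokes the manifold-valued relaxation theorem \cite[Thm.~7.1]{BMbv}, enabled by Proposition~\ref{characterization}, to pass to $BV$; evaluating the relaxed functional on $A=\omega\setminus S_u$ kills the jump contribution and yields exactly the bulk plus Cantor terms. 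For $I^0(u,S_u)$ it uses the integral representation of Proposition~\ref{BMBVprop_4.1}, $I^0(u,S)=\int_S K(u^+,u^-,\nu_u)\,d\mathcal{H}^1$ for $u$ taking finitely many values, proves the \emph{pointwise} inequality $K(a,b,\nu)\le\theta(a,b,\nu)$ by a single explicit construction on $Q_{\nu,1}$, and then extends to general $u\in BV(\omega;\mathcal{M})$ via the approximation machinery of \cite{AMT} together with the continuity of $\theta$ (Proposition~\ref{BMbvProp3.4}). No gluing of recovery sequences across regions is ever performed.

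The gap in your outline is the treatment of a \emph{general} $u$ with all three parts present simultaneously. Your ``three cases'' are not cases of a single decomposition of $u$: the mollification step you propose for the Cantor part acts on the whole function and, if $u$ has jumps, a Reshetnyak-type limit of $\int_\omega Tf^0_{\rm hom}(u_\varepsilon,\nabla_\alpha u_\varepsilon)\,dx_\alpha$ produces on $S_u$ the density $Tf^{0,\infty}_{\rm hom}(\tilde u,(u^+-u^-)\otimes\nu_u)$ rather than $\theta(u^+,u^-,\nu_u)$, so you cannot simply ``add back'' a separately constructed jump layer without double counting. What rescues the paper's argument is precisely that the set-additivity of $I^0(u,\cdot)$ lets one restrict to $\omega\setminus S_u$ \emph{before} applying the relaxation, so the spurious jump contribution never appears. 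If you want to keep a constructive route, you would effectively have to reprove \cite[Thm.~7.1]{BMbv}; the paper avoids this by citing it.
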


\begin{proof}
For every $u\in BV(\omega;\mathcal{M})$ we can write
\begin{equation}
\label{BMBV5.1}
    I^0(u,\omega)=I^0(u, \omega\setminus S_u)+ I^0(u, S_u),
\end{equation}
where $S_u$ denotes the jump set of $u$. It follows that in order to prove our claim we can separately estimate the two terms on the right hand side of \eqref{BMBV5.1}.
First, by Proposition \cite[Proposition 3.3]{LTZ1} we have for every $A\in\mathcal{A}(\omega)$
\[
I^0(u,A)\le 
\left\{\begin{array}{ll}
\displaystyle \int_A Tf_{\rm hom}^0(u,\nabla_\alpha u)\,dx & \text{if $u\in W^{1,1}(\omega;\mathcal M)$}\\
\\
+\infty & \text{otherwise in $L^1(\omega;\mathbb R^3)$.}
\end{array}\right.
\]
By means of a relaxation argument (see \cite[Thm.\,7.1]{BMbv} which holds by Proposition \ref{characterization}) and exploiting the coercivity of $f$ (see H2), a standard diagonalization argument, and the lower semicontinuity of $I^0$ with respect to $L^1(\Omega,\R^3)$ strong convergence we get
\begin{equation}\label{limsupBV}
\begin{aligned}
    I^0(u, A) \le \, &\int_{A} T f^0_{\rm hom} (u, \nabla_\alpha u) \,dx_\alpha\\ &+\int_{S_u\cap A} H (u^+, u^-, \nu_u) \, d \mathcal{H}^{1}+\int_{A} T f_{\rm hom}^{0,\infty} \left (\tilde{u}, \frac{d D^c_\alpha u}{d |D^c_\alpha u|} \right ) d |D^c_\alpha u|
\end{aligned}
\end{equation}
for some function $H:\mathcal{M}\times \mathcal{M}\times\mathbb{S}^1\to [0+\infty).$ By outer regularity, in \eqref{limsupBV} we can choose $A=\omega\setminus S_u$, which leads to
\begin{align*}
    I^0(u, \omega\setminus S_u)\le &\int_{A} T f^0_{\rm hom} (u, \nabla_\alpha u) \,dx_\alpha +\int_{A} T f_{\rm hom}^{0,\infty} \left (\tilde{u}, \frac{d D^c_\alpha u}{d |D^c_\alpha u|} \right ) d |D^c_\alpha u|.
\end{align*}
Now we estimate $I^0(u, S_u)$. By Proposition \ref{BMBVprop_4.1} we have
\begin{equation*}
    I^0(u,S)=\int_S K (u^+, u^-, \nu_u) \, d \mathcal{H}^{1},
\end{equation*}
for every $u \in BV(\omega; T)$ and every Borel subset $S$ of $\omega \cap S_u,$ so to conclude it is enough to prove that
\begin{equation*}
    K(a, b, \nu) \le \, \theta(a, b, \nu)
\end{equation*}
for every $(a, b, \nu) \in \mathcal{M} \times \mathcal{M} \times \mathbb{S}^1.$
\color{black}
Since the proof is similar to Proposition \ref{BMBV3.2_3.3} we refer to it for the notation. 
Let $\nu = (\nu_1,\nu_2)$ be an orthonormal basis of $\mathbb{R}^2$. Since $K$ and $\theta$ are continuous functions in the $\mathbb{S}^{1}$ variable, we can assume that $\nu$ is a rational basis, i.e. for all $i \in \{1,2\}$, there exists $\gamma_i \in \mathbb{R} \setminus \{0\}$ such that $v_i := \gamma_i \nu_i \in \mathbb{Z}^2$. The general case, then, follows by a density argument. Given an arbitrary $0 < \eta < 1$, by Proposition \ref{BMBV3.2_3.3} and \eqref{BMBV3.29}, there exists $h_0 > 0$, $u_0 \in \mathcal B_{h_0}(a, b, \nu)$ and $\gamma_{h_0} \in\mathcal G(a, b)$ such that
	\[
	u_0(x) = \gamma_{h_0}\left( \frac{x_\alpha \cdot \nu_1}{h_0} \right) \quad \text{and} \quad \int_{Q_{\nu,1}} f^\infty\left( \frac{x_\alpha}{h_0},x_3, \nabla_{h_0} u_0 \right) dx \leq \theta(a, b, \nu_1) + \eta.
	\]	
	Given $\lambda \in \mathbb{Z}$, let $(h_n)_n$ be a vanishing positive sequence and set
	\[
	x_n^{(\lambda)} := h_n \lambda v_2, \quad Q_{\nu, n}^{(\lambda)} := x_n^{(\lambda)} + \left(\frac{h_n}{h_0}\right) Q_{\nu,1}.
	\]
	Next, we define
\begin{align*}
	\Lambda_n := \Bigg\{ \lambda \in \mathbb{Z} : Q_{\nu, n}^{(\lambda)} \subset Q_{\nu,1} &\text{ and } \frac{x_n^{(\lambda)}}{h_n} \in  l \left( \frac{1}{h_0} +  \gamma_2 \right) \nu_2 + P \text{ for some } l \in \mathbb{Z} \Bigg\},
\end{align*}
	with
	\[
	P := \left\{  \alpha v_2 : \alpha \in [-1/2, 1/2) \right\}.
	\]
	Now we consider the sequence of functions
	\[
	u_n(x_\alpha, x_3) := 
	\begin{cases}
		u_0\left( \frac{h_0}{h_n}(x_\alpha - x_n^{(\lambda)})\cdot\nu_1 \right) & \text{if } x \in Q_{\nu, n}^{(\lambda)} \text{ for some } \lambda \in \Lambda_n, \\
		\gamma_{h_0}\left( \frac{x_\alpha \cdot \nu_1}{h_0} \right) & \text{otherwise}.
	\end{cases}
	\]
	By construction, $u_n \in W^{1,1}(Q_{\nu,1}; \mathcal{M})$, $(\nabla_{h_n} u_n)_n$ is bounded in $L^1(Q_{\nu,1}; \mathbb{R}^{3 \times 2})$, and $u_n \to u^{a,b}_{\nu_1}$ in $L^1(Q_{\nu,1}; \mathbb{R}^3)$ as $n \to +\infty$ with
	\[
	u^{a,b}_{\nu_1}(x) :=
	\begin{cases}
		a & \text{if } x_\alpha \cdot \nu_1 \geq 0, \\
		b & \text{if } x_\alpha \cdot \nu_1 < 0
	\end{cases}.
	\]
	By a similar argument as in Step 1 of the proof of \cite[Proposition 2.2]{BDFV}, we get that
	\begin{align}
		\label{BMBV5.3}
		\limsup_{n \to +\infty} \int_{Q_{\nu,1}} f^\infty\left( \frac{x_\alpha}{h_n},x_3, \nabla_{h_n} u_n \right) dx& \leq \int_{Q_{\nu,1}} f^\infty\left( \frac{x_\alpha}{h_0},x_3, \nabla_{h_0} u_0 \right) dx\nonumber\\
		&\leq \theta(a, b, \nu_1) + \eta.
	\end{align}
	For $\rho > 0$, let $A_\rho := Q_{\nu,1} \cap \{ |x_\alpha \cdot \nu_1| < \rho \}_{,1}$ and set $\Pi_{\nu_1} := \{ x_\alpha \in \mathbb{R}^2 : x_\alpha \cdot \nu_1 = 0 \}$. By construction, the sequence $(u_n)_n$ is admissible for $I^0(u^{a,b}_{\nu_1}, A_\eta)$, so that
	\begin{align}
		\label{BMBV5.4}
		I^0(u^{a,b}_{\nu_1}, A_\eta \cap \Pi_{\nu_1}) &\leq I^0(u^{a,b}_{\nu_1}, A_\eta) \leq \liminf_{n \to +\infty} \int_{A_{\eta,1}} f\left( \frac{x_\alpha}{h_n},x_3, \nabla_{h_n} u_n \right) dx \nonumber\\
		&\leq \beta \mathcal{L}^2(A_\eta) + \liminf_{n \to +\infty} \int_{A_{h_n,1}} f\left(  \frac{x_\alpha}{h_n},x_3, \nabla_{h_n} u_n \right) dx \nonumber\\
		&\leq \liminf_{n \to +\infty} \int_{A_{h_n,1}} f\left(  \frac{x_\alpha}{h_n},x_3, \nabla_{h_n} u_n\right) dx + \beta \eta.
	\end{align}
	Here we used assumption (H2) and the fact that $\nabla_{h_n} u_n = 0$ outside $A_{h_n,1}$. On the other hand, by \eqref{BMBV_prop4.1} we have
	\begin{equation}
		\label{BMBV5.5}
		I^0(u^{a,b}_{\nu_1}, A_\eta \cap \Pi_{\nu_1}) = \int_{A_\eta \cap \Pi_{\nu_1}} K(a, b, \nu_1) \, d\mathcal{H}^{1} = K(a, b, \nu_1).
	\end{equation}
	Using (H4), the boundedness of $(\nabla_{h_n} u_n)_n$ in $L^1(Q_{\nu,1}; \mathbb{R}^{3 \times 2})$, the fact that $f^\infty(\cdot, 0) \equiv 0$, and Hölder's inequality, we derive
	\begin{align}
		\label{BMBV5.6}
		&\left| \int_{A_{h_n,1}} f\left(  \frac{x_\alpha}{h_n},x_3, \nabla_{h_n} u_n \right) dx - \int_{Q_{\nu,1}} f^\infty\left(  \frac{x_\alpha}{h_n},x_3, \nabla_{h_n} u_n \right) dx \right|\nonumber\\
		&\leq C \int_{A_{h_n,1}} \left( 1 + |\nabla_{h_n} u_n|^{1-q} \right) dx \leq C \left( h_n + h_n^q \|\nabla_{h_n} u_n\|^{1-q}_{L^1(Q_{\nu,1}; \mathbb{R}^{3 \times 2})} \right) \to 0.
	\end{align}
	From \eqref{BMBV5.3}, \eqref{BMBV5.4}, \eqref{BMBV5.5}, \eqref{BMBV5.6} we get
	\begin{equation*}
		K(a, b, \nu_1)\le \theta(a, b, \nu_1)+ (1+\beta)\eta,
	\end{equation*}
	and from the arbitrariness of $\eta$ we get the desired inequality.
     By exploiting Proposition \ref{BMbvProp3.4}, Proposition \ref{BMBV_prop4.1}, and Lemma \ref{LocalizationBV} we can argue as in \cite[Corollary 5.1]{BMbv}, which, in turn rely on the approximation arguments in \cite[Lemma 4.7 and Proposition 4.8]{AMT}, and together with \eqref{BMBV5.1} we get

    \[
\begin{aligned}
    I^0(u) \le \, &\int_{\omega} T f^0_{\rm hom} (u, \nabla_\alpha u) \,dx_\alpha\\ &+\int_{S_u\cap\omega} \theta (u^+, u^-, \nu_u) \, d \mathcal{H}^{1}+\int_{\omega} T f_{\rm hom}^{0,\infty} \left (\tilde{u}, \frac{d D^c_\alpha u}{d |D^c_\alpha u|} \right ) d |D^c_\alpha u|=I(u),
\end{aligned}
    \]
    where $I$ is defined in \eqref{BVcandidate}, which yields the conclusion.
\end{proof}

\noindent
This concludes the proof of the upper bound. We now prove the lower bound using the blow up method.

\begin{prop}\label{liminfbv}
    For every $u\in BV(\omega,\mathcal{M})$ it holds
    \begin{equation*}
        I^0(u)\ge I(u),
    \end{equation*}
    with $I$ defined in \eqref{BVcandidate} and $I^0$ defined in \eqref{IoBV}.
\end{prop}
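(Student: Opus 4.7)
The plan is to adapt the blow-up method of Fonseca--M\"uller to the joint homogenization/dimension reduction setting, combining the microstructure scale $h_n$ with the blow-up scale $\rho$, and handling the manifold constraint via the projection $\mathbb P_s$ introduced in Section \ref{NP}. Fix sequences $h_n\to 0^+$ and $(u_n)\subset W^{1,1}(\Omega;\mathcal{M})$ with $u_n\to u$ in $L^1(\Omega;\R^3)$ such that $\liminf_n I^{h_n}(u_n)=I^0(u)$; we may assume the liminf is finite. By (H2), the nonnegative measures
\[
\mu_n := f\!\left(\tfrac{x_\alpha}{h_n},x_3,\nabla_{h_n}u_n\right)\mathcal{L}^3\mres\Omega
\]
have uniformly bounded mass, so up to a subsequence $\mu_n\stackrel{*}{\rightharpoonup}\mu$ in $\mathcal{M}(\overline\Omega)$. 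Let $\nu$ be the pushforward of $\mu$ under the projection $(x_\alpha,x_3)\mapsto x_\alpha$. Since $I^0(u)\ge\nu(\omega)$ and, by Lemma \ref{LocalizationBV}, the three parts of $I^0(u,\cdot)$ are mutually singular, the task reduces to estimating from below the Radon--Nikod\'ym derivatives of $\nu$ with respect to $\mathcal{L}^2\mres\omega$, $|D^c_\alpha u|$, and $\mathcal{H}^1\mres S_u$.

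For the bulk part, at an $\mathcal{L}^2$-a.e.\ point $x_0\in\omega$ of approximate differentiability of $u$, I would blow up at scale $\rho_k\to 0^+$ via
\[
w_k(y_\alpha,y_3) := \tfrac{1}{\rho_k}\bigl(u_{n(k)}(x_0+\rho_k y_\alpha,\,y_3)-u(x_0)\bigr),\qquad (y_\alpha,y_3)\in Q'_{,1},
\]
along a diagonal subsequence $n=n(k)$ chosen so that $t_k:=\rho_k/h_{n(k)}\to+\infty$. Theorem \ref{label} allows approximation by smooth manifold-valued maps; the projection $\mathbf{P}_{u(x_0)}$ onto $T_{u(x_0)}(\mathcal M)$ and the auxiliary density $g$ of Lemma \ref{extension} bridge $f$ and its tangential relaxation. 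After a further rescaling by $t_k$ one obtains a competitor for the cell problem \eqref{Tfhom} at parameter $t_k$, and Proposition \ref{characterization}(i) delivers the lower bound $Tf^0_{\rm hom}(u(x_0),\nabla_\alpha u(x_0))$.

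For the Cantor and jump parts, I would first invoke (H4) together with Proposition \ref{characterization}(iv) to interchange $f$ and $f^\infty$ modulo an error of order $|\xi|^{1-q}$, which is absorbed via H\"older's inequality against the $L^1$ bound on $\nabla_{h_n}u_n$ coming from (H2). At $|D^c_\alpha u|$-a.e.\ $x_0$, the rescaled distributional gradients concentrate along the Cantor direction $\tfrac{d D^c_\alpha u}{d|D^c_\alpha u|}(x_0)$, and one recovers the recession density $Tf^{0,\infty}_{\rm hom}$. At $\mathcal{H}^1$-a.e.\ $x_0\in S_u$ with normal $\nu_u(x_0)$ and traces $u^\pm(x_0)$, the blow-up on cubes $Q_{\nu_u(x_0)}(x_0,\rho_k)$ yields functions whose trace is asymptotically $u_{u^+(x_0),u^-(x_0),\nu_u(x_0)}$; after a boundary modification (a fundamental-estimate type argument exploiting \eqref{roba}) the sequence becomes admissible in the class $\mathcal{B}_h$ at effective scale $h\sim\rho_k/h_{n(k)}$, and Proposition \ref{BMBV3.2_3.3} in the form \eqref{unogammaB} yields the lower bound $\theta(u^+(x_0),u^-(x_0),\nu_u(x_0))$.

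The principal obstacle will be the simultaneous coordination of the three small parameters $h_n$, $\rho_k$ and the tubular-neighborhood radius $\delta_0$ underlying $\mathbb P_s$: the diagonal subsequence $n=n(k)$ must be selected so that $\rho_k/h_{n(k)}\to+\infty$ (so that the blown-up domain encloses many periods of the microstructure, which is essential to recover the $t\to+\infty$ limit in the cell formulas \eqref{Tfhom} and \eqref{jumpterm}), while simultaneously the rescaled maps $w_k$ must remain close enough to $\mathcal M$ for the Lipschitz bounds \eqref{(2.8)} and \eqref{roba} to make the replacement of $f$ by the tangentially-projected density quantitative. Using (H4) to pass from $f$ to $f^\infty$ on the Cantor and jump strata decouples these strata from the bulk part and will be the pivotal technical step in disentangling the scales.
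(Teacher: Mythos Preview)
Your overall blow-up architecture matches the paper's, and the treatment of the jump part is essentially the same (blow-up on oriented cubes, pass from $f$ to $f^\infty$ via {\rm (H4)}, De~Giorgi--type boundary modification to land in $\mathcal{B}_{\delta_k}(u^+,u^-,\nu)$, then Proposition~\ref{BMBV3.2_3.3}). Two points, however, deserve attention.

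\textbf{Bulk part: a missing Lipschitz truncation.} After blowing up and passing to the auxiliary density $g$, you need to freeze the second argument of $g(y,s,\xi)$ at $s_0=u(x_0)$; this is where \eqref{(2.8)} is used. But \eqref{(2.8)} gives an error of order $|s-s_0|\,|\xi|$, and your rescaled maps $w_k$ are only bounded in $W^{1,1}$, so $\rho_k w_k$ is not a priori small in $L^\infty$. The paper inserts here a Lipschitz truncation (its Step~2, via \cite[Theorem~2.1]{FM}) to replace $w_k$ by $\overline w_k\in W^{1,\infty}$ with $\overline w_k\to \xi_0 x_\alpha$ in $L^\infty$; only then does \eqref{(2.8)} make the freezing quantitative. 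Also, ``rescaling by $t_k$ to obtain a competitor for \eqref{Tfhom}'' is not immediate: the blown-up maps do not satisfy the lateral boundary condition in \eqref{Tfhom}. The paper circumvents this by identifying the density of the (unconstrained, $s$-frozen) $\Gamma$-limit of $G^{\delta_k}(w,A)=\int_{A_{,1}} g(x_\alpha/\delta_k,x_3,s_0,\nabla_{\delta_k} w)\,dx$ with $g^0_{\rm hom}(s_0,\cdot)$ through the global method \cite{BFM,BFF}, rather than by a direct boundary adjustment.

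\textbf{Cantor part: the direct blow-up is the wrong route here.} Your plan to blow up at $|D^c_\alpha u|$-a.e.\ $x_0$ and argue that ``the rescaled distributional gradients concentrate along the Cantor direction'' is precisely the step that the manifold constraint obstructs: at a Cantor point there is no two-valued limit to build a cell problem around, and the projection device $\mathbb P_s$ does not linearize the constraint uniformly along the blow-up. The paper (following \cite[Section~5.1]{AEL07}) avoids this entirely by an \emph{indirect relaxation argument}: the bulk density bound \eqref{BM(6.1)} already gives $I^0(u)\ge\int_\omega g^0_{\rm hom}(u,\nabla_\alpha u)\,dx_\alpha$ for every $u\in BV(\omega;\mathcal M)$; since $I^0$ is $L^1$-lower semicontinuous, $I^0\ge\overline G$, the BV-relaxation of the right-hand side. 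One then applies \cite[Theorem~2.16]{FM93} to the unconstrained integrand $g^0_{\rm hom}$ (perturbed by $\varepsilon|\xi_\alpha|$ to force coercivity), which produces the Cantor term $\int_\omega Tf^{0,\infty}_{\rm hom}\bigl(\tilde u,\tfrac{dD^c_\alpha u}{d|D^c_\alpha u|}\bigr)\,d|D^c_\alpha u|$ for free. This is both simpler and, in the manifold setting, essentially the only known mechanism; your direct blow-up would require substantial new ideas to close.
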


\begin{proof}

Consider $u \in BV(\omega; \mathcal{M})$ and let $(u_n)_n \subset W^{1,1}(\Omega; \mathcal{M})$ be such that
\[
I^0(u, \omega) = \lim_{n \rightarrow + \infty} \int_{\Omega} f \left ( \frac{x_\alpha}{h_n},x_3, \nabla_{h_n} u_n\right ) \, dx.
\]
Define the sequence of nonnegative Radon measures
\[
\mu_n :=  \left(\int_{(-\frac{1}{2}, \frac{1}{2})}f \left (\frac{(\cdot)_\alpha}{h_n}, x_3, \nabla_{h_n} u_n((\cdot)_\alpha,x_3) \right )dx_3\right) \mathcal{L}^{2} \mres \omega.
\]
Possibly extracting a subsequence, we can infer the existence of a nonnegative Radon measure $\mu \in \mathcal{M}(\omega)$ such that $\mu_n \stackrel{^*}{\rightharpoonup} \mu$ in $\mathcal{M}(\omega).$ Exploiting the Besicovitch Differentiation Theorem, we are able to split $\mu$ into the sum of three nonnegative measure which are mutually singular, namely
\[
\mu = \mu^a + \mu^j + \mu^c
\]
where 
\[
\mu^a \ll \mathcal{L}^2, \qquad \mu^j \ll \mathcal{H}^{1} \mres S_u \qquad \mu^c \ll |D_\alpha^c u|.
\]
As long as $\mu(\omega) \le \, I^0(u, \omega),$ it is sufficient to check that
\begin{equation}
\label{BM(6.1)}
\frac{d \mu}{d \mathcal{L}^2}(x_0) \ge \, T f_{\rm hom}^0(u(x_0), \nabla_\alpha u(x_0)) \qquad \textnormal{for $\mathcal{L}^2-$a.e. $x_0 \in \omega,$}
\end{equation}

\begin{equation}
\label{BM(6.2)}
\frac{d \mu}{d |D_\alpha^c u|}(x_0) \ge \, T f_{\rm hom}^{0,\infty} \left (\tilde{u}(x_0), \frac{d D_\alpha^c u}{d |D_\alpha^c u|} (x_0) \right ) \qquad \textnormal{for $|D_\alpha^c u|-$a.e. $x_0 \in \omega,$}
\end{equation}
and

\begin{equation}
\label{BM(6.3)}
\frac{d \mu}{d \mathcal{H}^{1} \mres S_u} (x_0) \ge \, \theta(u^+(x_0), u^-(x_0), \nu_u(x_0)) \qquad \textnormal{for $\mathcal{H}^{1}-$a.e. $x_0 \in S_u.$}
\end{equation}

\vspace{2mm}

$\bullet \textsc{proof of \eqref{BM(6.1)}}$
Before proving the result we recall that all the operations of sum and difference between the functions $u_n, v_n, w_k, w_{n,k}$ and $u$ must be inteded in the sense of Remark \ref{abuso}.\\

\noindent
{\sc step 1.} Consider $x_0 \in \omega$ to be a Lebesgue point of $u$ and $\nabla_\alpha u,$ a point of approximate differentiability of $u,$ in a way that $u(x_0) \in \mathcal{M}, \nabla_\alpha u(x_0) \in [T_{u(x_0)}(\mathcal{M})]^2$ and such that the Radon-Nykod\'ym derivative of $\mu$ with respect to the Lebesgue measure $\mathcal{L}^2$ exists and it is finite. We observe that $\mathcal{L}^2-$almost every points $x_0 \in \omega$ satisfy these properties. We set 
\[
s_0 := u(x_0) \qquad \xi_0 := \nabla_\alpha u(x_0).
\]
We may assume that, up to a subsequence, there exists a nonnegative Radon measure $\lambda \in \mathcal{M}(\Omega)$ such that
\[
\left(\int_{-\frac{1}{2}}^{\frac{1}{2}}(1 + |\nabla_{h_n} u_n|)dx_3\right) \mathcal{L}^2 \mres\omega \stackrel{*}{\rightharpoonup} \lambda \qquad \textnormal{in $\mathcal{M}(\omega).$}
\]
Let us consider a sequence $(\rho_k)_k \rightarrow 0^+$ such that $Q'(x_0, 2 \rho_k) \subset \Omega$ and $\mu(\partial Q'(x_0, \rho_k)) = \lambda (\partial Q(x_0, \rho_k)) = 0$ for each $k \in \mathbb{N}.$ Since $\mu$ is the limit of $\mu_n$, we have that
\begin{equation}
\label{(5.9)}
\mu(Q'(x_0, \rho_k)) = \lim_{n \rightarrow + \infty} \int_{Q'(x_0, \rho_k)_1} f\left (\frac{x_\alpha}{h_n}, x_3, \nabla_{h_n} u_n \right ) \, dx.
\end{equation}
Since
\[
\tau_n := h_n \left [ \frac{x_0}{h_n}\right ] \in h_n \mathbb{Z}^2 \rightarrow x_0,
\]
given $r \in (0, 1/2)$ we have that $Q'(\tau_n, \rho_k) \subset Q'(x_0, r \rho_k)$ for sufficiently large values of $k;$ therefore we can define, for $x \in Q'(0, \rho_k)_1$
\begin{equation}
\label{(5.9)bis}
v_n(x) := u_n(x_\alpha + \tau_n, x_3).
\end{equation}
The continuity of the translation in $L^1$ allows us to obtain the following estimate, holding for $n \rightarrow + \infty$
\begin{eqnarray}
\int_{Q'(0, \rho_k)_1} |v_n(x) - u(x_\alpha + x_0)| \, dx &=& \int_{Q'(\tau_n, \rho_k)_1} |u_n(x) - u(x_\alpha + x_0 - \tau_n)| \, dx \label{(5.10)}\\
&\le & \int_{Q'(x_0, r \rho_k)_1} |u_n(x) - u(x_\alpha + x_0 - \tau_n)| \, dx \rightarrow 0. \nonumber
\end{eqnarray}
A change of variable in \eqref{(5.9)} together with the periodicity condition (H1) of $f(\cdot,x_3, \xi),$ the growth condition (H2), and \eqref{(5.9)bis} lead to
\begin{eqnarray}
\mu(Q'(x_0, \rho_k)) &=& \lim_{n \rightarrow + \infty} \int_{Q'(x_0 - \tau_n, \rho_k)_{,1}} f \left (\frac{x_\alpha + \tau_n}{h_n},x_3, \nabla_{h_n} u_n(x_\alpha + \tau_n, x_3) \right ) \, dx \nonumber\\
&=& \lim_{n \rightarrow + \infty} \int_{Q'(x_0 - \tau_n, \rho_k)_{,1}} f \left (\frac{x_\alpha}{h_n}, x_3, \nabla_{h_n} v_n\right ) \, dx \nonumber\\
&\ge& \limsup_{n \rightarrow + \infty} \int_{Q'(0, \rho_k)_{,1}} f \left (\frac{x_\alpha}{h_n}, x_3, \nabla_{h_n} v_n\right ) \, dx \nonumber\\
&& - \beta \limsup_{n \rightarrow + \infty} \int_{Q_1(\tau_n, \rho_k)_{,1} \setminus Q'(x_0, \rho_k)_{,1}} (1 + |\nabla_{h_n} u_n|) \, dx. \label{(5.11)}
\end{eqnarray}
The last term in \eqref{(5.11)} vanishes since, by the choice of $\rho_k$
\begin{eqnarray*}
&& \limsup_{n \rightarrow + \infty} \int_{Q'(\tau_n, \rho_k)_{,1} \setminus Q'(x_0, \rho_k)_{,1}} (1 + |\nabla_{h_n} u_n|) \, dx \\
&\le & \, \limsup_{r \rightarrow 1^+} \limsup_{n \rightarrow + \infty} \int_{Q'(x_0, r \rho_k)_{,1} \setminus Q'(x_0, \rho_k)_{,1}} (1 + |\nabla_{h_n} u_n|) \, dx\\
&\le & \limsup_{r \rightarrow 1^+} \lambda \left (\overline{Q'(x_0, r \rho_k)_{,1} \setminus Q'(x_0, \rho_k)_{,1}} \right )\\
&\le & \lambda (\partial Q'(x_0, \rho_k)_{,1}) = 0.
\end{eqnarray*}
Summing up, this entails
\[
\mu(Q'(x_0, \rho_k) \ge \,  \limsup_{n \rightarrow + \infty} \int_{Q'(0, \rho_k)_{,1}} f \left (\frac{x_\alpha}{h_n}, x_3, \nabla_{h_n} v_n\right ) \, dx,
\]
where $(v_n)_n \subset W^{1,1}(Q'(0, \rho_k)_{,1}; \mathcal{M})$ is such that $v_n \rightarrow u(x_0 + (\cdot)_\alpha)$ in $L^1(Q'(0, \rho_k)_{,1}; \mathbb{R}^3)$ thanks to \eqref{(5.10)}.
\\
At this point, let us consider, for every $n$ a sequence $(v_{n, j})_j \subset \mathcal{C}^{\infty}(\overline{Q'(0, \rho_k)_{,1}}; \mathbb{R}^3)$ (not necessarily taking values in $\mathcal{M}$!) such that $v_{n,j} \rightarrow v_n$ in $W^{1,1}(Q'(0, \rho_k)_{,1}; \mathbb{R}^3),$ $v_{n,j} \rightarrow v_n$ and $\nabla v_{n,j} \rightarrow \nabla v_n$ a.e. in $Q'(0, \rho_k)_{,1}$ as $j \rightarrow + \infty$ and consider the function $g$ introduced in \eqref{(2.6)}. It is possible to prove that
\begin{eqnarray*}
&& \lim_{j \rightarrow + \infty} \int_{Q'(0, \rho_k)_{,1}} g \left (\frac{x_\alpha}{h_n}, x_3, v_{n,j}, \nabla_{h_n} v_{n,j} \right ) \, dx \\
&=& \int_{Q'(0, \rho_k)_{,1}} g \left (\frac{x}{h_n}, x_3, v_{n}, \nabla_{h_n} v_{n} \right ) \, dx \\
&=& \int_{Q'(0, \rho_k)_{,1}} f \left (\frac{x_\alpha}{h_n}, x_3, \nabla_{h_n} v_{n} \right ) \, dx,
\end{eqnarray*}
therefore, by possibly passing to a diagonal sequence $\bar{v}_n := v_{n, j_n},$ such that $\bar{v}_n \rightarrow u(x_0 + (\cdot)_\alpha)$ in $L^1(Q'(0, \rho_k)_{,1}; \mathbb{R}^3),$ we may deduce
\[
\mu(Q'(x_0, \rho_k) \ge \,  \limsup_{n \rightarrow + \infty} \int_{Q'(0, \rho_k)_1} g \left (\frac{x_\alpha}{h_n}, x_3, \bar{v}_n, \nabla_{h_n} \bar{v}_n \right ) \, dx.
\]
If we perform a change of variable in the previous inequality and set
\[
w_{n,k}(x) := \frac{\bar{v}(\rho_k x_\alpha, x_3) - s_0}{\rho_k},
\]
we may deduce
\begin{eqnarray}
\frac{d \mu}{d \mathcal{L}^2}(x_0) &\ge &\, \limsup_{k \rightarrow + \infty} \limsup_{n \rightarrow + \infty} \int_Q g \left (\frac{\rho_k x_\alpha}{h_n}, x_3, \bar{v}_n(\rho_k x_\alpha, x_3), \nabla_{\frac{\rho_k}{h_n}} \bar{v}_n(\rho_k x_\alpha, x_3) \right ) \, dx \nonumber\\ 
&= &\, \limsup_{k \rightarrow + \infty} \limsup_{n \rightarrow + \infty} \int_Q g \left (\frac{\rho_k x_\alpha}{h_n}, x_3, s_0 + \rho_k w_{n, k}, \nabla_{\frac{\rho_k}{h_n}} w_{n,k} \right ) \, dx. \label{(5.13)}
\end{eqnarray}
Since $x_0$ is a point of approximate differentibility of $u$ and $\bar{v}_n \rightarrow u(x_0 + (\cdot)_\alpha)$ in $L^1(Q'(0, \rho_k)_{,1}; \mathbb{R}^3),$ we have
\begin{equation}
\label{(5.14)}
\lim_{k \rightarrow + \infty} \lim_{n \rightarrow + \infty} \int_Q [w_{n,k}(x) - \xi_0 x_\alpha] \, dx = \lim_{k \rightarrow + \infty} \int_{Q'(x_0, \rho_k)} \frac{|u(x_\alpha) - s_0 - \xi_0 (x_\alpha - x_0)|}{\rho_k^{3}} \, dx_\alpha = 0.
\end{equation}
At this point, \eqref{(5.13)} and \eqref{(5.14)} allow us to take a diagonal sequence $h_{n_k} < \rho^2_k$ in such a way that $w_k := w_{n_k,k} \rightarrow w_0$ in $L^1(Q; \mathbb{R}^d)$ with $w_0(x) := \xi_0 x_\alpha$ and
\begin{equation*}
\label{(5.15)}
\frac{d \mu}{d \mathcal{L}^2} (x_0) \ge \, \limsup_{k \rightarrow + \infty} \int_{Q}g \left (\frac{x}{\delta_k},x_3, s_0 + \rho_k w_k, \nabla_{\delta_k} w_k \right ) \, dx,
\end{equation*}
where $\delta_k := \frac{h_{n_k}}{\rho_k} \rightarrow 0.$ 
\\
\\ 
{\sc step 2.} This second step can be performed following the lines of \cite[Lemma 5.2]{BM}, which, in turn, rely on \cite[Theorem 2.1]{FM}. In fact we observe that the rescaled gradient can be seen as the gradient of a composed function. Moreover, the dependence on $x_3$ and the 'homogenization procedure' in the first two variables and the dimension reduction with respect to the third one do not influence the strategy and the estimates, which rely only on the growth of $f$.

Thus we can show the existence of a sequence $(\overline{w}_k)_k \subset W^{1, \infty}(Q; \mathbb{R}^3)$ such that $\overline{w}_k \rightarrow w_0$ in $L^{\infty}(Q; \mathbb{R}^3),$ $\overline{w}_k$ is uniformly bounded in $W^{1,1}(Q; \mathbb{R}^3)$ and $\nabla_{\delta_k}\overline{w}_k$ is bounded in $L^1(Q;\mathbb R^{3\times 3})$.

\begin{equation}
\label{(5.16)}
\frac{d \mu}{d \mathcal{L}^2}(x_0) \ge \, \limsup_{k \rightarrow + \infty} \int_Q g \left ( \frac{x_\alpha}{\delta_k}, x_3, s_0 + \rho_k \overline{w}_k, \nabla_{\delta_k} \overline{w}_k\right )dx.
\end{equation}

{\sc step 3:} Thanks to the fact that $(\overline{w}_k)_k\subset{L^{\infty}(Q; \mathbb{R}^3)}$ and  $(\nabla_{\delta_k} \overline{w}_k)_k\subset{L^{\infty}(Q; \mathbb{R}^{3 \times 3})}$ are uniformly bounded in $L^1$, \eqref{(2.8)} gives
\[
\lim_{k \rightarrow + \infty} \int_Q \left | g \left ( \frac{x_\alpha}{\delta_k}, x_3, s_0 + \rho_k \overline{w}_k, \nabla_{\delta_k} \overline{w}_k\right ) - g \left ( \frac{x_\alpha}{\delta_k}, x_3, s_0, \nabla_{\delta_k} \overline{w}_k\right )\right | \, dx = 0,
\]
which in turn entails, together with \eqref{(5.16)}
\[
\frac{d \mu}{d \mathcal{L}^2}(x_0) \ge \, \limsup_{k \rightarrow + \infty} \int_Q g \left ( \frac{x_\alpha}{\delta_k}, x_3, s_0, \nabla_{\delta_k} \overline{w}_k\right )dx.
\]
Now we claim that
\begin{equation}
\label{caratt_limsup_puno}
    \limsup_{k \rightarrow + \infty} \int_Q g \left ( \frac{x_\alpha}{\delta_k}, x_3, s_0, \nabla_{\delta_k} \overline{w}_k\right )dx=g_{\rm hom}^0(s_0, \xi_0), 
\end{equation}
with the latter defined in \eqref{f0hom}. 
This concludes the proof since by \eqref{(2.3)} we have
\[
\frac{d \mu}{d \mathcal{L}^2}(x_0)\ge \, g_{\rm hom}^0(s_0, \xi_0) = T f_{\rm hom}^0(s_0, \xi_0).
\]
The proof of \eqref{caratt_limsup_puno} is the following. 
Consider the family of integral functionals 
\begin{equation*}
   (w,A)\in W^{1,1}(Q';\R^3)\times\mathcal{A}(Q')\mapsto G^{\delta_k}(w,A):=\int_{A}\int_{-\frac{1}{2}}^{\frac{1}{2}} g \left ( \frac{x_\alpha}{\delta_k}, x_3, s_0, \nabla_{\delta_k} w\right )dx_3dx_\alpha,
\end{equation*}
for some vanishing sequence $(\delta_k)_k$. Let $G:BV(Q',\R^3)\times \mathcal{A}(Q')\to \R$ be 
\begin{equation*}
    G(w, A):=\inf\left\{\liminf_{k\to+\infty}G^{\delta_k}(w_k,A) : w_k \to w\,\text{ in $L^1(A;\mathcal M)$}\right\}.
\end{equation*}
It follows that $G$ is $L^1$ lower semicontinuous. Moreover, since $g$ satisfies hypothesis (H1) to (H4), then by the same argument as Lemma \ref{LocalizationBV} we get that $G$ is the restriction on $\mathcal{A}(Q')$ of a positive Radon measure. Furthermore, by Lemma \ref{extension}-(ii) it easy to see that
\begin{equation*}
    \alpha'|D_\alpha u|(A)\le G(w,A)\le \beta'(\mathcal{L}^2(A)+|D_\alpha u|(A)),
\end{equation*}
for every $(w,A) \in BV(Q';\R^3)\times\mathcal{A}(Q')$. Standard arguments ensure that $G(w+c,A)=G(w,A)$ for any $c\in\R^3$ and $(w,A) \in BV(Q';\R^3)\times\mathcal{A}(Q')$. 


By Lemma \ref{extension}-(i) and classical arguments in homogenization theory, it is possible to deduce that
 \[  
     G(\tau_yu,y+A)=G(u,A),
     \]
    for any $A\in\mathcal{A}(Q')$, $y\in \R^2$ such that $y+A\subset Q'$, $u\in BV(Q'; \mathcal{M})$, and with $\tau_yw(x):=w(x-y)$.
    Combining the last to identities we get that
    \begin{equation*}
        G(\tau_yw+c,y+A)=G(w,A).
    \end{equation*}

    Now we apply \cite[Theorem 3.12, and Remark 3.8 (1) and (2)]{BFM} and we obtain that
    \begin{align*}
        G(w,A)=\int_A \varphi^b&(s_0,\nabla_\alpha w)dx_\alpha+\int_{S_w\cap A} \varphi^j(s_0,(w^--w^+)\otimes \nu_w)d\mathcal{H}^1(x_\alpha)\\
        &+ \int_A \varphi^{b, \infty}\left(s_0, \frac{dD_\alpha^c w}{d|D_\alpha^c w|}\right)d|D_\alpha^c w|,
    \end{align*}
    for every $(w,A) \in BV(Q';\R^3)\times\mathcal{A}(Q')$. In what follow, we prove that, for every ${\xi_\alpha} \in \R^{3\times 2}$,
    \begin{align*}
        \varphi^b(s_0, {\xi_\alpha})&= \lim_{t \rightarrow + \infty} \inf_{\varphi} \left \{ \media_{(tQ')_{,1}} g(y, s_0, {\xi_\alpha} + \nabla_\alpha \varphi(y)|\nabla_3\varphi(y)) \, dy: \right. \\
& \left. \varphi \in W^{1, \infty}((tQ')_{,1}; \mathbb{R}^3),\, \varphi(x_{\alpha}, x_3) = 0\,\,\, \textnormal{for every $(x_{\alpha}, x_3) \in \partial (tQ') \times \left(-\frac{1}{2},\frac{1}{2}\right)$} \right \}\\
&=:g^0_{\rm hom}(s_0, {\xi_\alpha}), 
    \end{align*}
 in \eqref{f0hom}.
 The proof of this identity follows the same argument as \cite[pages 1390-1391]{BFF}, and reasoning as in \cite[Proposition 2.4]{BM84} we can substitute the space $W^{1,p}$ with $W^{1,\infty}$.\\

\noindent
$\bullet$ \textsc{proof of \eqref{BM(6.2)}.}
For this proof we follow the strategy of \cite[section 5.1]{AEL07}. Let $G:BV(\omega, \R^3)\to \R$ be defined as 
 \begin{equation*}
     G(u):= \int_{\omega}g^0_{hom}(u,\nabla_\alpha u)dx_\alpha.
 \end{equation*}
By \eqref{BM(6.1)} we have that for every $u\in BV(\omega,\mathcal{M})$ it holds
 \begin{equation*}
     I^0(u)\ge G(u).
 \end{equation*}
So, in particular, it holds
\begin{equation*}
     I^0(u)\ge \overline{G}(u), \quad u\in BV(\omega,\mathcal{M}),
 \end{equation*}
with
\begin{equation*}
    \overline{G}(u):=\inf\left\{\liminf_{n\to \infty}G(u_n): u_n \rightharpoonup^* u \text{ in }BV(\omega,\R^3)\right\}.
\end{equation*}
For $\e>0$, consider now the function $\tilde g(s,\xi_\alpha):\R^3\times \R^{3\times 2}\to \R$ defined as
\begin{equation*}
    \tilde g_\e(s,\xi_\alpha):=g^0_{hom}(x,\xi_\alpha) +\e|\xi_\alpha|.
\end{equation*}
Fixed $u\in BV(\omega,\mathcal{M})$ and a sequence $(u_n)_n\subset BV(\omega,\mathcal{M})$ such that $ u_n \rightharpoonup^* u$ in $BV(\omega,\mathcal{M})$, then in particular $u_n \rightharpoonup^* u$ in $BV(\omega,\R^{3})$ and
\begin{align*}
    \liminf_{n\to \infty} G(u_n)&\ge \liminf_{n\to \infty} \int_{\omega}\tilde{g}_\e(u_n,\nabla_\alpha u_n)dx_\alpha-\e\sup_{n}||\nabla_\alpha u_n||_{L^1(\omega,\R^{3\times 2})}\\
    & \ge \int_\omega \tilde{g}_\e(u,\nabla_\alpha u)dx_\alpha + \int_\omega \tilde{g}^{\infty}_\e\left(u,D^c_\alpha \frac{d D_\alpha^c u}{d |D_\alpha^c u|}\right)d |D_\alpha^c u| -\e\sup_{n}||\nabla_\alpha u_n||_{L^1(\omega,\R^{3\times 2})},
\end{align*}
where in the last inequality we used \cite[Theorem 2.16]{FM93}. Passing to the limit for $\e \to 0$, then considering that $\tilde{g}^{\infty}_\e(s,\xi_\alpha)=g^{0,\infty}_{hom}(x,\xi_\alpha) +\e$ we get
\begin{align*}
    I^0(u)&\ge \int_\omega g_{hom}^0(u,\nabla_\alpha u)dx_\alpha + \int_\omega g_{hom}^{0,\infty}\left(u,\frac{d D_\alpha^c u}{d |D_\alpha^c u|}\right)d |D_\alpha^c u|\\
    & = \int_\omega Tf_{hom}^0(u,\nabla_\alpha u)dx_\alpha + \int_\omega Tf_{hom}^{0,\infty}\left(u,\frac{d D_\alpha^c u}{d |D_\alpha^c u|}\right)d |D_\alpha^c u|
\end{align*}
 wich proves \eqref{BM(6.2)}, passing to densities via Besicovitch's differentiation.
\vspace{2mm}

$\bullet$ \textsc{proof of \eqref{BM(6.3)}.} In this step we use a strategy similar to the one employed in \cite[Lemma 6.1]{BMbv}. In particular, we will still rely on the blow up method together with the projection argument in \cite[Proposition 2.1]{BMbv} which we already used in the localization (see Lemma \ref{LocalizationBV}).
\\
\\
{\sc step 1.} Let $x_0 \in S_u$ be such that
\begin{equation}
\label{BM(6.25)}
\lim_{\rho \rightarrow 0^+} \media_{Q^{\pm}_{\nu_u(x_0)}(x_0, \rho)} |u(x_\alpha) - u^{\pm}(x_0)| \, dx_\alpha = 0,
\end{equation}
where $u^{\pm}(x_0) \in \mathcal{M},$
\begin{equation}
\label{BM(6.26)}
\lim_{\rho \rightarrow 0^+} \frac{\mathcal{H}^{1}(S_u \cap Q_{\nu_u(x_0)}(x_0, \rho))}{\rho} = 1,
\end{equation}
and such that the Radon-Nikod\'ym derivative of $\mu$ with respect to $\mathcal{H}^{1} \mres S_u$ exists and it is finite. By \cite[Lemma 2.1, Theorem 3.78, Theorem 2.83 (i)]{AFP} with cubes instead of balls, it turns out that $\mathcal{H}^{1}$-a.e. $x_0 \in S_u$ satisfy these properties. Now let us set $s_0^{\pm} := u^{\pm}(x_0)$ and $ \nu_0 := \nu_u(x_0).$ Given a sequence $(u_n)_n\subset W^{1,1}(\Omega,\R^3)$ such that $\liminf_{n\to \infty} I^{h_n}(u_n)\le c$, then, possibly extracting a further subsequence, we may assume that 
\[
\left(\int_{-\frac{1}{2}}^{\frac{1}{2}}(1 + |\nabla_{h_n} u_n|)dx_3 \right)\mathcal{L}^2 \mres \omega \stackrel{*}{\rightharpoonup} \lambda \qquad \textnormal{in $\mathcal{M}(\omega)$},
\]
for some nonnegative Radon measure $\lambda \in \mathcal{M}(\Omega).$ At this point, let us consider a sequence $\rho_k \rightarrow 0^+$ satisfying 
\[
\mu(\partial Q'_{\nu_0}(x_0, \rho_k)) = \lambda (\partial Q'_{\nu_0}(x_0, \rho_k)) = 0,
\]
for each $k \in \mathbb{N}.$ By \eqref{BM(6.26)}, we have
\begin{eqnarray*}
&& \frac{d \mu}{d \mathcal{H}^{1} \mres S_u}(x_0) =\lim_{k \rightarrow + \infty} \frac{\mu(Q'_{\nu_0}(x_0, \rho_k))}{\mathcal{H}^1(S_u \cap Q'_{\nu_0}(x_0, \rho_k))} \\ 
&=& \lim_{k \rightarrow + \infty}  \frac{\mu(Q'_{\nu_0}(x_0, \rho_k))}{\rho_k} \\
&=& \lim_{k \rightarrow + \infty} \lim_{n \rightarrow + \infty} \frac{1}{\rho_k} \int_{Q'_{\nu_0}(x_0, \rho_k)_{,1}} f \left (\frac{x_\alpha}{h_n},x_3, \nabla_{h_n} u_n \right ) \, dx.
\end{eqnarray*} 
By exploiting \cite[Theorem 2.1]{BMbv}, it is possible to assume, without loss of generality, that $u_n \in \mathcal{D}(\Omega; \mathcal{M})$ (see Theorem \ref{label}) for each $n \in \mathbb{N}.$ Therefore, arguing as in the proof of \eqref{BM(6.1)} (with $Q'_{\nu_0}(x_0, \rho_k)$ instead of $Q'(x_0, \rho_k)$) we get a sequence $(v_n)_n \subset \mathcal{D}((Q'_{\nu_0}(0, \rho_k))_1; \mathcal{M})$ such that $v_n \rightarrow u(x_0 + \cdot)$ in $L^1((Q'_{\nu_0}(0, \rho_k))_1; \mathbb{R}^3)$ as $n \rightarrow + \infty$ and
\[
\frac{d \mu}{d \mathcal{H}^{1} \mres S_u}(x_0) \ge \, \limsup_{k \rightarrow + \infty} \limsup_{n \rightarrow + \infty}  \frac{1}{\rho_k} \int_{(Q'_{\nu_0}(0, \rho_k))_{,1}} f \left (\frac{x_\alpha}{h_n},x_3, \nabla_{h_n} v_n \right ) \, dx.
\]
It is worth mentioning that the construction process to get $v_n$ starting from $u_n$ does not affect the manifold constraint.
\\
By means of a change of variable, setting $w_{n,k} = v_n(\rho_k x_\alpha, x_3)$ we get
\[
\frac{d \mu}{d \mathcal{H}^{1} \mres S_u}(x_0) \ge \, \limsup_{k \rightarrow + \infty} \limsup_{n \rightarrow + \infty}  \rho_k \int_{Q'_{\nu_0,1}} f \left (\frac{\rho_k \, x_\alpha}{h_n},x_3, \frac{1}{\rho_k} \nabla_{\alpha} w_{n,k}\Bigg| \frac{1}{h_n}\nabla_{3} w_{n,k} \right ) \, dx.
\]
At this point, let us define
\[
u_0(x) := 
\left \{
\begin{array}{lll}
\!\!\!\!\!\! & s_0^+ \qquad & \textnormal{if $x \cdot \nu_0 > 0$}\\
\!\!\!\!\!\! & s_0^- \qquad & \textnormal{if $x \cdot \nu_0 \le 0$}
\end{array}
\right.
\]
By using \eqref{BM(6.25)}, we obtain
\[
\lim_{k \rightarrow + \infty} \lim_{n \rightarrow + \infty} \int_{Q'_{\nu_0,1}} |w_{n,k} - u_0| \, dx = 0.
\]
Exploiting a standard diagonal argument, we are able to find a sequence $n_k \rightarrow + \infty$ such that $(h_k)_k:=(h_{n_k})_k$, $\delta_k := h_{n_k}/\rho_k \rightarrow 0,$ $w_k := w_{n_k,k} \in \mathcal{D}(Q_{\nu_0,1}; \mathbb{R}^3)$ converges to $u_0 \in L^1(Q_{\nu_0,1}; \mathbb{R}^3)$ and
\begin{align}
    \label{BM(6.27)}
    \frac{d \mu}{d \mathcal{H}^{1} \mres S_u}(x_0) &\ge \, \limsup_{k \rightarrow + \infty}  \rho_k \int_{Q'_{\nu_0,1}} f \left (\frac{x_\alpha}{\delta_k},x_3, \frac{1}{\rho_k}\nabla_{\alpha} w_{k}\Bigg| \frac{1}{h_k}\nabla_{3} w_{k} \right ) \, dx\nonumber\\
    &=\limsup_{k \rightarrow + \infty}  \rho_k \int_{Q'_{\nu_0,1}} f \left (\frac{x_\alpha}{\delta_k},x_3, \frac{1}{\rho_k}\nabla_{\delta_k} w_{k}\right ) \, dx
\end{align}
Exploiting (H4), the positive 1-homogeneity of $f^{\infty}(y, \cdot)$ and H\"older's inequality (being $0 < q < 1$)
\begin{eqnarray}
&& \int_{Q'_{\nu_0,1}} \left | \rho_k f \left ( \frac{x_\alpha}{\delta_k},x_3, \frac{1}{\rho_k}\nabla_{\delta_k} w_{k}\right ) - f^{\infty} \left ( \frac{x_\alpha}{\delta_k},x_3, \nabla_{\delta_k} w_{k}\right )\right | \, dx \nonumber \\
&=& \int_{Q'_{\nu_0,1}} \left | \rho_k f \left ( \frac{x_\alpha}{\delta_k},x_3,\frac{1}{\rho_k}\nabla_{\delta_k} w_{k}\right ) - \rho_k f^{\infty} \left ( \frac{x_\alpha}{\delta_k},x_3, \frac{1}{\rho_k}\nabla_{\delta_k} w_{k}\right )\right | \, dx\nonumber\\
&\le & \, C \, \rho_k \int_{Q'_{\nu_0,1}} (1 + \rho_k^{q-1} |\nabla_{\delta_k} w_k|^{1 - q} ) \, dx \nonumber\\
& \le & C \left ( \rho_k + \rho_k^q \|\nabla_{\delta_k} w_k\|^{1 - q}_{L^1(Q'_{\nu_0,1}; \mathbb{R}^{3 \times 3})}\right ).\label{BM(6.28)}
\end{eqnarray}
On the other hand, from \eqref{BM(6.27)} and the coercivity condition (H2), it follows that the sequence $(\nabla_{\delta_k} w_k)_k$ is uniformly bounded in $L^1(Q'_{\nu_0,1}; \mathbb{R}^{3 \times 2}).$ Putting together \eqref{BM(6.27)} and \eqref{BM(6.28)}, we obtain
\begin{equation}
\label{BM(6.29)}
\frac{d \mu}{d \mathcal{H}^{1} \mres S_u}(x_0) \ge \, \limsup_{k \rightarrow + \infty}  \int_{Q'_{\nu_0,1}} f^{\infty} \left (\frac{x_\alpha}{\delta_k},x_3, \nabla_{\delta_k} w_{k} \right ) \, dx.
\end{equation}
{\sc step 2.} In this second and last step it remains to modify the value of $w_k$ on a neighbourhood of $\partial Q'_{\nu_0,}$ with the aim to get an admissible test function for the surface energy density. The computations follow similarly as in \cite[Lemma 5.2]{AEL07}.
\\
We consider $\gamma \in \mathcal{G}(s_0^+, s_0^-)$ (see \eqref{geodesic} and \eqref{x_nu}) and set
\[
\psi_k(x) := \gamma \left ( \frac{x_{\nu_0}}{\delta_k}\right ).
\]
Using a De Giorgi type slicing argument, we shall modify $w_k$ in order to get a function which matches $\psi_k$ to $\partial Q'_{\nu_0,1}.$ In order to reach this purpose, we introduce
\[
\begin{array}{ll}
& r_k := \|w_k - \psi_k\|^{1/2}_{L^1(Q'_{\nu_0,1}; \mathbb{R}^3)}\\[3mm]
& \displaystyle M_k := k \, \left [1 + \|w_k\|_{W^{1,1}(Q'_{\nu_0,1}; \mathbb{R}^{3})} + \|\psi_k\|_{W^{1,1}(Q'_{\nu_0,1}; \mathbb{R}^{3})} \right ]\\[3mm]
& \displaystyle \ell_k := \frac{r_k}{M_k}
\end{array}
\]
As long as $\psi_k$ and $w_k$ converge to $u_0$ in $L^1(Q'_{\nu_0,1}; \mathbb{R}^{d}),$ we have that $r_k \rightarrow 0$ and it is possible to assume that $0 < r_k < 1.$ We set
\[
Q_k^{(i)} := ((1 - r_k + i \ell_k) Q'_{\nu_0})_{,1} \qquad \textnormal{for $i = 0, \dots, M_k$.}
\]
For every $i \in \{1, \dots, M_k\},$ we consider a cut-off functions $\varphi_k^{(i)} \in \mathcal{C}^{\infty}_c((1 - r_k + i \ell_k) Q'_{\nu_0}; [0,1])$ and we extend $\varphi_k^{(i)}$ to $Q_k^{(i)}$ setting $\varphi_k^{(i)}$ constant in the third variable. For simplicity of notation we still denote $\varphi_k^{(i)}$ this extension. By construction $\varphi_k^{(i)} = 1$ on $Q_k^{(i-1)}$ and $|\nabla_{\delta_k} \varphi_k^{(i)}| \le \, c/\ell_k.$ We further define
\[
z_k^{(i)} := \varphi_k^{(i)} w_k + (1 - \varphi_k^{(i)}) \psi_k \in W^{1,1}(Q_{\nu_0,1}; \mathbb{R}^3),
\]
in such a way that
\[
z_k^{(i)} = w_k \qquad \textnormal{in $Q_{k}^{(i-1)} \,\,\,$ and } \qquad z_k^{(i)} = \psi_k \qquad \textnormal{in $Q'_{\nu_0,1} \setminus Q_{k}^{(i)}.$}
\]
Since $z_k^{(i)}$ is smooth outside a finite union of sets contained in some $1-$dimensional submanifolds and $z_k^{(i)}(x) \in co(\mathcal{M})$ for a.e. $x \in Q'_{\nu_0,1},$ it is possible to apply a similar argument to \cite[Proposition 2.1]{BMbv} to obtain new functions $\hat{z}_k^{(i)} \in W^{1,1}(Q'_{\nu_0,1}; \mathcal{M})$ such that
\[
\hat{z}_k^{(i)} = z_k^{(i)} \,\,\, \textnormal{on} \,\,\, (Q'_{\nu_0,1} \setminus Q_k^{(i)}) \cup Q_k^{(i-1)}
\]
and
\begin{eqnarray*}
\int_{Q_k^{(i)} \setminus Q_k^{(i-1)}} |\nabla_{\delta_k} \hat{z}_k^{(i)}| \, dx &\le &\, C_* \, \int_{Q_k^{(i)} \setminus Q_k^{(i-1)}} |\nabla_{\delta_k} z_k^{(i)}| \,dx \\
&\le & \, C_* \int_{Q_k^{(i)} \setminus Q_k^{(i-1)}} \left (|\nabla_{\delta_k} w_k| + |\nabla_{\delta_k} \psi_k| + \frac{1}{\ell_k} |w_k - \psi_k| \right ) \, dx.
\end{eqnarray*}
In particular $\hat{z}_k^{(i)} \in \mathcal{B}_{\delta_k}(s_0^+, s_0^-, \nu_0)$ and by the growth condition (H2)
\begin{eqnarray*}
&& \int_{Q'_{\nu_0,1}} f^{\infty} \left (\frac{x_\alpha}{\delta_k},x_3, \nabla_{\delta_k} \hat{z}_k^{(i)} \right ) \, dx \le  \, \int_{Q'_{\nu_0,1}} f^{\infty} \left (\frac{x_\alpha}{\delta_k},x_3, \nabla_{\delta_k} w_k \right ) \, dx\\
&&+ C \, \int_{Q'_{\nu_0,1} \setminus Q_k^{(i)}} |\nabla_{\delta_k} \psi_k| \, dx + C \int_{Q_k^{(i)} \setminus Q_k^{(i-1)}}  \left (|\nabla_{\delta_k} w_k| + |\nabla_{\delta_k} \psi_k| + \frac{1}{\ell_k} |w_k - \psi_k|  \right ) \, dx.
\end{eqnarray*}
Summing up over all $i = 1, \dots, M_k$ and dividing by $M_k,$ we get that
\begin{eqnarray*}
&& \frac{1}{M_k} \sum_{i=1}^{M_k} \int_{Q'_{\nu_0,1}} f^{\infty} \left (\frac{x_\alpha}{\delta_k},x_3, \nabla_{\delta_k} \hat{z}_k^{(i)} \right ) \, dx \le  \, \int_{Q'_{\nu_0,1}} f^{\infty} \left (\frac{x_\alpha}{\delta_k},x_3, \nabla_{\delta_k} w_k \right ) \, dx\\
&&+ C \, \int_{Q'_{\nu_0,1} \setminus Q_k^{(0)}} |\nabla_{\delta_k} \psi_k| \, dx + \frac{C}{k} + C \, \|w_k - \psi_k\|^{1/2}_{L^1(Q'_{\nu_0,1}; \mathbb{R}^d)}.
\end{eqnarray*}
Since
\[
\int_{Q'_{\nu_0,1} \setminus Q_k^{(0)}} |\nabla \psi_k| \, dx \le \, {\bf d}_{\mathcal{M}} (s_0^+, s_0^-) \mathcal{H}^{2} \left ( (Q'_{\nu_0,1} \setminus Q_k^{(0)}) \cap \{x \cdot \nu = 0\}\right ) \rightarrow 0
\]
as $k \rightarrow + \infty,$ there exists a sequence $\eta_k \rightarrow 0^+$ such that
\[
\frac{1}{M_k} \sum_{i=1}^{M_k} \int_{Q'_{\nu_0,1}} f^{\infty} \left (\frac{x_\alpha}{\delta_k},x_3, \nabla_{\delta_k} \hat{z}_k^{(i)} \right ) \, dx \le  \, \int_{Q'_{\nu_0,1}} f^{\infty} \left (\frac{x_\alpha}{\delta_k},x_3, \nabla_{\delta_k} w_k \right ) \, dx + \eta_k.
\]
Hence, for each $k \in \mathbb{N}$ it is possible to find some index $i_k \in \{1, \dots M_k\}$ satisfying 
\begin{equation}
\label{BM(6.30)}
\int_{Q'_{\nu_0,1}} f^{\infty} \left (\frac{x_\alpha}{\delta_k},x_3, \nabla_{\delta_k} \hat{z}_k^{(i_k)} \right ) \, dx \le  \, \int_{Q'_{\nu_0,1}} f^{\infty} \left (\frac{x_\alpha}{\delta_k},x_3, \nabla_{\delta_k} w_k \right ) \, dx + \eta_k.
\end{equation}
Putting together \eqref{BM(6.29)} and \eqref{BM(6.30)}, we finally obtain
\[
\frac{d \mu}{d \mathcal{H}^{1} \mres S_u}(x_0) \ge \, \limsup_{k \rightarrow + \infty}  \int_{Q_{\nu_0}} f^{\infty} \left (\frac{x_\alpha}{\delta_k},x_3, \nabla_{\delta_k} \hat{z}_{k}^{(i_k)} \right ) \, dx.
\]
Since $\hat{z}_k^{(i_k)} \in \mathcal{B}(s_0^+, s_0^-, \nu_0)$ (see \eqref{B set} for definition), we infer by Proposition \ref{BMBV3.2_3.3}  that
\[
\frac{d \mu}{d \mathcal{H}^{1} \mres S_u}(x_0) \ge \, \theta(s_0^+, s_0^-, \nu_0)
\]
which finally concludes the proof.
\end{proof}

\noindent
From the proofs of Theorem \ref{BVcase}, it should be clear that both results hold in dimensions $N \ge 3$. In particular, we can assume $N,k,d\in \N$ with the $1\le k <N$, $\omega\subset \R^{N-k}$, $\Omega:= \omega \times (-\frac{1}{2}, \frac{1}{2})^k$, and $\mathcal{M}$ submanifold of $\R^d$.

\color{black}

    \section*{Acknowledgment}
The authors gratefully acknowledge support from INdAM GNAMPA.
	A.T. and L. L. \,have been partially supported through the INdAM-GNAMPA 2025 project ``Minimal surfaces: the Plateau problem and behind'' cup E5324001950001 and the project: Geometric-Analytic Methods for PDEs and Applications (GAMPA) , ref. 2022SLTHCE – cup E53D2300588 0006 - funded by European Union - Next Generation EU within the PRIN 2022 program (D.D. 104 - 02/02/2022 Ministero dell’Università e della Ricerca). This manuscript reflects only the authors’ views and opinions and the Ministry cannot be considered responsible for them. 
    E. Z. has been supported by PRIN 2022: Mathematical Modelling of Heterogeneous Systems (MMHS)
- Next Generation EU CUP B53D23009360006 and by INdAM GNAMPA Project 2024 'Composite materials and microstructures'.

\end{document}